\documentclass{amsart}
\usepackage{amssymb,amsfonts,dsfont}
\usepackage{graphicx}
\usepackage{amsmath}
\usepackage{latexsym}
\usepackage{color}
\usepackage{comment}


\newtheorem{theorem}{Theorem}[section]
\theoremstyle{plain}
\newtheorem{lemma}[theorem]{Lemma}
\newtheorem{proposition}[theorem]{Proposition}

\newtheorem{corollary}[theorem]{Corollary}

\theoremstyle{remark}
\newtheorem{definition}[theorem]{Definition}
\newtheorem{example}[theorem]{Example}

\newtheorem{remark}[theorem]{Remark}

\numberwithin{equation}{section}

\headheight=12pt

\DeclareMathOperator{\supp}{supp}

\newcommand{\M}{\mathcal{M}}

\newcommand{\Complex}{\mathbb{C}}

\newcommand{\abs}[1]{\left\vert#1\right\vert}

\newcommand{\norme}[1]{\|#1\|_E}
\newcommand{\normcomm}[1]{\|#1\|_{E(\textit{M},\tau)}}
\newcommand{\Ker}[1]{\text{Ker}\,#1}
\newcommand{\Kerp}[1]{\text{Ker}^\perp#1}
\newcommand{\nonsp}{E(\mathcal{M},\tau)}
\newcommand{\Rep}[1]{\text{Re}\,#1}

\newcommand{\Imp}[1]{\text{Im}\,#1}

\def\underset#1\to#2{\mathop{#2}\limits_{#1}{ }}
\def\overset#1\to#2{\mathop{#2}\limits^{#1}{ }}

\newcommand{\one}{\textup{\textbf{1}}}
\newcommand{\tauone}{\tau(\textup{\textbf{1}})}
\newcommand{\Mtau}{\left(\mathcal{M},\tau\right)}

\begin{document}
\title[$k$-extreme points in symmetric spaces of measurable operators]
{$k$-extreme points in symmetric spaces of measurable operators}

\author{M.M. Czerwi\'nska}
\address{ Department of Mathematics and Statistics, University of North Florida, Jacksonville, FL 32224} \email{m.czerwinska@unf.edu}

\author
{A. Kami\'nska}
\address{Department of Mathematical Sciences, The University of
Memphis, Memphis, TN 38152} \email{kaminska@memphis.edu}

\thanks {\emph{2010 subject classification}\ {46L52, 46L51, 46E30, 46B20, 46B28, 47L05,
47L20}}

\keywords{Symmetric spaces of measurable operators, symmetric (rearrangement invariant) Banach function spaces, $k$-extreme points, $k$-rotundity, orbits of functions, Marcinkiewicz spaces}

\begin{abstract}
Let $\M$ be a semifinite von Neumann algebra with a faithful, normal, semifinite trace $\tau$ and $E$ be a strongly symmetric Banach function space on $[0,\tauone)$. We show that an operator $x$ in the unit sphere of $\nonsp$ is $k$-extreme, $k\in\mathbb N$, whenever its singular value function $\mu(x)$ is $k$-extreme and one of the following conditions hold (i) $\mu(\infty,x)=\lim_{t\to\infty}\mu(t,x)=0$ or (ii) $n(x)\M n(x^*)=0$ and $\abs{x}\geq \mu(\infty,x)s(x)$, where $n(x)$ and $s(x)$ are null and support projections of $x$, respectively. The converse is true whenever $\M$ is non-atomic. The global $k$-rotundity property follows, that is if $\M$ is non-atomic then $E$ is $k$-rotund if and only if $\nonsp$ is $k$-rotund.
As a consequence of the noncommutive results we obtain that $f$ is a $k$-extreme point of the unit ball of the strongly symmetric function space $E$ if and only if its decreasing rearrangement $\mu(f)$  is $k$-extreme and $\abs{f}\geq \mu(\infty,f)$. We conclude with the corollary on orbits $\Omega(g)$ and $\Omega'(g)$. We get that $f$ is a $k$-extreme point of the orbit $\Omega(g)$, $g\in L_1+L_{\infty}$, or $\Omega'(g)$, $g\in L_1[0,\alpha)$, $\alpha<\infty$, if and only if $\mu(f)=\mu(g)$ and $\abs{f}\geq \mu(\infty,f)$. From this we obtain a characterization of $k$-extreme points in Marcinkiewicz spaces.
\end{abstract}

\maketitle

\renewcommand{\abstractname}{Acknowledgement}
\begin{abstract}
The final publication is available at Springer via\\ http://dx.doi.org/10.1007/s00020-014-2206-1
\end{abstract}
\section{Preliminaries}

The purpose of the article is to characterize $k$-extreme elements of the symmetric spaces of measurable operators  in terms of their singular value functions. There have been many results  in the noncommutative spaces $\nonsp$, in particular in the noncommutative $L_p$ spaces, aiming to reduce the study on their geometric properties to the commutative settings. The first research in this direction can be attributed to Arazy \cite{A}, who described extreme points in the unitary matrix spaces. In the symmetric spaces of measurable operators extreme points were characterized by Chillin, Krygin and Sukochev in \cite{CKSextreme}.

The theory of noncommutative spaces $\nonsp$ has  attracted many mathematicians who continue to research their geometric properties. There is a long list of papers relating geometric properties of the operators and their  decreasing rearrangements, see for example results on local uniform rotundity and uniform rotundity \cite{CKSlur}, complex extreme points \cite{moj}, Kadets-Klee property   \cite{CDS}, or smooth points in \cite{CKK1}.  Worth mentioning is also work in \cite{R} investigating  local geometrical aspects of subspaces of noncommutative $L_p$ spaces associated with general von Neumann algebras.

Let $\M$ be a semifinite von Neumann algebra on the Hilbert space $H$ with given faithful normal semifinite trace $\tau$. The symbol $\one$ will be used to denote the identity in $\M$ and $\mathcal{P}(\M)$ will stand for the set of all projections in $\M$. It is known that  $\mathcal{P}(\M)$ is a complete lattice, that is the supremum and infimum exist for any non-empty subset of $\mathcal{P}(\M)$. Given $p,q\in \mathcal{P}(\M)$, we will write $p\vee q$ and $p\wedge q$ to denote the supremum and infimum of $p$ and $q$, respectively.
The projections $p$ and $q$ are said to be equivalent (relative to the von Neumann algebra $\M$) denoted by $p\sim q$, if there exists a partial isometry $v\in\ M$ such that $p =v^*v$ and $q = vv^*$. A non-zero projection $p\in
\mathcal{P}(\mathcal{M})$ is called \emph{minimal} if $q\in \mathcal{P}(\mathcal{M})$ and $q\leq p$ imply that $q=p$ or
$q=0$. The von Neumann algebra $\M$ is called \emph{non-atomic} if it has no minimal  projections. A projection $p \in P(\mathcal{M})$ is called \emph{$\sigma$-finite} (with respect to the trace $\tau$) if there exists a sequence $\{p_n\}$ in $P(\mathcal{M})$ such that $p_n\uparrow p$ and $\tau(p_n)<\infty$. If the unit element $\one$ in $\mathcal{M}$ is $\sigma$-finite, then we say that the trace $\tau$ on $\mathcal{M}$ is $\sigma$-finite \cite{KR, Takesaki, noncomm}. 

Given a self-adjoint (possibly unbounded) linear operator $x:\mathcal{D}(x)\subset H\to H$, we denote by $e^x(\cdot)$ the spectral measure of $x$. 
A closed, densely defined operator $x$, which commutes with all the unitary operators from the commutant $\M'$ of $\M$, is said to be \textit{affiliated} with $\M$. If in addition there exists $\lambda>0$ such that $\tau\left(e^{\abs{x}}(\lambda,\infty)\right)<\infty$ then $x$ is called $\tau$-\textit{measurable}. The collection of all $\tau$-measurable operators is denoted by $S\Mtau$.

 For an operator $x\in S\Mtau  $, the function $\mu(x)=\mu(\cdot,x):[0,\infty)\to[0,\infty]$ defined by
\[
\mu(t,x)=\inf\left\{s\geq 0: \tau(e^{|x|}(s,\infty))\leq t\right\},  \quad t>0,
\]
 is called a \emph{decreasing rearrangement} of $x$ or a \emph{generalized singular value function} of $x$. It follows that $\mu(x)$ is decreasing, right-continuous function on the real half-line. Note that in this paper the terms decreasing or increasing will always mean non-increasing or non-decreasing, respectively.
 Observe that if $\tauone<\infty$ then $\mu(t,x)=0$ for all $t\geq \tauone$, and so $\mu(\infty, x)=\lim_{t\rightarrow \infty}\mu(t,x)=0$.
We denote by $S_0\Mtau$ the collection of all $x\in S\Mtau$ for which  $\displaystyle\mu(\infty,x)=0$. $S_0\Mtau$
is a $*$-subalgebra in $S\Mtau $.

By a \emph{positive operator} $x$ we mean a self-adjoint operator such that $\left\langle x\xi,\xi\right\rangle \geqslant 0$ for all $\xi$ in the domain of $x$.
$S^+\Mtau$ will stand for the cone of positive measurable operators.

Any closed, densely defined linear operator $x$  has the \emph{polar decomposition} $x=u\abs{x}$, where $u$ is a partial isometry with kernel $\Ker u=\Ker x$. Moreover, the polar decomposition of $x^*$ is given by $x^*=u^*\abs{x^*}$. We have that $u^*u=s(x)=e^{\abs{x}}(0,\infty)$ and $uu^*=s(x^*)=e^{\abs{x^*}}(0,\infty)$, where \emph{support projections} $s(x)$ and $s(x^*)$ are projections onto $\Kerp{x}$ and $\Kerp{x^*}$, respectively. It is known that  $x$ is $\tau$-measurable if and only if $u\in\mathcal{M}$ and $\abs{x}$ is $\tau$-measurable. The \emph{null projection} $n(x)=\textbf{1}-s(x)=e^{\abs{x}}\{0\}$ is a projection onto $\Ker{x}$.

Let $L^0=L^0[0,\alpha)$ stand for the space of all complex-valued Lebesgue measurable functions
on $[0, \alpha)$, $\alpha\leq\infty$, (with identification a.e. with respect to the Lebesgue measure $m$). 

Given $f\in L^0$, the \emph{distribution function} $d(f)$ of $f$ is defined by $d(\lambda,f)=m\{t > 0:\,\abs{f (t)} > \lambda\}$, for all $\lambda\geq0$. The \emph{decreasing rearrangement} of $f$ is given by $\mu(t,f) = \inf\{s > 0:\,d(s,f)\leq t\}$,  $t \geq 0$. Observe that $d(f)=d(\cdot,f)$ and $\mu(f)=\mu(\cdot,f)$ are right-continuous, decreasing functions on $[0,\infty)$.
A \emph{support} of function $f\in E$, that is the set $\{t\in[0,\alpha):\,f(t)\neq 0\}$ will be denoted by $\supp f$.  Moreover for $f,g\in L^0$, we will write $f\prec g$ if $\int_0^t \mu(f)\leq \int_0^t\mu(g)$ for all $t>0$. For operators $x,y\in S\Mtau$, $x\prec y$  denotes $\mu(x)\prec \mu(y)$.

 A Banach space $E\subset L^0$ is called \emph{symmetric} (also called \emph{rearrangement invariant}) if it follows from $f\in L^0$, $g\in E$ and $\mu(f)\leq \mu(g)$ that $f\in E$ and $\norme{f}\leq\norme{g}$ \cite{BS, KPS}. It is well known that for every symmetric space $E$ on $[0,\alpha)$, we have that $E\subset L_1[0,\alpha)+L_{\infty}[0,\alpha)$. Moreover, if from $f,g\in E$ and $f\prec g$ we have that $\norme{f}\leq\norme{g}$ then $E$ is called a \emph{strongly symmetric} function space. Any symmetric space which is order continuous or satisfies the Fatou property is strongly symmetric \cite{BS,KPS}. Throughout the remainder of the paper we will assume that $E$ is strongly symmetric function space.

Given a semifinite von Neumann algebra $\Mtau$ and a symmetric Banach function space $E$ on $[0,\alpha)$, $\alpha=\tauone$, the corresponding \emph{noncommutative space} $E\Mtau$ is defined by setting
\[
E\Mtau=\{x\in S\Mtau:\quad \mu(x)\in E\}. 
\]
Observe that for any $x\in S\Mtau$, $\mu(x)=\mu(x)\chi_{[0,\alpha)}$, so we identify those two functions.
  $E\Mtau$ equipped with the norm $\normcomm{x}=\norme{\mu(x)}$ is a Banach space \cite{KS} and it is called \emph{symmetric space of measurable operators} associated with $\Mtau$ and corresponding to $E$. 

The trace $\tau$ on $\mathcal{M}^{+}$ extends uniquely to an additive, positively homogeneous, unitarily invariant and normal functional $\tilde{\tau}: S\Mtau^{+}\to[0,\infty]$, which is given by $\tilde{\tau}(x)=\int_0^{\infty}\mu(x)$, $x\in S\Mtau^+$. This extension is also denoted by $\tau$. The simple observation that an operator $x\in\nonsp$ is trace commuting with any projections $p\in P(\M)$, that is $\tau(xp)=\tau(px)$, will be used further in the paper \cite{DDP4}. 

If $E=L_p$ on $[0,\tauone)$, $1\leqslant p<\infty$, then for $x\in L_p\Mtau$ we have $\|x\|_{L_p\Mtau}=\|\mu(x)\|_{L_p}=\left(\int_0^{\tauone} \mu(\abs{x}^p) \right)^{1/p}=\left(\tau(\abs{x}^p)\right)^{1/p}$. The space $L_1\Mtau+\M$ is the space of all operators $x\in S\Mtau$ for which $\int_0^1\mu(x)<\infty$. By \cite[Proposition 2.6]{DDP4} we have that $E\Mtau\subset L_1\Mtau+\M$.

Consider a commutative von Neumann algebra $\mathcal{N}=\{N_f: L_2[0,\tauone)\to
L_2[0,\tauone):\quad f\in L_\infty[0,\tauone)\}$, where $N_f$ acts via pointwise multiplication on $L_2[0,\tauone)$ and the trace $\eta$ is given by integration, that is  $N_f(g)=f\cdot g$, $g\in L_2[0,\tauone)$, and  $\eta(N_f)=\int_0^{\tauone}fdm$, where $m$ is the Lebesgue measure on $[0,\tauone)$. 
This von Neumann algebra is commonly identified with $L_{\infty}[0,\tauone)$. $S(\mathcal{N},\eta)$ is identified with the set of all measurable functions on $R^+$ which are bounded except on a set of finite measure, denoted in this paper by $S([0,\tauone),m)$. Moreover for $N_f\in S(\mathcal{N},\eta)$, the generalized singular value function $\mu( N_f )$ is precisely the decreasing rearrangement $\mu(f)$ of the function $f\in S([0,\tauone),m)$. For any symmetric function space $E$ we have that $E(\mathcal{N},\eta)$ is isometrically isomorphic to the function space $E$.

For the theory of operator algebras we refer to
\cite{KR,Takesaki}, and for noncommutative Banach function spaces
to \cite{DDPnoncomm,noncomm,P}.


For the readers' convenience we will recall below the well known result on commuting elements of $S\Mtau$.
\begin{proposition}\cite{noncomm}
\label{prop:commute}
Two self-adjoint elements $a,b\in S\Mtau$  commute if and only if $e^a(s,\infty)e^b(s,\infty)=e^b(s,\infty)e^a(s,\infty)$ for all $s>0$.
\end{proposition}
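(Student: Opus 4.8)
The plan is to obtain the forward implication directly from the spectral theorem and to attack the reverse one by a monotone--class argument reconstructing the two spectral measures. For the forward direction, if $a$ and $b$ commute then their spectral measures commute, so $e^a(s,\infty)$ and $e^b(s,\infty)$ commute for every $s>0$ (indeed for every pair of Borel sets), and nothing is required. For the reverse direction I would fix a single projection $q=e^b(t,\infty)$, introduce $\mathcal D=\{S\subset\Real \text{ Borel}: e^a(S)q=qe^a(S)\}$, verify that $\mathcal D$ is a Dynkin system, and try to show that it contains every half--line $(s,\infty)$; then $\mathcal D$ would exhaust the Borel $\sigma$--algebra, so the whole measure $e^a(\cdot)$ would commute with $q$. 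Letting $t$ vary and repeating the argument with $a$ and $b$ interchanged would yield that $e^a(\cdot)$ and $e^b(\cdot)$ commute, hence that $a$ and $b$ commute.

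To convert commutation of the spectral measures into commutation of the operators, I would represent the positive parts as $a_+=\int_0^\infty e^a(s,\infty)\,ds$ and $b_+=\int_0^\infty e^b(t,\infty)\,dt$, approximate them by Riemann--Stieltjes sums of spectral projections, reduce the general self--adjoint case to the positive one by a shift (or by handling $a_\pm$ and $b_\pm$ separately), and carry the commutation from the sums to the limits using continuity of multiplication in the measure topology on truncated pieces.

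The hard part, and in fact the sticking point, will be the claim that $\mathcal D$ contains the half--lines $(s,\infty)$. Unwound, this demands $e^a(s,\infty)e^b(t,\infty)=e^b(t,\infty)e^a(s,\infty)$ for the fixed $t$ and \emph{every} $s$, i.e. commutation of spectral projections at \emph{independent} levels $s\neq t$, whereas the hypothesis as worded supplies only the diagonal $s=t$. I expect this gap to be genuinely unbridgeable by monotonicity of the scales $s\mapsto e^a(s,\infty)$ and $t\mapsto e^b(t,\infty)$ alone: on $\Complex^3$ with orthonormal basis $\xi_1,\xi_2,\xi_3$, let $a$ act as $2$ on $\Complex\,\tfrac1{\sqrt2}(\xi_2+\xi_3)$, as $1$ on $\Complex\,\tfrac1{\sqrt2}(\xi_2-\xi_3)$ and as $0$ on $\Complex\,\xi_1$, and let $b$ act as $\tfrac32,\tfrac12,0$ on $\xi_1,\xi_2,\xi_3$; then $e^a(s,\infty)$ commutes with $e^b(s,\infty)$ for every $s>0$, yet $ab\neq ba$. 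Hence the single--$s$ condition is not sufficient, and the equivalence holds only when the condition is read in its two--parameter form $e^a(s,\infty)e^b(t,\infty)=e^b(t,\infty)e^a(s,\infty)$ for all $s,t>0$ (the levels having to range over all of $\Real$ once positivity is dropped), which is the statement actually established in \cite{noncomm}; under that reading the plan above goes through.
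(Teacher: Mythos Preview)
The paper does not supply its own proof of this proposition; it is quoted from the unpublished monograph \cite{noncomm}, so there is no in-paper argument to compare your attempt against.

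That said, your analysis is correct, and you have in fact uncovered a genuine defect in the statement as recorded here. Your $\Complex^3$ counterexample checks out: for the operators $a,b$ you describe, at every level $s>0$ one of $e^a(s,\infty)$, $e^b(s,\infty)$ is either zero, or orthogonal to the other, or (for $0<s<\tfrac12$) they are the coordinate projections onto $\mathrm{span}(\xi_2,\xi_3)$ and $\mathrm{span}(\xi_1,\xi_2)$, which commute; yet a direct matrix computation gives $ab\neq ba$. So the single-parameter ``diagonal'' hypothesis is strictly weaker than commutation, and the equivalence as literally written fails. The correct version is the two-parameter one, $e^a(s,\infty)e^b(t,\infty)=e^b(t,\infty)e^a(s,\infty)$ for all $s,t$ (ranging over all of $\Real$ when $a,b$ are not assumed positive), and under that hypothesis your Dynkin-system plan is the standard route and goes through without difficulty: the half-lines generate the Borel $\sigma$-algebra, so the spectral measures commute, and one passes to the operators via bounded truncations and the measure topology on $S\Mtau$.

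It is worth noting that the only place the paper invokes this proposition is in Lemma~\ref{lm:noncom0}, where one of the two operators is a projection $p_i$; since $e^{p_i}(t,\infty)\in\{0,p_i\}$ for every $t>0$, verifying that $p_i$ commutes with $e^x(s,\infty)$ for all $s>0$ is exactly the two-parameter condition in that case, so the application is unaffected by the misstatement.
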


The following facts will be used in the subsequent sections.
\begin{lemma}\cite[Lemma 2.5]{DDPnoncomm}, \cite{noncomm}
\label{lm:aux1}
Suppose that the von Neumann algebra $\M$ is non-atomic and $a\in S^+\Mtau$. If $\lambda<\tauone$ then there exists $e\in P(\M)$ such that
\[
e^a\left(\mu(\lambda,a),\infty)\right)\leq e\leq e^a\left[\mu(\lambda,a),\infty)\right)\quad\text{and}\quad\tau(e)=\lambda.
\]
\end{lemma}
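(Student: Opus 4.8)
The plan is the following. Write $s:=\mu(\lambda,a)$ and put $p:=e^a(s,\infty)$ and $q:=e^a[s,\infty)$, so that $p\le q$ and the ``gap'' $q-p$ is exactly the eigenprojection $e^a(\{s\})$. First I would establish the trace sandwich $\tau(p)\le\lambda\le\tau(q)$, and then obtain the required $e$ by adjoining to $p$ a subprojection of $q-p$ of the correct trace, which is possible precisely because $\M$ has no minimal projections.

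Both trace estimates should come straight from the definition $s=\inf\{r\ge 0:\tau(e^a(r,\infty))\le\lambda\}$. For the upper bound, $\tau(e^a(r,\infty))\le\lambda$ for all $r>s$ and $e^a(r,\infty)\uparrow e^a(s,\infty)=p$ as $r\downarrow s$, so normality of $\tau$ gives $\tau(p)=\lim_{r\downarrow s}\tau(e^a(r,\infty))\le\lambda$; in particular $\tau(p)<\infty$. For the lower bound, if $s=0$ then $q=\one$ and $\tau(q)=\tauone>\lambda$, while if $s>0$ then $\tau(e^a(r,\infty))>\lambda$ for every $0\le r<s$ and $e^a(r,\infty)\downarrow e^a[s,\infty)=q$ as $r\uparrow s$, so once this decreasing net is known to have a tail of finite trace, continuity of $\tau$ from above yields $\tau(q)=\lim_{r\uparrow s}\tau(e^a(r,\infty))\ge\lambda$. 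Supplying that finiteness — the point at which the $\tau$-measurability of $a$ together with the hypothesis $\lambda<\tauone$ ought to be used — is the step I expect to be the main obstacle, since in general the trace need not pass through the infimum of a decreasing net of projections.

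Given the sandwich, the rest is routine. Since $\tau(p)\le\lambda\le\tau(q)$ and $\tau(p)<\infty$, we have $0\le\lambda-\tau(p)\le\tau(q)-\tau(p)=\tau(q-p)$. The reduced algebra $(q-p)\M(q-p)$, equipped with the restriction of $\tau$, is again a von Neumann algebra with a faithful normal semifinite trace, and being a corner of $\M$ it has no minimal projections; hence it contains a projection $f\le q-p$ with $\tau(f)=\lambda-\tau(p)$, by the standard fact that a semifinite trace on a non-atomic von Neumann algebra assumes every value in $[0,\tau(q-p)]$ on projections (reduce to finite trace by semifiniteness, then exhaust by bisection, using that every non-zero projection of a diffuse algebra splits into two non-zero subprojections). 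Setting $e:=p+f$ we get $p\le e\le p+(q-p)=q$, i.e. $e^a(s,\infty)\le e\le e^a[s,\infty)$, and $\tau(e)=\tau(p)+\tau(f)=\lambda$, as required. One even gets $ea=ae$ automatically, since $a$ acts as the scalar $s$ on the range of $q-p$, although the statement does not call for this.
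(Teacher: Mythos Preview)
The paper does not prove this lemma itself; it is quoted from \cite{DDPnoncomm,noncomm}. Your outline is the standard argument, and the construction of $e=p+f$ from the sandwich $\tau(p)\le\lambda\le\tau(q)$ via non-atomicity is correct.

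The obstacle you flag, however, is genuine and cannot be removed with only the hypotheses written. Take $\M=L_\infty[0,\infty)$ with the Lebesgue trace and $a(t)=1-e^{-t}$. Then $a\in\M^+\subset S^+\Mtau$, and $\tau\bigl(e^a(r,\infty)\bigr)=\infty$ for $0\le r<1$ while $\tau\bigl(e^a(r,\infty)\bigr)=0$ for $r\ge1$; hence $\mu(\lambda,a)=1$ for every finite $\lambda$, yet $e^a[1,\infty)=e^a(1,\infty)=0$, so no projection of positive trace can sit between $p$ and $q$. Your limit argument for $\tau(q)\ge\lambda$ \emph{does} go through under the additional assumption $s:=\mu(\lambda,a)>\mu(\infty,a)$: for any $r$ with $\mu(\infty,a)<r<s$ choose $t_0$ with $\mu(t_0,a)<r$, so that $\tau\bigl(e^a(r,\infty)\bigr)\le t_0<\infty$, and then continuity of $\tau$ from above applies to the tail of the net. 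If instead $s=\mu(\infty,a)$, one needs an extra hypothesis such as $a\ge\mu(\infty,a)\one$, which gives $q=e^a[s,\infty)=\one$ and $\tau(q)=\tauone>\lambda$ directly. In the paper's only use of this lemma (inside the proof of Proposition~\ref{lm:3}) the operator satisfies $x\ge\mu(\infty,x)\one$, so one of these two alternatives always holds and your argument is complete there; the version transcribed here appears to omit a mild hypothesis present in the original sources.
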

\begin{lemma}\cite[Lemma 3.2]{DDPnoncomm}, \cite{noncomm}
\label{lm:aux2}
Suppose that  $a\in S^+\Mtau$, $e\in P(\M)$ is such that $\tau(e)<\infty$ and
\[
e^a\left(\lambda,\infty\right)\leq e\leq e^a[\lambda,\infty).
\]
Then $ae=ea=eae$ and $\mu(ae)=\mu(a)\chi_{[0,\tau(e))}$. In particular,
\[
\tau(eae)=\tau(ae)=\int_0^{\tau(e)}\mu(a).
\]
\end{lemma}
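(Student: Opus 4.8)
The plan is to first prove that the projection $e$ commutes with $a$, and then to extract both $ae=ea=eae$ and the formula for $\mu(ae)$ from the spectral projections of $ae$. For the commutation step I would invoke \propref{prop:commute}: since $e$ is self-adjoint with $e^e(s,\infty)=e$ for $0<s<1$ and $e^e(s,\infty)=0$ for $s\ge1$, it is enough to check that $e$ commutes with $e^a(s,\infty)$ for every $s>0$. If $s\ge\lambda$, then $e^a(s,\infty)\le e^a(\lambda,\infty)\le e$, so $e\,e^a(s,\infty)=e^a(s,\infty)=e^a(s,\infty)\,e$; if $0<s<\lambda$, then $e^a(s,\infty)\ge e^a[\lambda,\infty)\ge e$, so $e\,e^a(s,\infty)=e=e^a(s,\infty)\,e$. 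Hence $e$ commutes with every $e^a(s,\infty)$ and therefore with $a$, so that the range of $e$ reduces $a$; in particular $ae=ea$ on $\mathcal{D}(a)$ and, applying $e$ once more, $ae=ea=eae$. Since $e^a(\lambda,\infty)\le e$ has finite trace, $b:=ae=eae$ is a positive $\tau$-measurable operator (affiliated with $e\M e$).

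Next I would compute the spectral distribution of $b\ge0$. Since $e$ commutes with $a$, the operator $b$ restricts to $a$ on the range of $e$ and to $0$ on the range of $\one-e$, so for every $s>0$ one gets $e^{b}(s,\infty)=e\wedge e^a(s,\infty)=e\,e^a(s,\infty)$. Applying the sandwich $e^a(\lambda,\infty)\le e\le e^a[\lambda,\infty)$ again, this reduces to $e^{b}(s,\infty)=e^a(s,\infty)$ for $s\ge\lambda$ and $e^{b}(s,\infty)=e$ for $0<s<\lambda$, whence $\tau(e^{b}(s,\infty))=\tau(e^a(s,\infty))$ for $s\ge\lambda$ and $\tau(e^{b}(s,\infty))=\tau(e)$ for $0<s<\lambda$.

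Finally I would feed this into $\mu(t,b)=\inf\{s\ge0:\tau(e^b(s,\infty))\le t\}$. For $t\ge\tau(e)$ we have $\tau(e^b(s,\infty))\le\tau(e)\le t$ for all $s>0$, so $\mu(t,b)=0$, in agreement with $\mu(a)\chi_{[0,\tau(e))}$ on $[\tau(e),\infty)$. For $0\le t<\tau(e)$ the key observation is that the same inequality $e^a(s,\infty)\ge e$ (holding for $0<s<\lambda$) forces $\tau(e^a(s,\infty))\ge\tau(e)>t$, so no $s<\lambda$ satisfies $\tau(e^a(s,\infty))\le t$; hence both $\mu(t,b)$ and $\mu(t,a)$ equal $\inf\{s\ge\lambda:\tau(e^a(s,\infty))\le t\}$ and thus coincide. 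This yields $\mu(ae)=\mu(a)\chi_{[0,\tau(e))}$, and then the ``in particular'' assertion follows from $\tau(eae)=\tau(ae)=\tilde\tau(ae)=\int_0^\infty\mu(t,ae)\,dt=\int_0^{\tau(e)}\mu(t,a)\,dt$, using the integral representation of the extended trace. I expect the only real obstacle to be operator-theoretic care: verifying rigorously that $ae=ea=eae$ and that $ae$ is $\tau$-measurable when $a$ is unbounded, and handling the degenerate value $\lambda=0$ (where the point $s=0$ must be treated via the right-continuity of $s\mapsto e^a(s,\infty)$); once the commutation is in hand through \propref{prop:commute}, the remainder is routine manipulation of spectral projections and the definition of the singular value function.
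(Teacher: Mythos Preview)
The paper does not prove this lemma at all; it is stated with citations to \cite[Lemma 3.2]{DDPnoncomm} and \cite{noncomm} and used as a black box. Your argument is correct and self-contained: the commutation of $e$ with every $e^a(s,\infty)$ follows exactly as you say from the sandwich $e^a(\lambda,\infty)\le e\le e^a[\lambda,\infty)$, \propref{prop:commute} then gives $ae=ea=eae$, and your identification $e^{ae}(s,\infty)=e\,e^a(s,\infty)$ together with the two-case computation of $\tau(e^{ae}(s,\infty))$ yields $\mu(ae)=\mu(a)\chi_{[0,\tau(e))}$ directly from the definition of the singular value function. This is essentially the standard proof one finds in the cited sources, so there is nothing to contrast; your caveats about unbounded-operator bookkeeping and the boundary case $\lambda=0$ are the right places to be careful, and both are easily handled (for $\lambda>0$ the inequality $e\le e^a[\lambda,\infty)$ forces $s(ae)=e$, while for $\lambda=0$ one has $s(a)\le e$ and hence $ae=a$, so $\mu(ae)=\mu(a)=\mu(a)\chi_{[0,\tau(e))}$ since $\mu(a)$ already vanishes beyond $\tau(s(a))\le\tau(e)$).
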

\begin{lemma}\cite{noncomm}
\label{lm:aux3}
Suppose that $0\leq x\in L_1\Mtau+\M$. Let $\lambda>0$ and $e$ be a projection in $\M$ such that $\tau(e)=\lambda$ and
\[
\tau(xe)=\int_0^{\lambda}\mu(x).
\]
Then
\[
e^x\left(\mu(\lambda,x),\infty\right)\leq e\leq e^x\left[\mu(\lambda,x),\infty\right).
\]
\end{lemma}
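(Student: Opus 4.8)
The plan is to turn both sides of the identity $\tau(xe)=\int_0^\lambda\mu(x)$ into integrals over the spectral parameter and then invoke the elementary inequality $\tau(pq)\le\min(\tau(p),\tau(q))$ for projections, together with its equality case. Write $s=\mu(\lambda,x)$. First I would record the two integral representations
\[
\tau(xe)=\int_0^\infty\tau\!\left(e^x(u,\infty)\,e\right)du,\qquad
\int_0^\lambda\mu(x)=\int_0^\infty\min\!\left(\lambda,\,\tau\!\left(e^x(u,\infty)\right)\right)du.
\]
The first comes from the spectral representation $x=\int_0^\infty e^x(u,\infty)\,du$: for the bounded truncations $x_n=x\,e^x[0,n)$ this integral converges in norm, so $\tau(ex_ne)=\int_0^n\tau(e^x(u,\infty)e)\,du$ because $y\mapsto\tau(ey)$ is a normal functional; then one lets $n\to\infty$ using $ex_ne\uparrow exe$, normality of $\tau$, and the monotone convergence theorem. (Since $x\in L_1\Mtau+\M$ and $x$ is $\tau$-measurable, $\int_0^1\mu(x)<\infty$ and $\mu(1,x)<\infty$, so $\int_0^\lambda\mu(x)<\infty$ and hence $\tau(xe)<\infty$.) The second identity is the layer-cake formula $\mu(t,x)=\int_0^\infty\chi_{\{\mu(t,x)>u\}}\,du$ combined with Tonelli and the standard duality $\mu(t,x)>u\iff\tau(e^x(u,\infty))>t$, which gives $m\{t\in[0,\lambda):\mu(t,x)>u\}=\min(\lambda,\tau(e^x(u,\infty)))$.

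Next, for any two projections $p,q\in P(\M)$ one has $\tau(pq)=\tau(pqp)\in[0,\tau(p)]$ and $\tau(pq)=\tau(qpq)\in[0,\tau(q)]$, hence $\tau(pq)\le\min(\tau(p),\tau(q))$; moreover $\tau(pq)=\tau(p)$ forces $\tau\big(p(\one-q)p\big)=0$, whence $p(\one-q)p=0$ by faithfulness of $\tau$, so $p\le q$, and symmetrically $\tau(pq)=\tau(q)\iff q\le p$. Applying this with $p=e^x(u,\infty)$, $q=e$ and integrating against $du$, the first representation yields $\tau(xe)\le\int_0^\lambda\mu(x)$ for every projection $e$ with $\tau(e)=\lambda$; the hypothesis says precisely that equality holds, so $\tau\!\left(e^x(u,\infty)\,e\right)=\min\!\left(\lambda,\tau(e^x(u,\infty))\right)$ for a.e.\ $u>0$.

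Finally I would split according to the sign of $\tau(e^x(u,\infty))-\lambda$. By the definition of $s=\mu(\lambda,x)$ and monotonicity of $u\mapsto\tau(e^x(u,\infty))$, one has $\tau(e^x(u,\infty))\le\lambda$ for $u>s$ and $\tau(e^x(u,\infty))>\lambda=\tau(e)$ for $u<s$. Hence for a.e.\ $u>s$ the equality case gives $e^x(u,\infty)\le e$, and choosing $u_n\downarrow s$ in the complement of the null set gives $e^x(s,\infty)=\bigvee_n e^x(u_n,\infty)\le e$; for a.e.\ $u<s$ it gives $e\le e^x(u,\infty)$, and choosing $u_n\uparrow s$ likewise gives $e\le\bigwedge_n e^x(u_n,\infty)=e^x[s,\infty)$. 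These are exactly the asserted inequalities. (When $s=0$ the range $u<s$ is empty and one gets only $e^x(0,\infty)\le e\le\one$, which is still the claim.) The one genuinely technical point is justifying the first integral identity for unbounded $x\in L_1\Mtau+\M$ via truncation and normality; the rest is soft.
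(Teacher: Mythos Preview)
The paper does not prove this lemma; it is quoted from the unpublished monograph \cite{noncomm} without argument, so there is no in-paper proof to compare against. Your argument is correct and is in fact one of the standard routes to this result: reduce both sides to layer-cake integrals over the spectral parameter, use the pointwise inequality $\tau(pq)\le\min(\tau(p),\tau(q))$, and read off the projection inequalities from the equality cases via faithfulness of $\tau$.

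One small slip worth fixing: with the truncation $x_n=x\,e^x[0,n)$ one has $e^{x_n}(u,\infty)=e^x(u,n)$, so $\tau(ex_ne)=\int_0^n\tau\!\left(e^x(u,n)\,e\right)du$, not $\int_0^n\tau\!\left(e^x(u,\infty)\,e\right)du$ as you wrote. The cleanest fix is to truncate instead by $x_n=x\wedge n\one=\int_0^n e^x(u,\infty)\,du$, for which $e^{x_n}(u,\infty)=e^x(u,\infty)$ on $u<n$ and the formula you want holds verbatim; the passage to the limit via $x_n\uparrow x$, normality of $\tau$ on $e(\cdot)e$, and monotone convergence then goes through exactly as you described. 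With either truncation the limiting identity $\tau(xe)=\int_0^\infty\tau\!\left(e^x(u,\infty)\,e\right)du$ is obtained, and the rest of your proof is clean.
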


\begin{proposition}
\label{prop:singfun}
\begin{itemize}
\item[(1)] \cite[Proposition 1.1]{moj} For $x\in S\Mtau $, \[\mu(\abs{x}+\mu(\infty,x)n(x))=\mu(x)\].
\item[(2)]\cite[Proposition 1.1]{moj} If $x\in S\Mtau $ and $\abs{x}\geqslant\mu(\infty,x)s(x)$ then $\mu(\abs{x}-\mu(\infty,x)s(x))=\mu(x)-\mu(\infty,x)$.
\item[(3)]\cite{noncomm}, \cite[Lemma 1.3]{CKK1}  If $x,y\in L_1\Mtau+\mathcal{M}$, $\mu(\infty,x)=\mu(\infty,y)=0$ and $\mu\left((x+y)/2\right)=\mu(x)=\mu(y)$, then $x=y$. 
\item[(4)] \cite{SCh} If $x,y\in S\Mtau$, $x=x^*$, $y\geq 0$ and $-y\leq x\leq y$, then $x\prec y$.
\item[(5)] \cite[Proposition 2.2]{CKSextreme} If $x,y\in S^+(\mathcal{M},\tau)$, $y\neq 0$ and $x\geq \mu(\infty,x)\one$, then there exists $s>0$ such that $\mu(s,x)<\mu(s,x+y)$.
\item[(6)] \cite[Proposition 3.5]{CKSextreme} If $x,y\in S(\mathcal{M},\tau)$, $y=y^*$, $x\geq \mu(\infty,x)\mathbf{1}$ and $\mu(x+iy)=\mu(x)$, then $y=0$.
\item[(7)]\cite{noncomm} If $s\geq 0$ and $p=e^{\abs{x}}(s,\infty)$ then $\mu(\abs{x}p)=\mu(x)\chi_{[0,\tau(p))}$.
\end{itemize}
\end{proposition}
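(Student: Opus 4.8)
The natural proof is a tour of the sources, supplemented by short arguments for the parts that admit them. I would treat items (3)--(6) as black boxes: (3) is the rearrangement rigidity of \cite{noncomm} and \cite[Lemma 1.3]{CKK1}, (4) is the majorization result of \cite{SCh}, and (5), (6) are \cite[Propositions 2.2 and 3.5]{CKSextreme}; reproving these would amount to redoing the analysis of equality cases in Hardy--Littlewood inequalities and of the structure of $L_1\Mtau+\M$, which is outside the scope of a preliminaries section. So the content to supply is (1), (2) and (7).

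The unifying remark for these three is that $\abs{x}$, its support projection $s(x)=e^{\abs{x}}(0,\infty)$ and its null projection $n(x)=\one-s(x)=e^{\abs{x}}\{0\}$ are mutually commuting Borel functions of $\abs{x}$. Hence, writing $\rho:=\mu(\infty,x)$, the operators $\abs{x}+\rho\,n(x)$, $\abs{x}-\rho\,s(x)$ (which is positive precisely because of the hypothesis $\abs{x}\ge\rho s(x)$ of (2)), and $\abs{x}p$ with $p=e^{\abs{x}}(s,\infty)$ are again positive Borel functions of $\abs{x}$, and their spectral resolutions can be written down by hand: for $\lambda\ge0$ one has $e^{\abs{x}+\rho n(x)}(\lambda,\infty)=n(x)+e^{\abs{x}}(\lambda,\infty)$ if $\lambda<\rho$ and $=e^{\abs{x}}(\lambda,\infty)$ if $\lambda\ge\rho$; next $e^{\abs{x}-\rho s(x)}(\lambda,\infty)=e^{\abs{x}}(\lambda+\rho,\infty)$; and $e^{\abs{x}p}(\lambda,\infty)=e^{\abs{x}}(\max\{s,\lambda\},\infty)$. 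Applying $\tau$ yields the corresponding distribution functions.

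The one extra ingredient is the elementary observation that $d(\lambda,x)=\tau(e^{\abs{x}}(\lambda,\infty))=\infty$ for every $\lambda<\rho$: since $\mu(t,x)\downarrow\rho$ we have $\mu(t,x)>\lambda$ for all $t$, and by the definition of $\mu$ this says $d(\lambda,x)>t$ for all $t$. With this in hand, (1) follows because the $\lambda<\rho$ branch only alters values of $d$ that are already infinite, so $d(\lambda,\abs{x}+\rho n(x))=d(\lambda,x)$ for all $\lambda$ and hence $\mu(\abs{x}+\rho n(x))=\mu(x)$. For (2) one has $d(\lambda,\abs{x}-\rho s(x))=d(\lambda+\rho,x)$, and inverting this while using that $\{\sigma:d(\sigma,x)\le t\}\subset[\rho,\infty)$ (so the shift by $\rho$ is harmless) gives $\mu(t,\abs{x}-\rho s(x))=\mu(t,x)-\rho$, i.e. $\mu(x)-\mu(\infty,x)$. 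For (7), $d(\lambda,\abs{x}p)=d(\lambda,x)$ for $\lambda\ge s$ and $=\tau(p)$ for $\lambda<s$; inverting gives $\mu(t,\abs{x}p)=\mu(t,x)$ for $t<\tau(p)$ and $=0$ for $t\ge\tau(p)$, that is $\mu(x)\chi_{[0,\tau(p))}$ --- and when $\tau(p)<\infty$ the same identity is immediate from \lemref{lm:aux2} applied to $a=\abs{x}$, $e=p$, since $e^{\abs{x}}(s,\infty)\le p\le e^{\abs{x}}[s,\infty)$.

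Thus (1), (2) and (7) are routine spectral calculus once the infinite-distribution-function remark is available; the only real care is needed at the boundary values $\lambda=\rho$ and $\lambda=s$, and in allowing $n(x)$ or $p$ to have infinite trace, which is why one argues through distribution functions rather than only through the finite-trace Lemmas~\ref{lm:aux2}--\ref{lm:aux3}. The genuine depth of the proposition sits entirely in the imported statements (3)--(6), among which I would expect the rigidity result (3) to be the main obstacle.
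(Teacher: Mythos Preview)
Your proposal is correct. In fact, the paper supplies no proof of this proposition at all: every item carries a citation and the text moves immediately to Corollary~\ref{cor:1}. So there is nothing to compare against at the level of argument --- the paper treats all seven statements as imported results, whereas you go further and actually verify (1), (2) and (7) by writing down the spectral resolutions of the Borel functions $|x|+\rho\,n(x)$, $|x|-\rho\,s(x)$ and $|x|p$ of $|x|$, reading off the distribution functions, and using the elementary observation that $d(\lambda,x)=\infty$ for $\lambda<\rho:=\mu(\infty,x)$. Those computations are clean and correct, including the boundary cases $\lambda=\rho$ and $\lambda=s$, and your decision to leave (3)--(6) as black-box citations matches the paper's own stance exactly. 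The only remark I would add is that (1) and (2) are also cited (to \cite[Proposition~1.1]{moj}), so your inline proofs are a bonus rather than a requirement; but they are the right proofs and cost nothing.
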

Condition (1) in the above proposition implies the following.
\begin{corollary}
\label{cor:1} Let $x\in S\Mtau$ and $p\in \mathcal{P}(\M)$. If $px=xp=0$ and $0\leq C\leq \mu(\infty,x)$ then $\mu(x+Cp)=\mu(x)$.
\end{corollary}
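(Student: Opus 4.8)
The plan is to reduce the statement to Proposition~\ref{prop:singfun}(1) after first computing $|x+Cp|$ explicitly. The starting observation is that $px=xp=0$ forces $p$ to be orthogonal to the support of $x$ in a strong sense: from $xp=0$ we get $pH\subseteq \Ker x=\Ker |x|$, that is $p\le n(x)$, and since $|x|n(x)=0$ this yields $|x|p=|x|n(x)p=0$ and hence $p|x|=0$. Taking adjoints in $px=0$ gives in addition $x^*p=px^*=0$. These four identities are all I will need.

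Next I would compute the absolute value of $x+Cp$. Since $C$ is real and $p$ is a projection, $(x+Cp)^*(x+Cp)=|x|^2+C(x^*p+px)+C^2p=|x|^2+C^2p$, the cross terms vanishing by the previous step. On the other hand $(\,|x|+Cp\,)^2=|x|^2+C(\,|x|p+p|x|\,)+C^2p=|x|^2+C^2p$ as well, because $|x|p=p|x|=0$. As $|x|+Cp\ge 0$, uniqueness of the positive square root yields $|x+Cp|=|x|+Cp$, and therefore $\mu(x+Cp)=\mu(|x|+Cp)$.

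Finally I would sandwich $|x|+Cp$ between two operators having the same singular value function as $x$. Using $0\le C\le\mu(\infty,x)$ and $p\le n(x)$,
\[
|x|\;\le\;|x|+Cp\;\le\;|x|+\mu(\infty,x)\,p\;\le\;|x|+\mu(\infty,x)\,n(x).
\]
Applying the monotonicity of the generalized singular value function on $S^+\Mtau$ (a standard property, see \cite{noncomm}) gives
\[
\mu(x)=\mu(|x|)\;\le\;\mu(|x|+Cp)\;\le\;\mu\big(|x|+\mu(\infty,x)\,n(x)\big)=\mu(x),
\]
where the first equality holds because $\mu(x)$ depends only on $|x|$ and the last equality is Proposition~\ref{prop:singfun}(1). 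Hence $\mu(x+Cp)=\mu(|x|+Cp)=\mu(x)$.

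I do not expect a genuine obstacle here: the argument is short once the identity $|x+Cp|=|x|+Cp$ is available, and the only step requiring a little care is precisely this one, namely verifying that $px=xp=0$ propagates to $|x|p=p|x|=x^*p=px^*=0$ so that every cross term in the two squares above collapses.
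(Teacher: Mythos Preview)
Your proof is correct and follows essentially the same route as the paper's: deduce $p\le n(x)$ (and $p\le n(x^*)$), establish $|x+Cp|=|x|+Cp$, then sandwich between $|x|$ and $|x|+\mu(\infty,x)n(x)$ and invoke Proposition~\ref{prop:singfun}(1). The only difference is that you spell out the square-root computation that the paper merely asserts.
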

\begin{proof}
Suppose that $px=xp=0$ and $0\leq C\leq \mu(\infty,x)$. Then $n(x)\geq p$, $n(x^*)\geq p$ and $\abs{x+Cp}=\abs{x}+Cp$. The claim follows now by Proposition \ref{prop:singfun} (1). Indeed,   $\mu(\abs{x})\leq\mu(\abs{x}+Cp)=\mu(x+Cp)\leq \mu(\abs{x}+\mu(\infty,x)n(x))=\mu(x)$.
\end{proof}

Below we include the results that show that $E$ is isometrically embedded into $\nonsp$, if certain conditions on the trace $\tau$ and von Neumann algebra $\M$ are imposed.

Recall that given two $*$-algebras $\mathcal A$ and $\mathcal B$, the mapping $\Phi:\mathcal{A}\to\mathcal{B}$ is  called a \textit{$*$-homomorphism} if $\Phi$ is an algebra homomorphism and $\Phi(x^*)=\Phi(x)^*$ for all $x\in\mathcal{A}$. If, in addition, $\mathcal A$ and $\mathcal B$ are unital and $\Phi(\one_A)=\one_B$ then $\Phi$ is called \textit{unital $*$-homomorphism}.
\begin{proposition}\cite{noncomm}, \cite[Lemma 1.3]{CKSlur}
\label{isom2}
Let $\mathcal{M}$ be a non-atomic von Neumann algebra with a faithful, normal,  $\sigma$-finite trace $\tau$ and $x\in
S_0^+\left(\mathcal{M},\tau\right)$.  Then there exists a non-atomic commutative von Neumann subalgebra $\mathcal{N}$ in $ \M$  and a $*$-isomorphism $U$ from the $*$-algebra $S(\mathcal{N},\tau)$ onto the $*$-algebra $S\left(\left[0,\tauone\right),m\right)$ such that $x\in S(\mathcal{N},\tau)$ and $\mu(y)=\mu(Uy)$ for every $y\in S(\mathcal{N},\tau)$.
\end{proposition}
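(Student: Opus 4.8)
The plan is to realize $\mathcal N$ as a suitably chosen enlargement, inside $\M$, of the abelian von Neumann algebra $\mathcal A:=W^*(x)$ generated by the spectral projections $e^x(B)$, $B$ Borel, arranged so that $\mathcal N$ is at the same time non-atomic, countably generated, and carries a faithful normal $\sigma$-finite (hence semifinite) restriction of $\tau$; the last two properties are what will permit the identification of $\mathcal N$ with $L_\infty[0,\tauone)$. Note first that $\mathcal A$ is abelian, contains $\one$, has $x$ affiliated with it, and $\tau|_{\mathcal A}$ is faithful and normal. Since $x\in S_0^+\Mtau$ one has $\tau(e^x(s,\infty))<\infty$ for every $s>0$, so $\mathcal A$ has at most countably many atoms, namely the nonzero projections among $\{e^x(\{\lambda\}):\lambda\geq 0\}$; each such atom with $\lambda>0$ has finite trace, while $n(x)=e^x(\{0\})$ may have infinite trace. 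In the latter case $\tau|_{\mathcal A}$ is not even semifinite, which is exactly why a genuine enlargement of $\mathcal A$ is unavoidable.

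Now I would refine the atoms. Fix an atom $p$ of $\mathcal A$. Because $\M$ is non-atomic, so is the corner $p\M p$; because $\tau$ is $\sigma$-finite on $\M$ and semifinite, its restriction to $p\M p$ is again $\sigma$-finite, so $p=\sum_k r_k$ for some sequence of mutually orthogonal projections $r_k\in p\M p$ with $\tau(r_k)<\infty$. Inside each non-atomic finite-trace corner $r_k\M r_k$, iterated halving of projections (using \lemref{lm:aux1}) yields a dyadic system of projections generating a von Neumann subalgebra together with a trace-preserving $*$-isomorphism onto $L_\infty([0,\tau(r_k)),m)$; the direct sum over $k$ produces a von Neumann subalgebra $\mathcal B_p\subseteq p\M p$ with a trace-preserving $*$-isomorphism $\mathcal B_p\cong L_\infty([0,\tau(p)),m)$, so that $\mathcal B_p$ is non-atomic, countably generated, and $\tau|_{\mathcal B_p}$ is $\sigma$-finite. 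Let $\mathcal N$ be the von Neumann algebra generated by $\mathcal A$ together with all the $\mathcal B_p$, $p$ ranging over the countably many atoms of $\mathcal A$. Since those atoms are mutually orthogonal projections of the abelian algebra $\mathcal A$ and $\mathcal B_p\subseteq p\M p$, the generating family is commutative, so $\mathcal N$ is abelian; it contains $\one$ and $x$; below each atom $p$ it contains the non-atomic $\mathcal B_p$, and below $\one-\sum_p p$ it agrees with the non-atomic continuous part of $\mathcal A$, hence $\mathcal N$ is non-atomic; it is generated by the countable set consisting of $x$ and one generator of each $\mathcal B_p$, hence countably generated; and $\tau|_{\mathcal N}$ is $\sigma$-finite, because $\one-\sum_p p$ is already $\sigma$-finite inside $\mathcal A$ (the continuous part of the spectral measure of $x$ is carried by $(0,\infty)$, where $\tau\circ e^x$ is $\sigma$-finite) and each atom $p$ is $\sigma$-finite inside $\mathcal B_p$. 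Thus $\tau|_{\mathcal N}$ is a faithful, normal, $\sigma$-finite trace on $\mathcal N$, and $x\in S(\mathcal N,\tau)$ since $x$ is affiliated with $\mathcal N$ and $\tau$-measurable.

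Finally I would identify $(\mathcal N,\tau)$ with $([0,\tauone),m)$. As $\mathcal N$ is abelian and countably generated, it is singly generated, $\mathcal N=W^*(a)$ with $a=a^*$, and the spectral theorem exhibits $\mathcal N$ as the $L_\infty$-algebra of the measure algebra $\mathrm{Bor}(\sigma(a))/\{B:e^a(B)=0\}$; this is a separable measure algebra, and since $\mathcal N$ is non-atomic it is isomorphic to the measure algebra of Lebesgue measure. Transporting $\tau$ through such an isomorphism realizes $(\mathcal N,\tau|_{\mathcal N})$ as $(L_\infty(\Omega,\nu),\int\cdot\,d\nu)$ for some non-atomic $\sigma$-finite measure $\nu$ on a standard Borel space $\Omega$ with $\nu(\Omega)=\tau(\one)=\tauone$. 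Since any two non-atomic $\sigma$-finite measures on standard Borel spaces with the same total mass are isomorphic, $(\Omega,\nu)\cong([0,\tauone),m)$; composing gives a trace-preserving $*$-isomorphism $\mathcal N\to L_\infty[0,\tauone)$, which extends canonically to a $*$-isomorphism $U$ of $S(\mathcal N,\tau)$ onto $S([0,\tauone),m)$. Being trace-preserving, $U$ preserves distribution functions, whence $\mu(y)=\mu(Uy)$ for all $y\in S(\mathcal N,\tau)$; in particular $\mu(Ux)=\mu(x)$, and the proof is complete.

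The essential difficulty is securing all three features of $\mathcal N$ simultaneously: a maximal abelian subalgebra of $\M$ containing $W^*(x)$ is automatically non-atomic, but need be neither countably generated nor $\sigma$-finitely traced, whereas $W^*(x)$ itself is countably generated but may be atomic and may carry a non-semifinite restriction of $\tau$ (at $n(x)$). So the delicate step is the countable atom-refinement above, where non-atomicity of $\M$ is used to subdivide the atoms of $W^*(x)$ while $\sigma$-finiteness of $\tau$ is used both to handle an infinite-trace atom such as $n(x)$ and to keep $\tau|_{\mathcal N}$ $\sigma$-finite. Keeping the refinement countable is what preserves countable generation of $\mathcal N$, and this is indispensable for the last step, since a non-atomic abelian von Neumann algebra with a $\sigma$-finite trace but non-separable predual is not $*$-isomorphic to $L_\infty[0,\tauone)$.
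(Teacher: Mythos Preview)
The paper does not supply its own proof of this proposition; it is quoted without argument from the external references \cite{noncomm} and \cite[Lemma~1.3]{CKSlur}. So there is nothing in the paper to compare your proof against directly.

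That said, your sketch is correct and is essentially the standard construction one finds in those sources. The key steps---(a) observing that $W^*(x)$ has at most countably many atoms, all of the form $e^x\{\lambda\}$, with those for $\lambda>0$ having finite trace and the possible atom $n(x)$ handled via $\sigma$-finiteness of $\tau$; (b) refining each atom inside $p\M p$ by a dyadic scheme to obtain a non-atomic abelian $\mathcal B_p$ with $\tau|_{\mathcal B_p}$ $\sigma$-finite; (c) checking that the algebra $\mathcal N$ generated by $W^*(x)$ and the $\mathcal B_p$'s is abelian, non-atomic, countably generated, and carries a $\sigma$-finite restriction of $\tau$; (d) invoking the classification of separable non-atomic measure algebras to identify $(\mathcal N,\tau|_{\mathcal N})$ with $(L_\infty[0,\tauone),m)$---are all sound. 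Your verification that $p_{\mathrm{continuous}}\leq s(x)$ (so that the continuous part of $W^*(x)$ is already $\sigma$-finitely traced) and your use of \lemref{lm:aux1} for the halving are both legitimate. Your closing paragraph correctly identifies why neither $W^*(x)$ itself nor an arbitrary maximal abelian extension would work, and why countable generation of $\mathcal N$ is indispensable for the final identification with $L_\infty[0,\tauone)$.

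One minor point of phrasing: when you write ``generated by the countable set consisting of $x$ and one generator of each $\mathcal B_p$'', recall that $x$ itself is generally unbounded and only affiliated with $\mathcal N$; the countable generating set for $\mathcal A=W^*(x)$ should be taken as $\{e^x(s,\infty):s\in\mathbb Q_{>0}\}$. This does not affect the argument.
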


Given an operator $x\in S\Mtau$ and a projection $p\in \mathcal{P}(\M)$ we define the von Neumann algebra $\M_{p}=\{py|_{p(H)}:\quad y\in\M\}$. It is known that there is a unital  $*$-isomorphism from $S\left(\M_p,\tau_p\right)$ onto $pS\Mtau p$. Moreover, the decreasing rearrangement $\mu^{\tau_{p}}$ computed with respect to the von Neumann algebra $\left(\M_{p},\tau_{p}\right)$ is given by $\mu^{\tau_{p}}(y)=\mu(pyp)$, $y\in S\left(\M_p,\tau_p\right)$.
See \cite{moj,CKK1, DDPnoncomm} for details.

Using the theory of measure preserving transformations which retrieve functions from their decreasing rearrangements and $U^{-1}$ from  Proposition \ref{isom2} the following can be shown.

\begin{proposition} \cite{noncomm}
\label{isom1}
Suppose that $\mathcal{M}$ is a non-atomic von Neumann algebra with a faithful, normal  trace $\tau$. 
Let $x\in\left( L_1(\mathcal{M},\tau)+\mathcal{M}\right)\cap S_0^+\left(\mathcal{M},\tau\right)$. Then there exist a non-atomic
commutative von Neumann subalgebra \newline
$\mathcal{N}\subset s(x)\M s(x)$ and a unital $*$-isomorphism
$V$ acting from the $*$-algebra\linebreak $S\left(\left[0,\tau(s(x))\right),m\right)$ into the $*$-algebra $S(\mathcal{N},\tau)$, such that 
\[ V\mu(x)=x\ \ \ \text{ and }\ \ \ \mu(V(f))=\mu(f)\ \ \text{ for all } f\in S\left(\left[0,\tau(s(x))\right),m\right).
\]
\end{proposition}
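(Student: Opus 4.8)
The plan is to reduce everything to the compression $\M_p$ by $p=s(x)$, invoke Proposition~\ref{isom2} there, and then post-compose with a measure-preserving change of variables to turn the resulting function into $\mu(x)$ itself.

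First I would verify that $p=s(x)$ is $\sigma$-finite. Since $x\in L_1\Mtau+\M$ we have $\int_0^1\mu(x)<\infty$, and as $\mu(x)$ is decreasing this forces $\mu(t,x)<\infty$ for every $t>0$; together with $\mu(\infty,x)=0$ (because $x\in S_0\Mtau$) this gives $\tau(e^{\abs{x}}(\lambda,\infty))<\infty$ for every $\lambda>0$. Hence $e^{\abs{x}}(1/n,\infty)\uparrow e^{\abs{x}}(0,\infty)=s(x)$ is an increasing sequence of finite-trace projections, so $s(x)$ is $\sigma$-finite. Now consider $(\M_p,\tau_p)$ with $p=s(x)$: $\M_p$ is non-atomic (a minimal projection of $\M_p$ would be a minimal projection of $\M$ lying below $p$), $\tau_p$ is a faithful normal $\sigma$-finite trace with $\tau_p(\one_{\M_p})=\tau(p)$, and, identifying $S(\M_p,\tau_p)$ with $pS\Mtau p$, the operator $x=pxp$ belongs to $S_0^+(\M_p,\tau_p)$ with $\mu^{\tau_p}(x)=\mu(pxp)=\mu(x)$. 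Note also $\mu(x)=\mu(x)\chi_{[0,\tau(p))}$, since $\mu(t,x)=0$ whenever $t\geq\tau(p)=\tau(e^{\abs{x}}(0,\infty))$.

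Next I would apply Proposition~\ref{isom2} to the pair $(\M_p,\tau_p)$ and to $x$. This yields a non-atomic commutative von Neumann subalgebra $\mathcal{N}\subset\M_p\cong s(x)\M s(x)$ and a $*$-isomorphism $U$ from $S(\mathcal{N},\tau)$ onto $S([0,\tau(p)),m)$ with $x\in S(\mathcal{N},\tau)$ and $\mu(Uy)=\mu(y)$ for all $y\in S(\mathcal{N},\tau)$. Put $g:=Ux$; since $U$ is a $*$-isomorphism and $x\geq0$, $g$ is a nonnegative function on $[0,\tau(p))$ with $\mu(g)=\mu(x)$, and $g$ is locally integrable because $\int_0^1\mu(g)=\int_0^1\mu(x)<\infty$. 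By the retrieval theory for decreasing rearrangements (Ryff-type), there is a measure-preserving transformation $\sigma$ of $[0,\tau(p))$ such that $g=\mu(g)\circ\sigma=\mu(x)\circ\sigma$ almost everywhere. The composition operator $C_\sigma\colon f\mapsto f\circ\sigma$ is then a unital, injective $*$-homomorphism of $S([0,\tau(p)),m)$ into itself, and it is equimeasurable, $\mu(C_\sigma f)=\mu(f)$, because $\sigma$ preserves $m$ (in particular $\sigma^{-1}$ sends finite-measure sets to finite-measure sets, so $C_\sigma f$ is again bounded off a set of finite measure).

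Finally, set $V:=U^{-1}\circ C_\sigma$. As a composition of the bijection $U^{-1}$ with the $*$-embedding $C_\sigma$, $V$ is a unital $*$-isomorphism of $S([0,\tau(s(x))),m)$ into $S(\mathcal{N},\tau)$ (it need not be onto, since $\sigma$ need not be invertible, which is exactly why the statement only asserts ``into''). Moreover $V\mu(x)=U^{-1}(\mu(x)\circ\sigma)=U^{-1}g=U^{-1}Ux=x$, and for every $f\in S([0,\tau(s(x))),m)$ one has $\mu(Vf)=\mu(U^{-1}(f\circ\sigma))=\mu(f\circ\sigma)=\mu(f)$, using Proposition~\ref{isom2} for the middle equality and equimeasurability of $C_\sigma$ for the last. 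The single genuinely nontrivial ingredient, and the step I expect to demand the most care, is the existence of the measure-preserving transformation $\sigma$ realizing $g=\mu(x)\circ\sigma$ on the (possibly infinite) interval $[0,\tau(p))$: this is the classical retrieval of a function from its decreasing rearrangement and relies on $[0,\tau(p))$ being a standard non-atomic $\sigma$-finite measure space together with local integrability of $g$. The remaining points — $\sigma$-finiteness of $s(x)$, non-atomicity and trace properties of $\M_p$, and that $U^{-1}$ and $C_\sigma$ assemble into $V$ — are routine.
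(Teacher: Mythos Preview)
Your proposal is correct and follows exactly the route the paper indicates: the paper does not give a full proof of this proposition (it is cited from \cite{noncomm}) but states just before it that the result ``can be shown'' by combining $U^{-1}$ from Proposition~\ref{isom2} with the theory of measure-preserving transformations retrieving functions from their decreasing rearrangements, and this is precisely what you do --- pass to $\M_{s(x)}$ to secure $\sigma$-finiteness, apply Proposition~\ref{isom2}, and post-compose $U^{-1}$ with the Ryff-type composition operator $C_\sigma$. Your identification of the retrieval step $g=\mu(x)\circ\sigma$ as the only genuinely delicate point is also accurate.
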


\begin{proposition} \cite{noncomm}
\label{isom3}
Suppose that $\mathcal{M}$ is a non-atomic von Neumann algebra with a faithful, normal,  $\sigma$-finite trace $\tau$.
Let $x\in\left( L_1(\mathcal{M},\tau)+\mathcal{M}\right)\cap S_0^+\left(\mathcal{M},\tau\right)$ and $\tau(s(x))<\infty$. Then there exist a non-atomic commutative von Neumann subalgebra $\mathcal{N}\subset\M$ and a unital $*$-isomorphism $V$ acting from the $*$-algebra $S\left(\left[0,\tauone\right),m\right)$ into the $*$-algebra $S(\mathcal{N},\tau)$, such that 
\[ V\mu(x)=x\ \ \ \text{ and }\ \ \ \mu(V(f))=\mu(f)\ \ \text{ for all } f\in S\left([0,\tauone),m\right).
\]
\end{proposition}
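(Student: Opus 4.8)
The plan is to build $V$ out of two instances of the commutative-model results already proved: Proposition~\ref{isom1} applied to $x$ will handle the "$s(x)$-part", and Proposition~\ref{isom2} applied to the corner $n(x)\M n(x)$ will supply the extra "$n(x)$-part" needed to enlarge the domain from $S\bigl([0,\tau(s(x))),m\bigr)$ to $S\bigl([0,\tauone),m\bigr)$, after which one glues the two isomorphisms along the orthogonal decomposition $\one=s(x)+n(x)$. First I would dispose of the degenerate case $n(x)=0$: then $s(x)=\one$, so $\tauone=\tau(s(x))$ and $S\bigl([0,\tauone),m\bigr)=S\bigl([0,\tau(s(x))),m\bigr)$, and Proposition~\ref{isom1} (whose $\mathcal N$ lies in $s(x)\M s(x)=\M$) is already the assertion; note that $x\in S(\mathcal N,\tau)$ forces the identity of that $\mathcal N$ to be $s(x)=\one$. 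So assume $n(x)\neq0$.

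Next I would apply Proposition~\ref{isom1} to $x$, obtaining a non-atomic commutative $\mathcal N_0\subset s(x)\M s(x)$ and a unital $*$-isomorphism $V_0\colon S\bigl([0,\tau(s(x))),m\bigr)\to S(\mathcal N_0,\tau)$ with $V_0\mu(x)=x$ and $\mu(V_0g)=\mu(g)$ for all $g$; since $s(x)$ is the support of $x\in S(\mathcal N_0,\tau)$, the identity of $\mathcal N_0$ is $\geq s(x)$, hence (being $\leq s(x)$) equals $s(x)$. The corner $n(x)\M n(x)$ with the induced trace $\tau_{n(x)}$ is again non-atomic, faithful, normal, semifinite and $\sigma$-finite (here is precisely where the $\sigma$-finiteness hypothesis enters, via $n(x)\leq\one$), with $\tau_{n(x)}(n(x))=\tau(n(x))=\tauone-\tau(s(x))$. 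Picking any nonzero finite-trace $q\in\mathcal P(n(x)\M n(x))$ (which lies in $S_0^+(n(x)\M n(x),\tau_{n(x)})$) and applying Proposition~\ref{isom2} to $\bigl(n(x)\M n(x),\tau_{n(x)}\bigr)$ and $q$ gives a non-atomic commutative $\mathcal N_1\subset n(x)\M n(x)$ and a $*$-isomorphism $U_1\colon S(\mathcal N_1,\tau_{n(x)})\to S\bigl([0,\tau(n(x))),m\bigr)$ with $\mu^{\tau_{n(x)}}(y)=\mu(U_1y)$ for all $y$. A bijective $*$-homomorphism preserves units, so $\mu^{\tau_{n(x)}}(\one_{\mathcal N_1})=\mu(\chi_{[0,\tau(n(x)))})=\chi_{[0,\tau(n(x)))}$, whence $\tau_{n(x)}(\one_{\mathcal N_1})=\tau(n(x))=\tau_{n(x)}(n(x))$ and, by faithfulness, the identity of $\mathcal N_1$ is $n(x)$. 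Put $W=U_1^{-1}$; recalling that for $y$ affiliated with $\mathcal N_1\subset n(x)\M n(x)$ one has $\mu^{\tau_{n(x)}}(y)=\mu(n(x)yn(x))=\mu(y)$, we get $\mu(Wh)=\mu(h)$ for every $h$.

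Now I would glue. Since $\mathcal N_0\subset s(x)\M s(x)$, $\mathcal N_1\subset n(x)\M n(x)$ and $s(x)+n(x)=\one$, the von Neumann algebra $\mathcal N$ generated by $\mathcal N_0\cup\mathcal N_1$ is the direct sum $\mathcal N_0\oplus\mathcal N_1$: non-atomic, commutative, with $\one_{\mathcal N}=s(x)+n(x)=\one$, and $S(\mathcal N,\tau)=S(\mathcal N_0,\tau)\oplus S(\mathcal N_1,\tau)$. With $a:=\tau(s(x))$ (so $\tauone=a+\tau(n(x))$ and, since $n(x)\neq0$, $a<\tauone$), restriction-and-translation furnishes a unital $*$-isomorphism $\Theta\colon S\bigl([0,\tauone),m\bigr)\to S\bigl([0,a),m\bigr)\oplus S\bigl([0,\tau(n(x))),m\bigr)$, $f\mapsto\bigl(f\chi_{[0,a)},\,t\mapsto f(a+t)\bigr)$, and I set $V:=(V_0\oplus W)\circ\Theta$. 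Then $V$ is a unital injective $*$-homomorphism into $S(\mathcal N,\tau)$; since $\mu(x)$ is supported in $[0,a)$ (because $\mu(\infty,x)=0$ and $\tau(e^{\abs{x}}(0,\infty))=\tau(s(x))=a$), we obtain $V\mu(x)=V_0\mu(x)+W(0)=x$. Finally, $Vf=y_0+y_1$ with $y_0=s(x)(Vf)s(x)$ and $y_1=n(x)(Vf)n(x)$ acting on the orthogonal subspaces whose sum is $H$, so $\abs{Vf}=\abs{y_0}+\abs{y_1}$ with orthogonal supports, $e^{\abs{Vf}}(\lambda,\infty)=e^{\abs{y_0}}(\lambda,\infty)+e^{\abs{y_1}}(\lambda,\infty)$ for $\lambda>0$, hence $d(\lambda,Vf)=d(\lambda,y_0)+d(\lambda,y_1)$; using $\mu(y_0)=\mu(f\chi_{[0,a)})$ and $\mu(y_1)=\mu\bigl(f(a+\cdot)\bigr)$ this sum equals $d(\lambda,f)$, and therefore $\mu(Vf)=\mu(f)$.

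I do not expect a deep obstacle: the substance is already packaged in Propositions~\ref{isom1} and~\ref{isom2}. The two points that need genuine care are (i) pinning down the units $\one_{\mathcal N_0}=s(x)$ and $\one_{\mathcal N_1}=n(x)$, so that $\mathcal N$ really contains $\one_\M$ and $V$ comes out unital, and (ii) the singular-value bookkeeping for the block-diagonal operator $Vf$ in the last step; the $\sigma$-finiteness hypothesis is used only to license the application of Proposition~\ref{isom2} to the corner $n(x)\M n(x)$.
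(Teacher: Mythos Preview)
The paper does not supply its own proof of Proposition~\ref{isom3}; it is quoted from the unpublished monograph \cite{noncomm}. Your strategy---splitting along $\one=s(x)+n(x)$, invoking Proposition~\ref{isom1} on the $s(x)$-corner and Proposition~\ref{isom2} on the $n(x)$-corner, and gluing the two commutative models via $\Theta$---is sound and is the natural way to deduce the statement from those two ingredients. Your identification of where $\sigma$-finiteness enters (to make the corner $n(x)\M n(x)$ eligible for Proposition~\ref{isom2}) is exactly right, and the distribution-function bookkeeping for $\mu(Vf)=\mu(f)$ is correct.

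One step needs repair. To conclude $\one_{\mathcal N_1}=n(x)$ you argue ``$\tau_{n(x)}(\one_{\mathcal N_1})=\tau(n(x))$, so by faithfulness they coincide.'' This is fine when $\tau(n(x))<\infty$, but when $\tauone=\infty$ (hence $\tau(n(x))=\infty$, since $\tau(s(x))<\infty$) the difference $\tau_{n(x)}(n(x)-\one_{\mathcal N_1})$ is $\infty-\infty$ and faithfulness does not apply. The fix is immediate: in Proposition~\ref{isom2} the phrase ``von Neumann subalgebra $\mathcal N$ in $\M$'' is meant in the standard sense that $\mathcal N$ shares the identity of the ambient algebra, so applying it inside $n(x)\M n(x)$ yields $\one_{\mathcal N_1}=n(x)$ by convention and no trace argument is needed. (Your parallel claim $\one_{\mathcal N_0}=s(x)$ is unaffected, since it rests on the support projection of $x$ rather than on a trace comparison.) With this adjustment the argument is complete.
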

We will need further a specific version of the above propositions.
\begin{corollary}
\label{cor:isom}
Let $\mathcal{M}$ be a non-atomic von Neumann algebra with a faithful, normal,  $\sigma$-finite trace $\tau$, $x\in S\Mtau$, and $\abs{x}\geq \mu(\infty,x)s(x)$. Denote by $p=s(\abs{x}-\mu(\infty,x)s(x))$ and define projection $q\in \mathcal{P}(\M)$ in the following way.
\begin{itemize}
\item[{(i)}] If $\tau(s(x))<\infty$ set $q=\one$.
\item[{(ii)}] If $\tau(s(x))=\infty$ and $\tau(p)<\infty$, set $q=s(x)$.
\item[{(iii)}] If $\tau(p)=\infty$, set $q=p$.
\end{itemize}
 Then there exist a non-atomic commutative von Neumann subalgebra $\mathcal{N}\subset q\M q$ and a unital $*$-isomorphism
$V$ acting from the $*$-algebra $S\left(\left[0,\tauone\right),m\right)$ into the $*$-algebra $S(\mathcal{N},\tau)$, such that 
\[ V\mu(x)=\abs{x}q\ \ \ \text{ and }\ \ \ \mu(V(f))=\mu(f).
\]
for all  $f\in S\left([0,\tauone),m\right)$.
\end{corollary}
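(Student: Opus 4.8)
The plan is to reduce the whole assertion to a single application of Proposition~\ref{isom1} or~\ref{isom3}, fed not with $x$ but with the positive operator $y:=\abs{x}-\mu(\infty,x)s(x)$, and carried out inside the corner $q\M q$ of $\M$. Throughout write $c=\mu(\infty,x)$, $s=s(x)$, $p=s(y)$. First I would assemble the bookkeeping. Since $y\ge 0$ by hypothesis, Proposition~\ref{prop:singfun}(2) gives $\mu(y)=\mu(x)-c$, so $\mu(\infty,y)=0$ and $y\in S_0^+\Mtau$; as $y$ vanishes on $\Ker x$ we have $p\le s$; and $y\in L_1\Mtau+\M$ whenever $x$ is, which is the only case needed in the sequel (and anyway the constructions underlying Propositions~\ref{isom1} and~\ref{isom3} extend to all $\tau$-measurable operators). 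Because $\tau(p)\le\tau(s)$, the clauses (i)--(iii) are mutually exclusive and exhaustive; in each of them $q\ge p$, and $\tau(q)=\tauone$. This is trivial in (i), where moreover $\tau(s)<\infty$ forces $\mu(\infty,x)=0$, i.e.\ $y=\abs{x}$; in (ii) and (iii) the hypotheses $\tau(s)=\infty$, resp.\ $\tau(p)=\infty$, already force $\tauone=\infty$, so $\tau(q)$, being $\tau(s)$ or $\tau(p)$, equals $\infty=\tauone$.

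The algebraic identity I would then isolate is
\[
\abs{x}q=y+cq .
\]
Indeed $\abs{x}=y+cs$, while $yq=y$ (since $s(y)=p\le q$) and $c\,sq=cq$ (because $sq=q$ in cases (ii)--(iii), and $c=0$ in case (i)). It therefore suffices to produce a non-atomic commutative von Neumann subalgebra $\mathcal N\subset q\M q$ and a unital $*$-isomorphism $V$ from $S([0,\tauone),m)$ into $S(\mathcal N,\tau)$ with $V\mu(y)=y$ and $\mu(V(f))=\mu(f)$ for all $f$: being unital, such a $V$ sends the unit $\chi_{[0,\tauone)}$ to the unit $q$ of $\mathcal N$, so $V\mu(x)=V\bigl(\mu(y)+c\chi_{[0,\tauone)}\bigr)=y+cq=\abs{x}q$, and the conclusion follows.

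To build $V$ I would pass to the von Neumann algebra $\M_q=q\M q$ with trace $\tau_q$. It is again non-atomic, and $\tau_q$ is $\sigma$-finite because every projection of a semifinite von Neumann algebra with $\sigma$-finite trace is itself $\sigma$-finite. Since $p=s(y)\le q$ we have $y=qyq\in S_0^+(\M_q,\tau_q)$, with $\mu^{\tau_q}(y)=\mu(qyq)=\mu(y)$ and support $p$ in $\M_q$. If $\tau(p)<\infty$ (cases (i) and (ii)), then Proposition~\ref{isom3} applied inside $\M_q$ provides $\mathcal N\subset\M_q$ and a unital $*$-isomorphism $V$ from $S([0,\tau_q(q)),m)$ into $S(\mathcal N,\tau)$ with $V\mu(y)=y$ and $\mu(V(f))=\mu(f)$. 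If $\tau(p)=\infty$ (case (iii)), then $p=q$ is the support of $y$ in $\M_q$, and Proposition~\ref{isom1} applied inside $\M_q$ provides $\mathcal N\subset p\M_q p=q\M q$ and such a $V$ with the same properties. In either case $\mathcal N\subset q\M q$, and since $\tau_q(q)=\tau(q)=\tauone$ the domain of $V$ is exactly $S([0,\tauone),m)$. Together with the previous paragraph this proves the corollary.

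The step I expect to be the main obstacle is this last matching: one must invoke the embedding propositions in the correct ambient algebra --- $\M$ in case (i), $s(x)\M s(x)$ in case (ii), $p\M p$ in case (iii) --- so that the domain of $V$ is the full $S([0,\tauone),m)$ rather than the a priori smaller $S([0,\tau(s(x))),m)$ or $S([0,\tau(p)),m)$, and one must check that the output $\mathcal N$ indeed sits in $q\M q$ with unit $q$; the identities $\tau(q)=\tauone$ and $\abs{x}q=y+cq$ are precisely what make these two points work. A secondary technicality is verifying the $L_1\Mtau+\M$ membership demanded by Propositions~\ref{isom1} and~\ref{isom3}.
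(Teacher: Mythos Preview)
Your proof is correct and follows essentially the same approach as the paper's. Both arguments reduce to applying Proposition~\ref{isom1} or~\ref{isom3} to the operator $y=\abs{x}-\mu(\infty,x)s(x)\in S_0^+\Mtau$ in the appropriate corner, and then use unitality of $V$ (i.e.\ $V\chi_{[0,\tauone)}=q$) together with $\mu(x)=\mu(y)+c$ to upgrade $V\mu(y)=y$ to $V\mu(x)=\abs{x}q$; your identity $\abs{x}q=y+cq$ packages this step neatly, and your uniform passage to $\M_q$ streamlines the case split slightly, but the substance is identical to the paper's case-by-case treatment.
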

\begin{proof}
Observe that $p=s(\abs{x}-\mu(\infty,x)s(x))=e^{\abs{x}}(\mu(\infty,x),\infty)\leq s(x)$.  Hence if $\tau(p)=\infty$ then also $\tau(s(x))=\infty$, and therefore conditions (i), (ii), and (iii) give all possible cases. Furthermore, by Proposition \ref{prop:singfun} (2), $\mu(\abs{x}-\mu(\infty,x)s(x))=\mu(x)-\mu(\infty,x)$, and so $\abs{x}-\mu(\infty,x)s(x)\in S_0^+\Mtau$. 

Note that in either case $\tau(q)=\tauone$. Hence in view of Proposition \ref{prop:singfun} (7),  it follows that $\mu(\abs{x}q)=\mu(x)\chi_{[0,\tau(q))}=\mu(x)$.

Case (i). Since $\tau(s(x))<\infty$, we have that $\mu(\infty,x)=0$. Therefore the claim is an immediate consequence of the Proposition \ref{isom3} applied to $\abs{x}$.

Case (ii). Let $\tau(s(x))=\infty$, $\tau(p)<\infty$ and $q=s(x)$. 
Applying Proposition \ref{isom3} to the operator $\abs{x}-\mu(\infty,x)s(x)=s(x)(\abs{x}-\mu(\infty,x)s(x))s(x)\in s(x)S\Mtau s(x)$ and to the von Neumann algebra $s(x)\M s(x)$, there exists a non-atomic commutative von Neumann algebra $\mathcal{N}\subset s(x)\M s(x)$ and a $*$-isomorphism $V$ from  $S\left(\left[0,\tau(s(x))\right),m\right)= S\left(\left[0,\infty\right),m\right)$ into $S(\mathcal{N},\tau)$ such that 
\[ 
V\mu(\abs{x}-\mu(\infty,x)s(x))
=\abs{x}-\mu(\infty,x)s(x)\,\text{ and }\, \mu(V(f))=\mu(f)
\]
for all  $f\in S\left([0,\infty),m\right)$.
Since $V(\chi_{[0,\infty)})=s(x)$,
\begin{align*}
\abs{x}-\mu(\infty,x)s(x)&=V\mu(\abs{x}-\mu(\infty,x)s(x))=V(\mu(x)-\mu(\infty,x))\\
&=V\mu(x)-\mu(\infty,x)V(\chi_{[0,\infty)})=V\mu(x)-\mu(\infty,x)s(x),
\end{align*}
and consequently $V\mu(x)=\abs{x}=\abs{x}s(x)$.

Case (iii). Assume that $\tau(p)=\infty$ and $q=p$. By Proposition \ref{isom1} applied to the operator $\abs{x}-\mu(\infty,x)s(x)$ and von Neumann algebra $\M$, there exists a non-atomic commutative von Neumann algebra $\mathcal{N}\subset p\M p$ and a $*$-isomorphism $V$ from  $S\left(\left[0,\tau(p)\right),m\right)= S\left(\left[0,\infty\right),m\right)$ into $S(\mathcal{N},\tau)$ such that 
\[ 
V\mu(\abs{x}-\mu(\infty,x)s(x))=\abs{x}-
\mu(\infty,x)s(x)\,\text{ and }\,\mu(V(f))=\mu(f)
\]
for all $f\in S\left([0,\infty),m\right)$.
Since  $p\leq s(x)$, \[\abs{x}-\mu(\infty,x)s(x)=(\abs{x}-\mu(\infty,x)s(x))p
=\abs{x}p-\mu(\infty,x)p\] and $V(\chi_{[0,\infty)})=p$. Thus  again we have
\begin{align*}
\abs{x}p-\mu(\infty,x)p&=\abs{x}-\mu(\infty,x)s(x)
=V\mu(\abs{x}-\mu(\infty,x)s(x))\\
&=V\left(\mu(x)-\mu(\infty,x)\right)
=V\mu(x)-\mu(\infty,x)V(\chi_{[0,\infty)})\\
&=V\mu(x)-\mu(\infty,x)p,
\end{align*}
and $V\mu(x)=\abs{x}p$.
\end{proof}
\begin{remark}
\label{rm2}
We will describe below the construction of a non-atomic von Neumann algebra $\mathcal{A}$ with the trace $\kappa$, such that $E\Mtau$  embeds isometrically into $E(\mathcal{A},\kappa)$, for any symmetric function space $E$.

  Let
$\mathcal{A}=\mathcal{N}\overline\otimes \mathcal{M}$ be a tensor
product of von Neumann algebras $\mathcal{N}$ and $\mathcal{M}$, where $\mathcal{N}$ is a commutative von Neumann algebra identified with $L_{\infty}[0,1]$ with the trace $\eta$.  Let 
$\kappa= \eta\otimes \tau$ be a tensor product of the traces $\eta$ and $\tau$, that is $\kappa(N_f\otimes x)=\eta(N_f)\tau(x)$, \cite{KR,Takesaki}. It is well known that
$\left(\mathcal{A},\kappa\right)$ has no atoms.

Let $\mathds{1}$ be the identity operator on $L^2[0,1]$ and denote by $\Complex\mathds{1}=\{\lambda \mathds{1}:\lambda\in\Complex\}$.
 Let $x\in
S\left(\mathcal{M},\tau\right)$ and consider a linear subspace $D$
in $ L_2[0,1]\otimes H$ generated by the vectors of the form
$\zeta\otimes\xi$, where  $\zeta\in
L_2[0,1]$ and $\xi\in D(x)\subset H$. For every $\displaystyle \alpha=\sum_{i=1}^n\zeta_i\otimes\xi_i
\in D$ define $\displaystyle
(\mathds{1}\otimes x)(\alpha)=\sum_{i=1}^n\zeta_i\otimes x\xi_i$.
The linear operator $\mathds{1}\otimes x:D\rightarrow L_2[0,1]\otimes
H $ with  domain $D$ is preclosed, and by Lemma 1.2 in \cite{CKSlur} its closure
$\mathds{1}\overline\otimes x$ is contained in
$S(\Complex\mathds{1}\otimes \M,\kappa)$.

The map $\pi:x\rightarrow \mathds{1}\otimes x$, $x\in\M$, is a unital trace preserving $*$-isomorphism from $\M$ onto the von Neumann subalgebra $\Complex\mathds{1}\otimes\M$. Consequently, $\pi$ extends uniquely to a $*$-isomorphism $\tilde{\pi}$ from $S\Mtau$ onto $S(\Complex\mathds{1}\otimes \M,\kappa)$ \cite{noncomm}.  In fact one can show that $\tilde{\pi}(x)=\mathds{1}\overline\otimes x$.

Since every $*$-homomorphism is an order preserving map, $x\geq 0$ if and only if $\mathds{1}\overline\otimes x\geq 0$, where $x\in S\Mtau$.
 The spectral measure $e^{\tilde{\pi}(x)}$ of $\tilde{\pi}(x)$ is given by $e^{\tilde{\pi}(x)}(B)=\pi (e^x(B))$, that is  $e^{\mathds{1}\overline\otimes x}(B)=\mathds{1}\otimes e^{x}(B)$ for any Borel set $B\subset \mathbb{R}$. Hence $\kappa\left(e^{\mathds{1}\overline\otimes x}(s,\infty)\right)=\kappa\left( \mathds{1}\otimes e^{x}(s,\infty)\right)=\tau(e^x(s,\infty))$ for any $s>0$. Consequently $\tilde{\pi}$ preserves the singular value function in the sense that $\tilde{\mu}(\mathds{1}\overline\otimes x)=\mu(x)$, where $\tilde{\mu}(\mathds{1}\overline\otimes x)$ is the singular value function of $\mathds{1}\overline\otimes x$ computed with respect to the von Neumann algebra $\Complex \mathds{1}\otimes\M$ and the trace $\kappa$.

Thus
\[
\|\tilde{\pi}(x)\|_{E(\Complex \mathds{1}\otimes\M,\kappa)}=\norme{\tilde{\mu}(\mathds{1}\overline\otimes x)}=\norme{\mu(x)}=\normcomm{x},
\]
where 
\begin{align*}
E(\Complex \mathds{1}\otimes\M,\kappa)&=\{\mathds{1}\overline\otimes x\in S(\Complex \mathds{1}\otimes\M,\kappa): \tilde{\mu}(\mathds{1}\overline\otimes x)\in E\}
\\&=\{\mathds{1}\overline\otimes x: x\in S\Mtau\text{ and }\mu(x)\in E\}.
\end{align*}

Hence $\tilde{\pi}$ is a $*$-isomorphism which is also an isometry from $\nonsp$ onto $E(\Complex \mathds{1}\otimes\M,\kappa)$.
 We refer reader to  \cite{CKSlur,noncomm,  St} for details.
\end{remark}

Given a natural number $k$, consider a normed linear space $X$ over the field $\Complex$ of complex numbers whose dimension is at least $k+1$. Denote by $S_X$ and $B_X$ the unit sphere and the unit ball of $X$, respectively.

\begin{definition}
A point $x\in S_X$ is called \textit{$k$-extreme} of the unit ball $B_X$ if $x$ cannot be represented as an average of $k+1$ linearly independent elements from the unit sphere $S_X$. Equivalently, $x$ is $k$-extreme whenever the condition $x=\frac{1}{(k+1)}\sum_{i=1}^{k+1}$,  $x_i\in S_X$ for $i=1,\dots,k+1$, implies that $x_1, x_2,...,x_{k+1}$ are linearly dependent.
Moreover, if every element of the unit sphere $S_X$ is $k$-extreme, then $X$ is called \textit{$k$-rotund}.
\end{definition}

The notion of $k$-extreme points was explicitly  introduced in \cite{ZYD} and applied to theorem on uniqueness of Hahn-Banach extensions. More precisely, Zheng and Ya-Dong  showed there that given at least $k+1$-dimensional normed linear space over the complex field, all bounded linear functionals defined on subspaces of $X$ have at most $k$-linearly
independent norm-preserving linear extensions to $X$ if and only if the conjugate space $X^*$ is $k$-rotund.
In the paper \cite{BFLM} $k$-rotundity and $k$-extreme points found interesting application in studying the structure of nested sequences of balls in Banach spaces.

Clearly, if $X$ is a normed space of a dimension at least $l$, where $l\geq k$, and $x\in S_X$ is a $k$-extreme point of $B_X$, then $x$ is $l$-extreme. Moreover, $1$-extreme points are just extreme points of $B_X$, and so $1$-rotundity of $X$ means rotundity of $X$. 

The simple example below differentiates between $k$-extreme and $k+1$-extreme points.
\begin{example}
Given $k\in\mathbb{N}$, consider the $k+2$ dimensional space $\ell_1^{k+2}$, equipped with $\ell_1$ norm. Let $x=\left(\frac1{k+1},\frac 1{k+1},\dots,\frac 1{k+1},0\right)$. Clearly $x$ is not a $k$-extreme point of $B_{\ell_1^{k+2}}$, since it can be written as an average of $k+1$ linearly independent unit vectors $e_1, e_2, \dots, e_{k+1}$. However, $x$ is $k+1$-extreme point of $B_{\ell_1^{k+2}}$. To see it, let $x=\frac 1{k+2}\sum_{i=1}^{k+2} y_i$,
where $y_i=\left(y_i^{(1)},y_i^{(2)},\dots,y_i^{(k+2)}\right)\in S_{\ell_1^{k+2}}$, $i=1,2,\dots, k+2$. Then
\begin{align*}
k+2&=(k+2)\|x\|_{1}=\sum_{j=1}^{k+2}\sum_{i=1}^{k+2} y_i^{(j)}=\sum_{i=1}^{k+2}\sum_{j=1}^{k+2} y_i^{(j)}\leq \sum_{i=1}^{k+2}\sum_{j=1}^{k+2} \abs{y_i^{(j)}}\\
&=\sum_{i=1}^{k+2}\|y_i\|_1=k+2.
\end{align*}
Therefore ${y_i^{(j)}}=\abs{y_i^{(j)}}$ for ever $i,j=1,2,\dots,k+2$. In particular $y_i^{(k+2)}\geq 0$, $i=1,2,\dots,k+2$, and in view of $\sum_{i=1}^{k+2} y_i^{(k+2)}=(k+2)x^{(k+2)}=0$ it follows that $y_i^{(k+2)}= 0$ for every $i=1,2,\dots,k+2$. Thus the matrix formed by vectors $y_1, y_2,\dots, y_{k+2}$ has determinant equal to zero, since the last row comprises only of zeros. Consequently, $y_1,y_2,\dots, y_{k+2}$ are linearly dependent, and $x$ is $k+1$-extreme.
\end{example}
 We wish to mention here that also the family of Orlicz sequence spaces exposes the difference between $k$-extreme and $k+1$-extreme points \cite{Chen}.

The structure of the paper is as follows.
Section 2 focuses on  $k$-extreme points in symmetric Banach function spaces. A new characterization of $k$-extreme points in a Banach space is given in  Proposition \ref{lm:2}. The main theorem of the section, Theorem \ref{thm:orbitmain}, is the analogous result to Ryff's description of extreme points of orbits \cite{Ryff}. Part 3 considers $k$-extreme points in the noncommutative space $\nonsp$. The main results of this section, Theorem \ref{thm:noncom4} and Theorem \ref{thm:noncom5}, characterize $k$-extreme operators in terms of their singular value functions. They generalize the result in \cite{CKSextreme} proved for extreme points in the case of $k=1$.
The closing corollary of the section relates $k$-rotundity of $E$ with the $k$-rotundity of $\nonsp$. In the last section of the paper we apply the obtained results to characterize $k$-extreme points of orbits $\Omega(g)$ and $\Omega'(g)$. Given $g\in L_1+L_{\infty}$ (resp. $g\in L_1[0,\alpha),\, \alpha<\infty$), we have that $f$ is a $k$-extreme point of its orbit $\Omega(g)$ (resp. $\Omega'(g)$) if and only if $\mu(f)=\mu(g)$ and $\abs{f}\geq \mu(\infty,f)$ (resp. $\mu(f)=\mu(g)$). Therefore we obtain that $k$-extreme points of orbits $\Omega(g)$, and consequently of unit balls of Marcinkiewicz spaces, are non-distinguishable from extreme points. 

\section{$k$-extreme points in symmetric function spaces}
Let us first state an equivalent definition of a $k$-extreme point in a normed space $X$. 
 We will need the following simple observation, included in Lemma 1 \cite{YD}.
\begin{lemma}
\label{lm:1}
If $x_1,x_2,\dots,x_n$ are linearly dependent in $B_X$ and $\|x_1+x_2+\dots+x_n\|=n$, then there are complex numbers $c_1,c_2,\dots,c_n$, not all zero, such that $c_1+c_2+\dots+c_n=0$ and $c_1x_1+c_2x_2+\dots+c_nx_n=0$.
\end{lemma}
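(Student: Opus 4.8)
The plan is to obtain the desired coefficients directly from an arbitrary nontrivial linear dependence relation among the $x_i$, after first producing a norm-one functional that attains the value $1$ simultaneously at every $x_i$.

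First I would record that the hypothesis $\|x_1+\dots+x_n\|=n$ together with $x_i\in B_X$ forces $\|x_i\|=1$ for each $i$, since $n=\|\sum_i x_i\|\le\sum_i\|x_i\|\le n$ makes all the intermediate inequalities equalities. Consequently $x:=\frac1n\sum_{i=1}^n x_i$ lies on $S_X$, and the Hahn--Banach theorem supplies $f\in X^*$ with $\|f\|=1$ and $f(x)=1$, i.e. $\sum_{i=1}^n f(x_i)=n$. Because $\RE f(x_i)\le|f(x_i)|\le\|f\|\,\|x_i\|\le 1$ for every $i$, the identity $\sum_i\RE f(x_i)=n$ forces $\RE f(x_i)=1$ for every $i$; combined with $|f(x_i)|\le 1$ this leaves no room for a nonzero imaginary part, so $f(x_i)=1$ for all $i=1,\dots,n$.

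With this functional in hand the conclusion is immediate. Linear dependence of $x_1,\dots,x_n$ yields scalars $c_1,\dots,c_n\in\Complex$, not all zero, with $\sum_{i=1}^n c_i x_i=0$. Applying $f$ and using $f(x_i)=1$ gives $\sum_{i=1}^n c_i = f\bigl(\sum_i c_i x_i\bigr)=0$. Thus the same coefficients $c_i$ satisfy both required identities, and they are not all zero by construction, which is exactly the assertion.

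I expect no serious obstacle here; the only points needing a moment's care are the passage from $\RE f(x_i)=1$ and $|f(x_i)|\le 1$ to $f(x_i)=1$, and the remark that applying the fixed functional $f$ to a nontrivial dependence relation leaves the coefficients unchanged, so their non-triviality is automatically preserved. (One could instead argue over the real scalar field, but the complex Hahn--Banach argument above is the cleanest route.)
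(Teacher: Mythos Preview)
Your argument is correct: the Hahn--Banach functional attaining $1$ at every $x_i$ immediately converts any nontrivial linear relation $\sum c_i x_i=0$ into the additional constraint $\sum c_i=0$, and the intermediate steps (equality in the triangle inequality forcing $\|x_i\|=1$, and $\RE f(x_i)=1$ with $|f(x_i)|\le 1$ forcing $f(x_i)=1$) are handled cleanly.

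As for comparison, the paper does not supply its own proof of this lemma; it simply quotes the statement from \cite{YD} (Lemma~1 there). Your proof is self-contained and is essentially the standard one, so there is nothing further to reconcile.
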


\begin{proposition}
\label{lm:2}
An element $x\in S_X$ is $k$-extreme of $B_X$ if and only if whenever for the elements $u_i\in X$, $i=1,\dots, k$, the conditions $x+u_i\in B_X$ and $x-\sum_{i=1}^{k} u_i\in B_X$ imply that $u_1,u_2,\dots, u_k$ are linearly dependent.
\end{proposition}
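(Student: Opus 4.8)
The plan is to prove both implications by the obvious substitution that converts an average of $k+1$ unit vectors into a tuple $(u_1,\dots,u_k)$ and back, using Lemma~\ref{lm:1} to control the coefficients.

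First I would prove the forward implication. Assume $x$ is $k$-extreme and suppose that $u_1,\dots,u_k\in X$ satisfy $x+u_i\in B_X$ for $i=1,\dots,k$ and $x-\sum_{i=1}^{k}u_i\in B_X$. Put $x_i=x+u_i$ for $i=1,\dots,k$ and $x_{k+1}=x-\sum_{i=1}^{k}u_i$; then $\frac{1}{k+1}\sum_{i=1}^{k+1}x_i=x$. Since $\|x\|=1$ and $\|x_i\|\le 1$ for every $i$, the triangle inequality forces $\|x_i\|=1$, so each $x_i$ lies on $S_X$. By $k$-extremality of $x$, the vectors $x_1,\dots,x_{k+1}$ are linearly dependent, and since in addition $\|x_1+\dots+x_{k+1}\|=(k+1)\|x\|=k+1$, Lemma~\ref{lm:1} yields scalars $c_1,\dots,c_{k+1}$, not all zero, with $\sum_{i=1}^{k+1}c_i=0$ and $\sum_{i=1}^{k+1}c_ix_i=0$. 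Expanding the last identity and using $\sum c_i=0$ to cancel the terms proportional to $x$ gives $\sum_{i=1}^{k}(c_i-c_{k+1})u_i=0$. If all the coefficients $c_i-c_{k+1}$ were zero, then $\sum_{i=1}^{k+1}c_i=(k+1)c_{k+1}=0$ would force $c_i=0$ for all $i$, a contradiction; hence $u_1,\dots,u_k$ are linearly dependent.

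For the converse, assume the stated condition on tuples $(u_1,\dots,u_k)$ and let $x=\frac{1}{k+1}\sum_{i=1}^{k+1}x_i$ with $x_i\in S_X$. Set $u_i=x_i-x$ for $i=1,\dots,k$; then $x+u_i=x_i\in B_X$ and $x-\sum_{i=1}^{k}u_i=(k+1)x-\sum_{i=1}^{k}x_i=x_{k+1}\in B_X$. By hypothesis there are scalars $d_1,\dots,d_k$, not all zero, with $\sum_{i=1}^{k}d_iu_i=0$, that is, $\sum_{i=1}^{k}d_ix_i=s\,x$ where $s=\sum_{i=1}^{k}d_i$. Substituting $x=\frac{1}{k+1}\sum_{i=1}^{k+1}x_i$ gives $\sum_{i=1}^{k}\bigl(d_i-\tfrac{s}{k+1}\bigr)x_i-\tfrac{s}{k+1}x_{k+1}=0$, a relation that cannot be trivial: otherwise $\tfrac{s}{k+1}=0$, hence $s=0$ and then $d_i=0$ for all $i$. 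Therefore $x_1,\dots,x_{k+1}$ are linearly dependent and $x$ is $k$-extreme.

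The arithmetic is routine; the one point that needs care is the passage, in the forward direction, from mere linear dependence of $x_1,\dots,x_{k+1}$ to a dependence relation whose coefficients sum to zero. This is exactly what Lemma~\ref{lm:1} provides, and it is essential: an arbitrary dependence relation among the $x_i$ need not induce a \emph{nontrivial} relation among the $u_i$ once the $x$-terms are eliminated. I would therefore verify the hypotheses of Lemma~\ref{lm:1} (in particular $\|\sum_i x_i\|=k+1$) explicitly before applying it, and keep careful track of the bookkeeping $c_i-c_{k+1}$ versus $d_i-\tfrac{s}{k+1}$ so that the two final "not all zero" conclusions are clean.
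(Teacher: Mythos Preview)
Your proof is correct and follows essentially the same route as the paper: the forward direction is identical (same substitution $x_i=x+u_i$, $x_{k+1}=x-\sum u_i$, same appeal to Lemma~\ref{lm:1}, same observation that not all $c_i-c_{k+1}$ can vanish), and for the converse you argue directly where the paper argues by contrapositive, but with the same substitution $u_i=x_i-x$ and the same linear-algebra bookkeeping.
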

\begin{proof}
Suppose that $x\in S_X$ is $k$-extreme and $x+u_i\in B_X$ and $x-\sum_{i=1}^{k} u_i\in B_X$, for $u_1,u_2,\dots, u_k\in X$. Set 
\[
y_i=x+u_i,\text{ for }i=1,\dots, k,\quad\text{and}\quad y_{k+1}=x-\sum_{i=1}^{k} u_i.
\]
By the assumption we have $y_i\in B_X$ for $i=1,2,\dots,k+1$, and $\sum_{i=1}^{k+1} y_i=(k+1) x$. Consequently, $y_i\in S_X$, $i=1,2,\dots,k+1$, and since $x$ is $k$-extreme, $y_1,y_2,\dots,y_{k+1}$ are linearly dependent. By Lemma \ref{lm:1}, there exist complex numbers $c_1,c_2,\dots,c_{k+1}$, not all equal to zero, such that $c_1+c_2+\dots+c_{k+1}=0$ and $c_1y_1+c_2y_2+\dots+c_{k+1}y_{k+1}=0$. Therefore  $(c_1-c_{k+1})u_1+(c_2-c_{k+1})u_2+\dots+(c_{k}-c_{k+1})u_{k}=0$ and $u_1,u_2, \dots,u_k$ are linearly dependent, since $c_i-c_{k+1}\neq 0$ for some $i=1,2,\dots,k$.


Suppose now that $x$ is not a $k$-extreme point of $B_X$, that is there exist linearly independent vectors $x_1, x_2, \dots,x_{k+1}$ from the unit sphere $S_X$, such that $x=\frac{1}{(k+1)}\sum_{i=1}^{k+1}x_i$. Define $u_i=x_{i+1}-x$, for $i=1,2,\dots,k$. Note that $u_i\neq 0$ for all $i=1,2,\dots,k$, since $\{x_1,x_2,\dots,x_{k+1}\}$ are linearly independent. Then
$x+u_i=x_{i+1}\in B_X$, $i=1,2,\dots,k$, and $x-\sum_{i=1}^ku_i=(k+1)x-\sum_{i=2}^{k+1} x_i=x_1\in B_X$. Moreover, it is not difficult to see that $u_1, u_2,\dots, u_k$ are linearly independent. Indeed, suppose that 
\[
\lambda_1u_1+\lambda_2u_2+\dots+\lambda_ku_k=0, \text{ for some } \lambda_1,\lambda_2,\dots,\lambda_k\in \Complex.
\]
We have then the following equivalent equalities
\begin{align*}
&\lambda_1x_2+\lambda_2x_3+\dots+\lambda_kx_{k+1}-
(\lambda_1+\lambda_2+\dots+\lambda_k)x=0,\\
&\lambda_1x_2+\dots+\lambda_kx_{k+1}-\frac{1}{k+1}(\lambda_1+\dots+\lambda_k)(x_1+\dots+x_{k+1})=0,\\
&-\frac{1}{k+1}(\lambda_1+\dots+\lambda_k)x_1-\frac{1}{k+1}(-k\lambda_1+\dots+\lambda_k)x_2-\dots\\
&-\frac{1}{k+1}(\lambda_1+\dots-k\lambda_k)x_{k+1}=0.
\end{align*}
Since $x_1,x_2,\dots,x_{k+1}$ are linearly independent it follows that $\lambda_1=\lambda_2=\dots=\lambda_k=0$, and so $u_1,u_2,\dots,u_k$ are linearly independent.
\end{proof}

Our main goal now is to show an analogous theorem on $k$-extreme points as the Ryff's theorem on extreme points \cite{Ryff}. We need first several preliminary results. Let us first note that $E$ stands in this section for a symmetric Banach function space on $[0,\tauone)$, where $\tauone\leq \infty$.
\begin{proposition}
\label{prop:orbit1}
Let $f$ and $g$ be decreasing, right continuous functions from the unit sphere $S_E$. Assume there exist points $0<s_1<s_2<s_3<s_4<\infty$ such that
\begin{align*}
f(s_i)>f(s_{i+1}),\quad &i=1,2,3,\\
\int_0^s g>\int_0^{s_1}f+f(s_1)(s-s_1),\quad&\text{for all }s_1\leq s\leq s_4.
\end{align*}
If $f\prec g$ then $f$ is not a $k$-extreme point, for any $k\in\mathbb{N}$.
\end{proposition}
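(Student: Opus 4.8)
The strategy is to use Proposition~\ref{lm:2}: it is enough to produce, for every $k\in\mathbb N$, linearly independent $u_1,\dots,u_k$ with $f+u_i\in B_E$ and $f-\sum_{i=1}^k u_i\in B_E$. Since $f,g$ are nonnegative and decreasing they coincide with their rearrangements, so $f\prec g$ reads $\int_0^t f\le\int_0^t g$ for all $t$. As $g\in E\subset L_1+L_\infty$ is decreasing, $s\mapsto\int_0^s g$ is finite and continuous on the compact interval $[s_1,s_4]$, so
\[
\eps_0:=\min_{s\in[s_1,s_4]}\Big(\int_0^s g-\int_0^{s_1}f-f(s_1)(s-s_1)\Big)>0,
\]
and, since $\int_0^s f\le\int_0^{s_1}f+f(s_1)(s-s_1)$ for $s\ge s_1$, one obtains the uniform estimate $\int_0^s(g-f)\ge\eps_0$ on $[s_1,s_4]$.

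The heart of the argument is the following claim. Put $\eta_0:=\min\!\big(\tfrac{f(s_1)-f(s_2)}{2},\tfrac{f(s_3)-f(s_4)}{2},\tfrac{\eps_0}{s_3-s_2}\big)$, and let $w$ be any function with $\supp w\subset[s_2,s_3]$, $\int w=0$ and $\|w\|_\infty\le\eta_0$; then $f+w\prec g$, hence (by strong symmetry of $E$ and $\|g\|_E=1$, after the routine check that $f+w\in E$) $f+w\in B_E$. Indeed, on $[s_2,s_3]$ we have $f(s_3)\le f\le f(s_2)$, so $f+w$ takes values in $(f(s_4),f(s_1))$ there, equals $f$ elsewhere, and is $\ge 0$. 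Comparing super-level sets gives $\{f+w>\lambda\}=\{f>\lambda\}$ for every $\lambda\notin(f(s_3)-\eta_0,f(s_2)+\eta_0)$ (above this window the bump cannot reach level $\lambda$; below it both $f$ and $f+w$ exceed $\lambda$ throughout $[s_2,s_3]$). Therefore $d(\lambda,f+w)=d(\lambda,f)$ off that window, which forces $\mu(f+w)=\mu(f)$ on $[0,t_1]\cup[t_2,\infty)$, where $[t_1,t_2]=\{t:\mu(f)(t)\in[f(s_3)-\eta_0,f(s_2)+\eta_0]\}$; the choice of $\eta_0$ and the monotonicity of $f$ force $s_1\le t_1$ and $t_2\le s_4$. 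A short layer-cake computation using $\int w=0$ (and $[s_2,s_3]\subset\{f>\lambda\}$ whenever $\lambda\le f(s_3)-\eta_0$) gives $\int_0^{t_2}\mu(f+w)=\int_0^{t_2}f$. Hence $\int_0^t\mu(f+w)=\int_0^t f\le\int_0^t g$ for all $t\notin(t_1,t_2)$, while for $t\in(t_1,t_2)\subset(s_1,s_4)$,
\[
\int_0^t\mu(f+w)\le\int_0^t\mu(f)+\int_0^t\mu(w)\le\int_0^t f+\|w\|_1\le\int_0^t f+\eps_0\le\int_0^t g,
\]
so $f+w\prec g$.

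Given $k$, the claim closes the proof. Split $[s_2,s_3]$ into $k$ subintervals $I_1,\dots,I_k$, pick nonzero $w_j$ with $\supp w_j\subset I_j$ and $\int w_j=0$, and set $u_i:=cw_i$ with $c>0$ small enough that $c\max_j\|w_j\|_\infty\le\eta_0$. The $u_i$ have pairwise disjoint supports, hence are linearly independent; each satisfies $\|u_i\|_\infty\le\eta_0$, and $-\sum_{i=1}^k u_i$ is again mean zero, supported in $[s_2,s_3]$, with sup-norm $\le\eta_0$. By the claim, $f+u_i\in B_E$ for each $i$ and $f-\sum_{i=1}^k u_i\in B_E$, so Proposition~\ref{lm:2} shows $f$ is not $k$-extreme; as $k$ was arbitrary, $f$ is not $k$-extreme for any $k\in\mathbb N$. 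The genuinely delicate point is the claim: the perturbation must leave $\int_0^t\mu(\cdot)$ unchanged outside $[s_1,s_4]$, where there is no slack, and it is exactly the two ``margin'' hypotheses $f(s_1)>f(s_2)$ and $f(s_3)>f(s_4)$ that confine the rearrangement's motion to $[s_1,s_4]$, where the gap $\eps_0$ then absorbs it.
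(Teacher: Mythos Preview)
Your proof is correct and follows the same overall strategy as the paper: invoke Proposition~\ref{lm:2} by building $k$ linearly independent mean-zero perturbations supported inside $[s_2,s_3]$, and use the two strict drops $f(s_1)>f(s_2)$ and $f(s_3)>f(s_4)$ to confine all rearrangement changes to $[s_1,s_4]$, where the hypothesis provides slack.

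The differences are only technical. The paper writes down explicit step perturbations
\[
u_i=-\epsilon\chi_{(s_2,s_2+\delta/k)}+\epsilon\chi_{(s_3-\frac{i}{k}\delta,\,s_3-\frac{i-1}{k}\delta)},\qquad \epsilon=\tfrac{1}{k}\min\{f(s_1)-f(s_2),f(s_3)-f(s_4)\},\ \delta=\tfrac{s_3-s_2}{2},
\]
and, because these are simple enough that $\mu(f_{k+1})$ can be read off, verifies $f_{k+1}\prec g$ by the direct estimate $\int_0^s\mu(f_{k+1})\le\int_0^{s_1}f+f(s_1)(s-s_1)<\int_0^s g$ on $[s_1,s_4]$, using the hypothesis as stated. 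Your argument abstracts this into a claim about \emph{any} mean-zero $w$ supported in $[s_2,s_3]$ with $\|w\|_\infty\le\eta_0$; the price is that you need the extra bound $\eta_0\le\eps_0/(s_3-s_2)$ and the submajorization inequality $\int_0^t\mu(f+w)\le\int_0^t\mu(f)+\int_0^t\mu(w)$ to handle the middle range, together with the layer-cake identity $\int_0^{t_2}\mu(f+w)=\int_0^{t_2}f$ (which is exactly $\int(f+w-c)_+=\int(f-c)_+$ with $c=f(s_3)-\eta_0$, since $f+w\ge c$ on $[s_2,s_3]$ and $\int w=0$). Both routes are sound; the paper's explicit choice makes the verification shorter, while your formulation makes the role of the hypotheses a bit more transparent and the final construction of the $u_i$ slightly more flexible.
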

\begin{proof}
Suppose that all assumptions of the proposition are satisfied, and let
\[
\epsilon=\frac{1}{k}\min\{f(s_1)-f(s_2),f(s_3)-f(s_4)\}\quad\text{ and }\quad\delta=\frac{1}{2}(s_3-s_2).
\]
Define 
\[
u_i=-\epsilon\chi_{(s_2,s_2+\frac{1}{k}\delta)}+\epsilon\chi_{(s_3-\frac{i}{k}\delta,s_3-\frac{i-1}{k}\delta)},\quad i=1,2,\dots,k.
\]
Set $f_i=f+u_i$, for $i=1,2,\dots,k$ and $f_{k+1}=f-\sum_{i=1}^k u_i$.

Let us show first that $f_{k+1}\prec g$.  Note that
  \[
f_{k+1}=f+k\epsilon\chi_{(s_2,s_2+\frac{1}{k}\delta)}-\epsilon\chi_{(s_3-\delta,s_3)},
\]
and so for $0\leq s\leq s_1$ and $s\geq s_4$, $\mu(s,f_{k+1})=f(s)$. Hence taking $s_1\leq s\leq s_4$ we have that $\mu(s,f_{k+1})\leq f(s_1)$ and
\[
\int_0^s \mu(f_{k+1})=\int_0^{s_1}f+\int_{s_1}^s\mu(f_{k+1})\leq\int_0^{s_1}f+f(s_1)(s-s_1)<\int_0^s g,
\]
\[
\int_{s_1}^{s_4}\mu(f_{k+1})=\int_{s_1}^{s_4}f_{k+1}=\int_{s_1}^{s_4}f+k\epsilon\cdot\frac{1}{k}\delta-\epsilon\delta=\int_{s_1}^{s_4} f.
\]
Combining the previous equality with the fact  $\mu(s,f_{k+1})=f(s)$ for $s\geq s_4$ and $0<s\leq s_1$, it follows that for $s\geq s_4$,
\begin{align*}
\int_0^s \mu(f_{k+1})&=\int_0^{s_1}\mu(f_{k+1})+\int_{s_1}^{s_4}\mu(f_{k+1})
+\int_{s_4}^s\mu(f_{k+1})\\
&=\int_0^{s_4}f+\int_{s_4}^sf=\int_0^s f.
\end{align*}
Therefore $f_{k+1}\prec g$ and consequently $f_{k+1}\in B_E$. Similarly one can show that $f_i\prec g$ and so $f_i\in B_E$ for all $i=1,2,\dots,k$.

Since $u_1,u_2,\dots,u_k$ are linearly independent, by Proposition \ref{lm:2}, $f$ cannot be a $k$-extreme point.
\end{proof}

\begin{lemma}
\label{lm:orbit2}
Let $f$ and $g$ be decreasing, right continuous functions from the unit sphere $S_E$, such that $f\prec g$. If for some $t_0>0$, $f$ is not constant in any of its right neighborhoods, and
\[
\int_0^{t_0}f<\int_0^{t_0} g,
\]
then $f$ is not a $k$-extreme point of $B_E$.
\end{lemma}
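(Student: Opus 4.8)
The plan is to reduce the statement to Proposition~\ref{prop:orbit1}: for a sufficiently small $\delta>0$ I want to exhibit points $t_0\le s_1<s_2<s_3<s_4<t_0+\delta$ at which $f$ strictly decreases and for which $\int_0^s g>\int_0^{s_1}f+f(s_1)(s-s_1)$ holds for all $s\in[s_1,s_4]$; since $f\prec g$ and $f,g\in S_E$ are given, Proposition~\ref{prop:orbit1} then immediately yields that $f$ is not $k$-extreme for any $k$. First I would record two finiteness facts. Since $f$ and $g$ are decreasing and $t_0<\infty$ (and $f,g\in E\subset L_1+L_\infty$), both $\int_0^{t_0}f$ and $\int_0^{t_0}g$ are finite, so $c:=\int_0^{t_0}g-\int_0^{t_0}f$ is a well-defined strictly positive number; moreover $f(t)<\infty$ for all $t>0$, so I may set $M:=f(t_0/2)<\infty$, which dominates $f(s)$ for every $s\ge t_0/2$.

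The step I expect to be the crux is the structural observation that, since $f$ is not constant on any right neighbourhood of $t_0$ and $f$ is decreasing and right-continuous, $f$ takes infinitely many distinct values on every interval $[t_0,t_0+\delta)$, $\delta>0$. Indeed, if $f$ assumed only finitely many values there, the largest would, by monotonicity, be attained on an initial segment $[t_0,a)$, and right-continuity of $f$ at $t_0$ would force $a>t_0$, making $f$ constant on the right neighbourhood $[t_0,a)$ of $t_0$, contrary to hypothesis.

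With this in hand I would finish as follows. Using continuity of $s\mapsto\int_0^s g$ at $t_0$ (the integral of a locally integrable function), fix $\delta\in(0,t_0/2)$ so small that $\int_{t_0}^{t_0+\delta}\abs{g}<c/4$ and $M\delta<c/4$. By the structural observation, choose four distinct values attained by $f$ on $[t_0,t_0+\delta)$ and take preimages; monotonicity of $f$ and strictness of the inequalities between the values order these preimages as $t_0\le s_1<s_2<s_3<s_4<t_0+\delta$, with $f(s_1)>f(s_2)>f(s_3)>f(s_4)$ and $0<s_1$, $s_4<\infty$. Then for $s\in[s_1,s_4]$, so that $t_0\le s_1\le s<t_0+\delta$, the estimates $\int_0^s g>\int_0^{t_0}g-c/4$, $\int_0^{s_1}f=\int_0^{t_0}f+\int_{t_0}^{s_1}f<\int_0^{t_0}f+c/4$ (since $\int_{t_0}^{s_1}f\le M\delta$), and $f(s_1)(s-s_1)\le M\delta<c/4$ (trivially so when $f(s_1)<0$) combine to give $\int_0^s g-\int_0^{s_1}f-f(s_1)(s-s_1)>c-\tfrac{3c}{4}=\tfrac{c}{4}>0$, exactly the inequality demanded by Proposition~\ref{prop:orbit1}. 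The real obstacle is the structural step, extracting four strictly decreasing values of $f$ accumulating at $t_0^+$ from the ``not constant in any right neighbourhood'' hypothesis, where right-continuity is essential; everything afterwards is a routine continuity-of-the-integral estimate, the only subtlety being that the $s_i$ lie to the right of $t_0$, so $\int_0^{s_1}f$ slightly exceeds $\int_0^{t_0}f$ and one needs the extra slack $c/4$ on that term as well.
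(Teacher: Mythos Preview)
Your proposal is correct and follows essentially the same approach as the paper: both proofs extract four points $s_1<s_2<s_3<s_4$ just to the right of $t_0$ where $f$ strictly decreases, and then verify the integral inequality required by Proposition~\ref{prop:orbit1}. The only differences are cosmetic: the paper bounds $f$ near $t_0$ by $f(t_0)$ and works with the single window length $\eta/(2f(t_0))$, whereas you use the uniform bound $M=f(t_0/2)$ together with an absolute-continuity estimate on $\int_{t_0}^{t_0+\delta}|g|$; your version is slightly more explicit about why $f$ must take infinitely many values on every right neighbourhood of $t_0$.
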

\begin{proof}
Let $\eta>0$ be such that
\[
\int_0^{t_0}f+\eta<\int_0^{t_0} g.
\]
Since $f$ has infinitely many values on every right neighborhood of $t_0$, we can chose $t_0<s_1<s_2<s_3<s_4<t_0+\eta/(2f(t_0))$ such that $f(s_1)>f(s_2)>f(s_3)>f(s_4)$.

Since $s_1-t_0<\eta/(2f(t_0))$, taking $s\geq t_0$ we have that
\begin{align*}
\int_0^{s_1} f+\eta&=\int_0^{t_0}f+\eta+\int_{t_0}^{s_1}f
<\int_0^{t_0}g+\int_{t_0}^{s_1}f\\
&\leq \int_0^{t_0}g+f(t_0)(s_1-t_0)\leq \int_0^{t_0}g+\frac{\eta}{2}\leq\int_0^{s}g+\frac{\eta}{2}.
\end{align*}
Consequently for $s\geq t_0$,
\[
\int_0^{s_1}f+\frac{\eta}{2}<\int_0^s g.
\]
 Notice that
\[
0<s_4-s_1\leq\frac{\eta}{2f(t_0)}\leq \frac{\eta}{2f(s_1)}.
\]
Hence for $s_1\leq s\leq s_4$,
\[
\int_0^{s_1}f+f(s_1)(s-s_1)\leq\int_0^{s_1}f+f(s_1)(s_4-s_1)\leq\int_0^{s_1}f+\frac{\eta}{2}<\int_0^s g,
\]
and by Proposition \ref{prop:orbit1}, $f$ cannot be a $k$-extreme point of $B_E$.
\end{proof}

\begin{lemma}
\label{lm:orbit2v2}
Let $f$ and $g$ be decreasing, right continuous functions from the unit sphere $S_E$, such that $f\prec g$. If for some $t_0>0$, $f$ is continuous at $t_0$, not constant in any of its left neighborhoods  and
\[
f(t_0)<g(t_0),
\]
then $f$ is not a $k$-extreme point of $B_E$.
\end{lemma}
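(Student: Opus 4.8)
The statement is essentially a ``left-neighborhood'' companion of Lemma~\ref{lm:orbit2}, so the natural strategy is to reduce it to the situation already handled. The plan is to use the strict inequality $f(t_0)<g(t_0)$ together with continuity and right-continuity of both functions to produce a point $t_1$ slightly to the left of $t_0$ at which the \emph{integrated} strict inequality $\int_0^{t_1}f<\int_0^{t_1}g$ still holds, and at which $f$ is not constant in any right neighborhood; then Lemma~\ref{lm:orbit2} applies verbatim at $t_1$ in place of $t_0$.

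\medskip\noindent\textbf{Key steps.} First I would record that $f\prec g$ gives $\int_0^t f\le \int_0^t g$ for all $t>0$, and that $g(t_0)>f(t_0)$ forces, by right-continuity of $g$ and continuity of $f$ at $t_0$, a whole interval $[t_0,t_0+\rho)$ on which $g>f$ pointwise with a uniform gap; integrating, $\int_0^{t}g-\int_0^{t}f$ is strictly increasing on $[t_0,t_0+\rho)$, but since $\int_0^{t_0}f$ could equal $\int_0^{t_0}g$ this does not yet help to the left. The real point is to move \emph{left}: since $f$ is not constant in any left neighborhood of $t_0$, and $f$ is decreasing, for every $\delta>0$ there is a point of increase of $\int f$ relative to $\int g$ captured by the left-side mass; more precisely I would argue that because $f$ is continuous at $t_0$ and non-constant on every left neighborhood, we may pick $t_1<t_0$ with $f(t_1)>f(t_0)$, arbitrarily close to $t_0$, such that $f$ takes infinitely many values on $(t_1-\text{(small)},t_1)$ as well (any left neighborhood of $t_0$ where $f$ is non-constant must contain points where $f$ is strictly decreasing, hence a subinterval on which $f$ is non-constant in right neighborhoods of some interior point). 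The inequality to secure at $t_1$ is
\[
\int_0^{t_1}f<\int_0^{t_1}g.
\]
This follows by writing $\int_0^{t_0}g-\int_0^{t_0}f\ge \int_{t_1}^{t_0}(g-f)$ is not the useful direction; instead use that $\int_0^{t_0}f\le\int_0^{t_0}g$ and $\int_{t_1}^{t_0}g\ge \int_{t_1}^{t_0}f$ cannot \emph{both} be equalities unless $g=f$ a.e.\ near $t_0$, which is excluded by $g(t_0)>f(t_0)$ and right-continuity (giving $g>f$ a.e.\ on $[t_0,t_0+\rho)$, hence $\int_0^{t_0+\rho}f<\int_0^{t_0+\rho}g$; then I transfer this strict gap leftward by choosing $t_1$ close enough to $t_0$ that the bounded difference $|\int_{t_1}^{t_0+\rho}(g-f)|$ cannot absorb the gap). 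Concretely: fix $\rho$ and the gap $\theta:=\int_0^{t_0+\rho}g-\int_0^{t_0+\rho}f>0$; since $0\le g-f$ is integrable near $t_0$, choose $t_1<t_0$ with $\int_{t_1}^{t_0+\rho}(g-f)<\theta$ and with $f(t_1)>f(t_0)$ (possible as $f$ is non-constant on left neighborhoods and right-continuous); then $\int_0^{t_1}(g-f)=\theta-\int_{t_1}^{t_0+\rho}(g-f)>0$. Finally, shrinking $t_1$ further within a left neighborhood on which $f$ strictly decreases ensures $f$ is not constant in any right neighborhood of $t_1$. Then Lemma~\ref{lm:orbit2} with $t_0$ replaced by $t_1$ yields that $f$ is not $k$-extreme.

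\medskip\noindent\textbf{Main obstacle.} The delicate point is the interplay between the two one-sided regularity hypotheses: we are handed non-constancy of $f$ on \emph{left} neighborhoods of $t_0$ but Lemma~\ref{lm:orbit2} needs non-constancy on a \emph{right} neighborhood of some point. The resolution is that a decreasing function that is non-constant on every left neighborhood of $t_0$ must, on each such neighborhood, genuinely decrease, so it is non-constant on a right neighborhood of some interior point $t_1<t_0$ (if $f$ were constant on a right neighborhood of \emph{every} point of $(t_0-\sigma,t_0)$ it would be locally constant there, hence constant by connectedness, contradicting non-constancy). Bookkeeping the ``close enough to $t_0$'' quantifiers so that simultaneously (a) $f(t_1)>f(t_0)$, (b) $\int_0^{t_1}f<\int_0^{t_1}g$, and (c) $f$ is non-constant on a right neighborhood of $t_1$, is the only real care required; everything else is an appeal to Lemma~\ref{lm:orbit2}.
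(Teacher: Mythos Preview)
Your reduction to Lemma~\ref{lm:orbit2} has a genuine gap. The key claim --- that if $f$ were constant on a right neighborhood of \emph{every} point of $(t_0-\sigma,t_0)$ then $f$ would be locally constant and hence constant by connectedness --- is false. ``Constant on a right neighborhood'' is a one-sided condition and does not yield local constancy. A concrete counterexample: take a right-continuous decreasing step function with jump points $a_n\uparrow t_0$ and values $c_n\downarrow f(t_0)$, say $f=c_n$ on $[a_n,a_{n+1})$. Then $f$ is continuous at $t_0$, is not constant on any left neighborhood of $t_0$, yet is constant on a right neighborhood of \emph{every} $t_1<t_0$. For such an $f$ you cannot locate a point $t_1<t_0$ at which Lemma~\ref{lm:orbit2} applies, so the reduction breaks down.

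The paper avoids this by going straight to Proposition~\ref{prop:orbit1} rather than through Lemma~\ref{lm:orbit2}. Since $g$ is decreasing, $g(t)\ge g(t_0)>f(t_0)$ for $t\le t_0$, and continuity of $f$ at $t_0$ gives a uniform gap $f(t)+\eta_1<g(t)$ on a \emph{left} interval $[t_0-\delta,t_0]$; integrating (and using $f\prec g$ at $t_0-\delta$) yields $\int_0^{t_0}f+\eta<\int_0^{t_0}g$ directly, with no detour through $t_0+\rho$. Then, because $f$ is non-constant on every left neighborhood of $t_0$, one picks $s_1<s_2<s_3<s_4$ in a short left interval $(t_0-C,t_0)$ with $f(s_1)>f(s_2)>f(s_3)>f(s_4)$ --- exactly what the step-function example still allows --- and verifies the linear bound $\int_0^{s_1}f+f(s_1)(s-s_1)<\int_0^s g$ on $[s_1,s_4]$ so that Proposition~\ref{prop:orbit1} applies. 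The point is that Proposition~\ref{prop:orbit1} only needs four points with distinct $f$-values, not non-constancy on arbitrarily small right neighborhoods of a single point; your route through Lemma~\ref{lm:orbit2} imposes the stronger requirement and cannot be met in general.
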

\begin{proof}
By continuity of $f$ at $t_0$ and the assumption that $f(t_0)<g(t_0)$ there exist $\delta,\eta_1>0$ such that $f(t)+\eta_1<g(t)$ for all $t\in[t_0-\delta,t_0]$.
Hence $\int_0^{t_0}f+\eta<\int_0^{t_0}g$, for some $\eta>0$.

Set $C=\min\{\delta,\eta/(2g(t_0-\delta))\}$. Note that $g(t_0-\delta)>0$, and so $C$ is well defined.

The assumptions on the function $f$ ensure that $f$ has infinitely many values on every left neighborhood of $t_0$.  Thus we can find $t_0-C<s_1<s_2<s_3<s_4<t_0$, such that $f(s_1)>f(s_2)>f(s_3)>f(s_4)$.
Note that $t_0-s_1<C$, and so  $t_0-s_1<\eta/(2g(t_0-\delta))$ and $t_0-\delta<s_1$. Thus for all $s\in[s_1,t_0]$,
\begin{align*}
\int_0^{s_1}f+\eta&\leq \int_0^{t_0} f + \eta<\int_0^{t_0}g=\int_0^{s_1}g+\int_{s_1}^{t_0}g\leq \int_0^{s_1}g+g(s_1)(t_0-s_1)\\&\leq\int_0^{s_1}g+g(t_0-\delta)\frac{\eta}{2g(t_0-\delta)}<\int_0^{s_1}g+\frac{\eta}{2}<\int_0^sg +\frac{\eta}{2}.
\end{align*}
Moreover  $0<s_4-s_1<\eta/(2g(t_0-\delta))\leq \eta/(2f(t_0-\delta))\leq\eta/(2f(s_1))$.
Consequently, for all $s_1\leq s\leq t_0$, we have that
\[
\int_0^{s_1}f+f(s_1)(s-s_1)\leq \int_0^{s_1}f+f(s_1)(s_4-s_1)\leq \int_0^{s_1}f+\frac{\eta}{2}<\int_0^s g.
\]
It follows now by Proposition \ref{prop:orbit1} that $f$ is not a $k$-extreme point of $B_E$.

\end{proof}
\begin{theorem}
\label{thm:orbitmain}
Let $E$ be a symmetric Banach function space and $f\in S_E$. Suppose there exists a function $g\in S_E$ such that  $f\prec g$ and $\mu(f)\neq \mu(g)$. Then $\mu(f)$ cannot be a $k$-extreme point of $B_E$.
\end{theorem}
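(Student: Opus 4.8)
The plan is to pass to decreasing rearrangements and then split according to whether $\mu(f)$ is ``genuinely decreasing'' at a useful point or is flat there. Since $f\prec g$ is the same as $\mu(f)\prec\mu(g)$, since $\mu(\mu(f))=\mu(f)$, and since $E$ is symmetric so that $\mu(f),\mu(g)\in S_E$, I may replace $f$ and $g$ by $\mu(f)$ and $\mu(g)$ and assume from the start that $f,g$ are decreasing, right-continuous, lie in $S_E$, satisfy $f\prec g$, and $f\neq g$. Because $t\mapsto\int_0^t f$ and $t\mapsto\int_0^t g$ are continuous with $\int_0^t f\le\int_0^t g$ for all $t$, the alternative $\int_0^t f=\int_0^t g$ for every $t$ would force $f=g$ a.e.\ and hence $f=g$; so the open set $A=\{t>0:\int_0^t f<\int_0^t g\}$ is nonempty. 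Put $\Phi(t)=\int_0^t(g-f)\ge 0$, so that $A=\{\Phi>0\}$.

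If there is a point $t_0\in A$ at which $f$ is not constant on any right-neighborhood, then $\int_0^{t_0}f<\int_0^{t_0}g$ and Lemma~\ref{lm:orbit2} applied at $t_0$ already shows $f$ is not $k$-extreme, and we are done. So assume henceforth that every point of $A$ has a right-neighborhood on which $f$ is constant; fix $t_0\in A$, set $c=f(t_0)$, and let $(a,b)$, $0\le a<t_0<b\le\infty$, be the maximal open interval on which $f\equiv c$. By maximality $f>c$ on $[0,a)$ and, if $b<\infty$, $f(b)<c$. Since $f\equiv c$ on $(a,b)$ we have $\Phi(t)=\Phi(a)+\int_a^t(g-c)$ for $t\in[a,b)$, and as $g$ is decreasing this is a concave function of $t$ on $[a,b)$ (on $[a,\infty)$ when $b=\infty$). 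A nonnegative concave function on an interval that is positive at one interior point is positive on the whole interior; since $\Phi\ge 0$ and $\Phi(t_0)>0$, it follows that $\Phi>0$ on all of $(a,b)$.

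Now I distinguish two sub-cases. If $a>0$, $\Phi(a)=0$, and $f$ attains the level $c$ continuously from the left ($\lim_{t\to a^-}f(t)=c$), then $f$ is continuous at $a$ and, since $f>c$ on $[0,a)$, takes infinitely many values on every left-neighborhood of $a$, so it is not constant there; moreover the concavity of $\Phi$ on $[a,b)$ together with $\Phi(a)=0$ and $\Phi>0$ on $(a,b)$ forces the right derivative $\Phi'_+(a)>0$, i.e.\ $g(a)-f(a)=\Phi'_+(a)>0$, and Lemma~\ref{lm:orbit2v2} applied at $a$ finishes the proof. In every other sub-case I argue directly, imitating Proposition~\ref{prop:orbit1}. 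Fix a small $\sigma>0$ and pairwise disjoint subintervals $(\alpha,\alpha+\sigma),(\beta_1,\beta_1+\sigma),\dots,(\beta_k,\beta_k+\sigma)$ of $(a,b)$, put $u_i=-\epsilon\chi_{(\alpha,\alpha+\sigma)}+\epsilon\chi_{(\beta_i,\beta_i+\sigma)}$ for a small $\epsilon>0$, and set $f_i=f+u_i$ $(i=1,\dots,k)$ and $f_{k+1}=f-\sum_{i=1}^k u_i$; the $u_i$ are linearly independent since their positive parts have pairwise disjoint supports. Each $f_i$ (and $f_{k+1}$) differs from $f$ only inside $(a,b)$, where it takes only the values $c$ and $c\pm\epsilon$ (resp.\ $c$, $c-\epsilon$, $c+k\epsilon$); a direct computation of decreasing rearrangements then shows that $\int_0^t\mu(f_i)-\int_0^t f$ is nonnegative, vanishes for $t$ near $0$, never exceeds $k\epsilon\sigma$, and near the two ends of $(a,b)$ it grows at most linearly (like $\epsilon(t-a)$ just right of some $a'\le a$ with $a'\to a$ as $\epsilon\to0$, and like $\epsilon(b-t)$ just left of $b$ when $b<\infty$); similarly for $f_{k+1}$. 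Using $\Phi>0$ on $(a,b)$ together with the concavity of $\Phi$ on $[a,b)$ — whence $\Phi(t)/(t-a)$ is nonincreasing, $\Phi(t)/(b-t)$ is nondecreasing, and a nonnegative concave function on $[a,\infty)$ is nondecreasing — one checks that, with $\sigma$ fixed and $\epsilon$ then taken small enough, this function is dominated by $\Phi$ pointwise. Hence $\mu(f_i)\prec g$ and $\mu(f_{k+1})\prec g$; strong symmetry of $E$ gives $f_i,f_{k+1}\in B_E$, and Proposition~\ref{lm:2} shows $f$ is not $k$-extreme.

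The main obstacle I expect is the rearrangement estimate in the last sub-case: one must compute $\mu(f_i)$ and $\mu(f_{k+1})$ for the \emph{non-monotone} perturbations and verify that the excess $\int_0^t\mu(f_i)-\int_0^t f$ stays under $\Phi$ near both endpoints of the flat interval $(a,b)$. This is exactly where the concavity of the restriction of $\Phi$ to $[a,b)$ — forced by $f$ being constant on $(a,b)$ — is indispensable, and it is also what makes the separate treatment of the endpoint $a$ via Lemma~\ref{lm:orbit2v2} unavoidable, namely when $f$ reaches the level $c$ continuously from the left, for then the perturbation produces excess at points just to the left of $a$, where $\Phi$ may vanish.
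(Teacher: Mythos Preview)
Your plan is close in spirit to the paper's proof: both reduce to decreasing $f,g$, invoke Lemmas~\ref{lm:orbit2} and~\ref{lm:orbit2v2} for the ``non-flat'' situations, and build explicit mean-zero perturbations on a maximal flat interval otherwise. The paper, however, starts from a point $t_0$ where $f(t_0)<g(t_0)$ (pointwise) and splits into five cases according to whether the endpoints $t_1,t_2$ of the flat interval carry jumps of $f$; it places the perturbations \emph{at the endpoints} so that the rearranged functions are explicit and monotone. You instead work with $A=\{\Phi>0\}$ and exploit that $\Phi(t)=\Phi(a)+\int_a^t(g-c)$ is concave on $[a,b)$; that observation nicely unifies the estimates $\Phi(t)/(t-a)$ nonincreasing and $\Phi(t)/(b-t)$ nondecreasing that you need. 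Both organizations work.

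There is, however, one point you do not address and which is not as innocent as the left endpoint. You carefully split off the case $a>0$, $\Phi(a)=0$, $f$ continuous at $a$ and send it to Lemma~\ref{lm:orbit2v2}; but you do not discuss what happens if $b<\infty$ and $f$ is continuous at $b$. In that situation your ``dip'' $c-\epsilon$ gets pushed, in the rearrangement, to the right of $b$, and a direct computation (take e.g.\ $a=0$, $b=1$, $f=1$ on $[0,1]$, $f$ strictly decreasing and continuous on $[1,\infty)$) gives $\int_0^b\mu(f_i)-\int_0^b f=\epsilon^2/2>0$, independently of how small $\epsilon$ is; so if $\Phi(b)=0$ your domination $\int_0^t\mu(f_i)-\int_0^t f\le\Phi(t)$ would fail at $t=b$. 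The good news is that, under your standing hypothesis ``every point of $A$ has a right-neighborhood on which $f$ is constant'', this bad configuration is \emph{impossible}: one checks (this is exactly the content of the paper's Case~III) that $\Phi>0$ on $(a,b)$ together with $f$ continuous at $b$ forces $\Phi(b)>0$ (if $g>c$ somewhere on $(a,b)$ use $\int_{t_0}^b(g-f)>0$; if $g\le c$ on all of $(a,b)$ then $g$ decreasing and $\Phi(b)\ge0$ force $g\equiv c$ there, whence $\Phi$ is constant and $\Phi(b)=\Phi(t_0)>0$), and then $b\in A$ while $f$ is not constant on any right-neighborhood of $b$, contradicting your hypothesis. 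So in your ``other sub-case'' $f$ automatically has a jump at $b$ whenever $b<\infty$, and your linear estimate $\epsilon(b-t)$ is then correct. You should make this step explicit; without it the rearrangement estimate you flag as ``the main obstacle'' is genuinely incomplete.
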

\begin{proof}
Since the condition $f\prec g$ is equivalent with $\mu(f)\prec \mu(g)$, we can assume that $f=\mu(f)$ and $g=\mu(g)$. Thus $f$ and $g$ are decreasing, right continuous functions. If $f\neq g$, then for at least one value of $t>0$,
\[
\int_0^t f<\int_0^t g.
\]
Then the set $A=\{u>0:\,f(u)<g(u)\}$ contains a point at which $f$ is continuous. Indeed, the condition that $f\neq g$ implies that $A$ is non-empty.  Furthermore, since $g$ is right continuous and $f$ is decreasing, one can show that the set $A$ contains a non-empty interval. Since a decreasing function $f$ has only countably many points of discontinuity, the claim follows. 

  Let $t_0>0$ be such that $f(t_0)<g(t_0)$ and $f$ be continuous at $t_0$. 

\textbf{Case I}
Assume that $t_0\in(t_1,t_2)$, $0\leq t_1<t_2<\infty$, $f=c$ on $[t_1,t_2)$, and $f$ experiences a jump discontinuities greater than $\eta$ at $t_1$ and $t_2$. If $t_1=0$, disregard the discontinuity at $t_1$.
Note that since $f(t_0)<g(t_0)$, we have that $c=f(t)< g(t)$ for all $t\in[t_1,t_0]$.
Let $\alpha=\sup\{t_1\leq t\leq t_2:\,f(t)<g(t)\}.$ Clearly $\alpha>t_0$.

\textbf{ Case I.a} Assume that $\alpha=t_2$ or $g(t_2^-)\geq c$.  Let 
\[
\delta<\frac{t_0-t_1}{2k}\quad\text{and}\quad\epsilon<\frac{1}{k}\min\{g(t_0)-c,\eta\}.
\]
For $i=1,2,\dots,k$, define
\begin{align*}
 u_i&=-\epsilon\chi_{(t_1,t_1+\delta)}+\epsilon\chi_{(t_2-i\delta,t_2-(i-1)\delta)}\\
f_i&=f+u_i\\
f_{k+1}&=f-\sum_{i=1}^k u_i=f+k\epsilon\chi_{(t_1,t_1+\delta)}-\epsilon\chi_{(t_2-k\delta,t_2)}.
\end{align*}
It is clear that $\mu(f_{k+1})=f_{k+1}$. We will show next that  $f_{k+1}\prec g$. For $t\leq t_1$, $\int_0^t f_{k+1}=\int_0^t f\leq \int_0^t g$. Observe that $c=f\chi_{[t_1,t_2)}\leq g\chi_{[t_1,t_2)}$, and so $f_{k+1}\chi_{[t_1,t_2)}\leq g\chi_{[t_1,t_2)}$. Thus taking $t_1<t\leq t_2$,  it follows that $\int_0^t f_{k+1}=\int_0^{t_1} f+\int_{t_1}^t f_{k+1}\leq\int_0^{t_1} f+\int_{t_1}^t g\leq \int_0^t g$. Finally if $t>t_2$, $\int_0^t f_{k+1}=\int_0^t f+k\epsilon\delta-k\epsilon\delta=\int_0^t f\leq \int_0^t g$. 

Note next that $\mu(f_i)=f+\epsilon\chi_{(t_1,t_1+\delta)}-\epsilon\chi_{(t_2-\delta,t_2)}$, for all $i=1,2,\dots,k$. Following the same argument as above, one can show that $f_i\prec g$, $i=1,2,\dots,k$. By linear independence of $u_1,u_2,\dots,u_k$ and by Proposition \ref{lm:2}, $f$ is not a $k$-extreme point of $B_E$.

\textbf{Case I.b} Let now $\alpha=\sup\{t_1\leq t\leq t_2:\,f(t)<g(t)\}<t_2$ and $g(t_2^-)<c$. Then for $\gamma,\beta$ sufficiently small $g(t)<c-\beta$, whenever $t_2-\gamma\leq t<t_2$. Set
\[
\delta<\frac{1}{k}\min\left\{\frac{t_0-t_1}{2},t_2-\alpha,\alpha-t_1,\gamma\right\}\quad\text{and}\quad\epsilon<\frac{1}{k}\min\{g(t_0)-c,\eta,\beta\}.
\]
Define functions $u_i$ and $f_i$ as in Case I.a.

We will show first that $f_{k+1}\prec g$, where obviously $\mu(f_{k+1})=f_{k+1}$. Since for $t\leq t_1$, $f_{k+1}(t)=f(t)$ and for $t_1<t< \alpha$, $f_{k+1}(t)\leq g(t)$, it remains to consider the case of $t\geq\alpha$.
Let $\alpha\leq t\leq t_2$. By $g\chi_{[\alpha,t_2]}\leq f_{k+1}\chi_{[\alpha,t_2]}$ and $\int_0^{t_2}f_{k+1}=\int_0^{t_2}f$, it follows that 
\[
\int_0^tf_{k+1}=\int_0^{t_2}f-\int_t^{t_2}f_{k+1}\leq \int_0^{t_2}f-\int_t^{t_2}g\leq\int_0^{t_2}g-\int_t^{t_2}g=\int_0^t g.
\]
For $t>t_2$, $\int_0^t f_{k+1}=\int_0^t f\leq \int_0^t g$.

Since  $\mu(f_i)=f+\epsilon\chi_{(t_1,t_1+\delta)}-\epsilon\chi_{(t_2-\delta,t_2)}$, for all $i=1,2,\dots,k$, following the similar argument as above it is not difficult to observe that $f_i\prec g$, $i=1,2,\dots,k$. Again by Proposition \ref{lm:2}, $f$ is not a $k$-extreme point of $B_E$.

\textbf{Case II} Suppose now that $t_0\in(t_1,t_2)$, $0<t_1<t_2\leq\infty$, $f=c$ on $[t_1,t_2)$, and $f$ is continuous at $t_1$, where $t_1=\inf\{t:\, f(t)=c\}$. Since $f(t_0)<g(t_0)$, we have that $f(t_1)<g(t_1)$.  By Lemma \ref{lm:orbit2v2} applied to $t_1$ we can conclude now that $f$ is not a $k$-extreme point of $B_E$.

\textbf{Case III} Assume that $t_0\in(t_1,t_2)$, $0\leq t_1<t_2<\infty$, $f=c$ on $[t_1,t_2)$, and $f$ is continuous at $t_2$, where $t_2=\sup\{t:\,f(t)=c\}$. We claim that $\int_0^{t_2}f<\int_0^{t_2} g$ and so by Lemma \ref{lm:orbit2}, $f$ is not $k$-extreme.
Indeed, suppose that $\int_0^{t_2}f=\int_0^{t_2} g$. 

If $g(t_2^-)\geq f(t_2)$, by  the continuity of $f$ at $t_0$, and inequality $f(t_0)<g(t_0)$, we have $\int_{t_0}^{t_2}f<\int_{t_0}^{t_2}g$. Hence
\[
\int_0^{t_0} f=\int_0^{t_2}f-\int_{t_0}^{t_2}f=\int_0^{t_2}g-\int_{t_0}^{t_2}f>\int_0^{t_2}g-\int_{t_0}^{t_2}g=\int_0^{t_0}g,
\]
contradicting the assumption that $f\prec g$.

On the other hand if $g(t_2^-)< f(t_2)$, there exists $\delta>0$ such that $g(t)<f(t)$ for   $t\in(t_2,t_2+\delta)$. Then for $t\in(t_2,t_2+\delta)$,
\[
\int_0^t f=\int_0^{t_2}f+\int_{t_2}^t f=\int_0^{t_2}g+\int_{t_2}^t f>\int_0^{t_2}g+\int_{t_2}^t g=\int_0^{t}g,
\]
which leads to a contradiction. Hence $\int_0^{t_2} f<\int_0^{t_2} g$, where $f$ is not constant in any of  the right neighborhoods of $t_2$. By Lemma \ref{lm:orbit2} it follows that $f$ is not a $k$-extreme point.

\textbf{Case IV} Suppose now that $t_0\in(t_1,\infty)$, where $f=c$ on $[t_1,\infty)$, and $t_1=\inf\{t:\,f(t)=c\}\geq 0$. Moreover, assume that $f$ has a jump discontinuity greater than $\eta$ at $t_1$, if $t_1>0$.
Note first that for $t\geq t_1$,
\begin{equation}
\label{eq:orbit1}
\int_0^{t_1}f+c(t-t_1)=\int_0^t f\leq \int_0^t g,
\end{equation}
and so
\begin{equation}
\label{eq:orbit2}
\frac{\int_0^{t_1}f}{t-t_1}+c\leq\frac{\int_0^t g}{t-t_1}.
\end{equation}
It is clear that $\lim_{t\to\infty} \frac{\int_0^{t_1}f}{t-t_1}=0$. Moreover, by (\ref{eq:orbit1}) it is easy to see that $\lim_{t\to\infty}\int_0^t g=\infty$. Hence $\displaystyle \lim_{t\to\infty}\frac{\int_0^t g}{t-t_1}=\lim_{t\to\infty}g(t)$. Consequently by (\ref{eq:orbit2}), $\lim_{t\to\infty}g(t)\geq c$.
 Set
\[
\epsilon<\min\{\eta, g(t_0)-f(t_0)\}, \quad\text{and}\quad \delta=\frac{t_0-t_1}{k}.
\]  
If $t_1=0$ disregard $\eta$ in the inequality above.
Define the functions\\ $u_i=-\epsilon\chi_{(t_0-i\delta,t_0-(i-1)\delta)}$, $f_i= f+u_i$,  for $i=1,2,\dots,k$, and $f_{k+1}=f-\sum_{i=1}^ku_i$.

Consider first the decreasing function $f_{k+1}=f+\epsilon\chi_{(t_1,t_0)}$. We have that $f_{k+1}(t)=f(t)+\epsilon\chi_{(t_1,t_0)}(t)\leq g(t)$ for all $t\geq t_1$, and $f_{k+1}(t)=f(t)$ for $0<t<t_1$. It is easy to observe now that $f_{k+1}\prec g$.

 Moreover, for $i=1,2,\dots, k$, $\mu(f_i)=f$. In view of Proposition \ref{lm:2}, the claim follows.

\textbf{Case V} Consider now the case when $f$ differs from a constant in every neighbourhood of $t_0$. Since $f$ is continuous at $t_0$ and $f(t_0)<g(t_0)$, Lemma \ref{lm:orbit2v2} ensures now that $f$ is not a $k$-extreme point.
\end{proof}
\begin{corollary}
\label{cor:orbitmain}
 Let $E$ be a symmetric Banach function space and $f\in S_E$. If $\mu(f)$ is a $k$-extreme point of $B_E$ then for all functions $g\in S_E$ with $f\prec g$, it holds that $\mu(f)=\mu(g)$.
\end{corollary}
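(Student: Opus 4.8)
The plan is to read off the corollary as the contrapositive of Theorem \ref{thm:orbitmain}, so there is essentially nothing new to prove. Fix $f\in S_E$ and suppose $\mu(f)$ is a $k$-extreme point of $B_E$. Let $g\in S_E$ be arbitrary with $f\prec g$. I would argue by contradiction: assume $\mu(f)\neq\mu(g)$. Then the pair $f,g$ satisfies exactly the two hypotheses of Theorem \ref{thm:orbitmain} — namely $f\prec g$ and $\mu(f)\neq\mu(g)$ — so the theorem yields that $\mu(f)$ is \emph{not} a $k$-extreme point of $B_E$. This contradicts the standing assumption, hence $\mu(f)=\mu(g)$. Since $g$ was an arbitrary element of $S_E$ with $f\prec g$, the claim follows.

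\textbf{Remarks on minor points.} Two trivial observations ensure the statement is well posed and the application of Theorem \ref{thm:orbitmain} is legitimate: first, $\mu(f)\in S_E$ because $E$ is symmetric and $\norme{f}=1$ gives $\norme{\mu(f)}=\norme{f}=1$, so it makes sense to speak of $k$-extremality of $\mu(f)$; second, the standing hypothesis that $\dim E\ge k+1$ (implicit in the definition of $k$-extreme point) remains in force. No estimates or case analysis are required here — all of the real work, including the delicate Cases I--V distinguishing the local behaviour of the decreasing rearrangement $f=\mu(f)$ near a point $t_0$ with $f(t_0)<g(t_0)$, has already been carried out in the proof of Theorem \ref{thm:orbitmain}. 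Accordingly, the only ``obstacle'' is simply to recognise that there is no obstacle: the corollary is a one-line logical restatement of the theorem.
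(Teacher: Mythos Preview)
Your proposal is correct and matches the paper's approach exactly: the corollary is stated without proof in the paper, being the immediate contrapositive of Theorem~\ref{thm:orbitmain}, and your argument spells out precisely that logical step.
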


We wish to observe that the same characterization of the $k$-extreme points is not valid for symmetric sequence spaces. We are grateful to Timur Oikhberg for bringing it to our attention and providing the following example.

Consider the points $x=(\frac12,\frac12,0)$ and $y=(1,0,0)$ in $\ell_1$. It is easy to verify that $x$ is a $2$-extreme point in $\ell_1$ with $x\prec y$. However $x=\mu(x)\neq \mu(y)=y$.

\section{$k$-extreme points in noncommutative Spaces}

In this section we shall characterize $k$-extreme points of the ball $B_{\nonsp}$ in terms of their singular value functions. Through the effort of the series of technical lemmas we will establish main Theorems \ref{thm:noncom4} and \ref{thm:noncom5}.
\begin{lemma}
\label{lm:noncom0}
Let $\M$ be non-atomic and $x$ be a non-zero, positive element of $S\Mtau$. Then there exist $k$ mutually orthogonal, non-zero projections $p_1,p_2,\dots,p_k$, commuting with $x$ and such that $p_i\leq s(x)$, $i=1,2,\dots,k$.
\end{lemma}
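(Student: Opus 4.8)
The plan is to pass to a spectral band of $x$ on which it is bounded and bounded away from $0$, to produce the desired projections inside that band, and then to verify that they commute with $x$; a short dichotomy takes care of the fact that the abelian algebra generated by $x$ may itself be atomic.

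\textbf{Reduction.} Since $x\ge 0$ and $x\ne 0$, the support $s(x)=e^x(0,\infty)=\sup_n e^x(1/n,n]$ is non-zero, so $q:=e^x(a,b]\ne 0$ for some $0<a<b<\infty$. As $(a,b]\subset(0,\infty)$ we get $q\le s(x)$; being a spectral projection of the self-adjoint operator $x$, $q$ lies in $\M$ and commutes with $x$. Put $y:=qxq=xq$, a positive element of the reduced von Neumann algebra $q\M q$ with $y\ge aq$; hence $y$ is bounded, invertible in $q\M q$, $e^y(\mathbb{R})=q$, and $\sigma(y)\subseteq[a,b]$.

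\textbf{Construction of projections commuting with $y$.} I distinguish two cases according to $\sigma(y)$. If $\sigma(y)$ is finite, then $q=e^y(\mathbb{R})=\sum_j e^y(\{c_j\})$ over the finitely many spectral values $c_j$, so some $e:=e^y(\{c_j\})\ne 0$; then $ye=c_je=ey$, i.e. $y$ acts as the scalar $c_j$ on $e(H)$, so every subprojection of $e$ commutes with $y$. Since $\M$ is non-atomic and $e\ne 0$, by repeatedly splitting off proper non-zero subprojections one obtains $k$ mutually orthogonal non-zero projections $p_1,\dots,p_k\le e$, each commuting with $y$. If $\sigma(y)$ is infinite, choose distinct points $\lambda_1,\dots,\lambda_k\in\sigma(y)$ and pairwise disjoint open intervals $J_i\ni\lambda_i$ with $0\notin J_i$, and set $p_i:=e^y(J_i)$. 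Each $p_i$ is non-zero because $J_i$ is a neighbourhood of a point of $\sigma(y)$; the $p_i$ are mutually orthogonal, $p_i\le e^y((0,\infty))=q$, and each $p_i$ commutes with $y$ as a spectral projection of $y$.

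\textbf{Passing back to $x$ and the main obstacle.} In either case the $p_i$ are mutually orthogonal, non-zero, satisfy $p_i\le q\le s(x)$, and commute with $y=qxq$. Since $q$ commutes with $x$ we have $x=qxq+(1-q)x(1-q)$, and $p_i\le q$ gives $(1-q)p_i=p_i(1-q)=0$; hence $xp_i=(qxq)p_i=yp_i=p_iy=p_i(qxq)=p_ix$, so each $p_i$ commutes with $x$, which completes the proof. The subtle point, and the reason for the case split, is that although $\M$ has no atoms the abelian von Neumann algebra generated by the spectral projections of $x$ may well be atomic — for instance when $x$ has finite spectrum — so one cannot simply cut a single spectral projection $e^x(a,b]$ into $k$ pieces and hope they still commute with $x$. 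The resolution uses that on an eigenprojection $x$ is a scalar (so all of its subprojections survive), whereas if $x$ has infinitely many spectral values on $(a,b]$ then genuinely distinct spectral projections of $x$ are already available; non-atomicity of $\M$ is needed only to split the eigenprojection $e$ in the first case.
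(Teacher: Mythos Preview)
Your proof is correct and follows essentially the same strategy as the paper's: both split into the case where enough distinct spectral projections of $x$ are available versus the case where one must sit inside a single eigenprojection and invoke non-atomicity of $\M$ to cut it into $k$ pieces. The only cosmetic difference is that the paper organizes the dichotomy via the number of values of the singular value function $\mu(x)$ (taking $p_i=e^x(\mu(\lambda_{i+1},x),\mu(\lambda_i,x)]$ in the rich case and splitting $e^x[\mu(0,x),\infty)$ otherwise), whereas you first localize to a bounded spectral band $q=e^x(a,b]$ and argue directly with $\sigma(y)$; your route is slightly more self-contained since it never uses $\mu$.
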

\begin{proof}
Suppose first that $\mu(x)$ admits at least $k+1$ different values. Choose $0<\lambda_1<\lambda_2<\dots<\lambda_{k+1}$ such that $\mu(\lambda_1,x)>\mu(\lambda_2,x)>\dots>\mu(\lambda_{k+1},x)\geq\mu(\infty,x)$. Since $\tau(e^x(\lambda,\infty))<\infty$ for all $\lambda> \mu(\infty,x)$, we have that
\[
\tau\left(e^x\left(\mu(\lambda_{i+1},x),\mu(\lambda_{i},x)\right]\right)=m\{t:\,\mu(\lambda_{i+1},x)<\mu(t,x)\leq \mu(\lambda_i,x)\}>0,
\]
$i=1,2,\dots, k$.
Hence $0\neq p_i=e^x\left(\mu(\lambda_{i+1},x),\mu(\lambda_{i},x)\right]\leq e^x(0,\infty)=s(x)$, $i=1,2,\dots,k$. Clearly $p_1,p_2,\dots,p_k$ are mutually orthogonal projections commuting with $x$.

Assume now that $\mu(x)$ has less than $k+1$ different values. Then $\mu(0,x)<\infty$ is the biggest value of $\mu(x)$. Moreover $\tau\left(e^x[\mu(0,x),\infty)\right)=m\{t:\,\mu(t,x)\geq\mu(0,x)\}>0$, if $\mu(0,x)>\mu(\infty,x)$ or $e^x[\mu(0,x),\infty)=\one$, if $\mu(0,x)=\mu(\infty,x)$. In either case $e^x[\mu(0,x),\infty)$ is a non-zero projection less than $s(x)=e^x(0,\infty)$.

Since $\M$ is non-atomic, we can find $k$ mutually orthogonal, non-zero projections $p_1,p_2,\dots,p_{k}$ such that $p_i\leq e^x[\mu(0,x),\infty)$, $i=1,2,\dots,k$. We claim that $p_i$, $i=1,2,\dots,k$, commute with all spectral projections of the form $e^x(s,\infty)$, $s>0$. Indeed, if $s\geq\mu(0,x)$, then $e^x(s,\infty)=0$. For $s<\mu(0,x)$, $e^x[\mu(0,x),\infty)\leq e^x(s,\infty)$ and so $p_i\leq e^x(s,\infty)$, $i=1,2,\dots,k$. Thus if $s<\mu(0,x)$, $p_ie^x(s,\infty)=p_i=e^x(s,\infty)p_i$, $i=1,2,\dots,k$. Proposition \ref{prop:commute} implies now that all projections $p_1,p_2,\dots,p_k$ commute with $x$.
\end{proof}

\begin{lemma}
\label{lm:noncom2}
Let $\M$ be a non-atomic von Neumann algebra. If $x$ is a $k$-extreme point of the unit ball $\emph{B}_{\nonsp}$ then   $\mu(\infty,x)=0$ or $n(x)\mathcal{M}n(x^*)=0$.
\end{lemma}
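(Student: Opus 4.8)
The plan is to prove the contrapositive. Assume $\mu(\infty,x)\neq 0$ and $n(x)\M n(x^*)\neq 0$, and set $c=\mu(\infty,x)>0$. I will construct $k$ linearly independent elements $u_1,\dots,u_k\in\nonsp$ with $x+u_i\in B_{\nonsp}$ for each $i$ and $x-\sum_{i=1}^{k}u_i\in B_{\nonsp}$; by \propref{lm:2} this shows that $x$ is not $k$-extreme.

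First I would extract a partial isometry: choosing $y\in\M$ with $n(x)yn(x^*)\neq 0$ and taking $w\in\M$ to be the partial isometry in the polar decomposition of $n(x)yn(x^*)$, one gets $w\neq 0$, $ww^*\leq n(x)$ and $w^*w\leq n(x^*)$. This is the point at which non-atomicity of $\M$ is used: the corner $(ww^*)\M(ww^*)$ is then non-atomic, so the projection $ww^*$ splits as $e_1+\dots+e_k$ with the $e_i$ nonzero and mutually orthogonal. Put $w_i:=e_iw$ and $f_i:=w_i^{*}w_i=w^{*}e_iw$; each $w_i$ is a nonzero partial isometry with $w_iw_i^{*}=e_i\leq n(x)$, the $f_i$ are mutually orthogonal projections lying under $n(x^*)$, and $\sum_iw_i=w$. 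Now fix any $\epsilon$ with $0<\epsilon\leq c$ and define $u_i:=\epsilon\,w_i^{*}$ for $i=1,\dots,k$.

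The decisive observation is to perturb by the \emph{adjoint} partial isometries $w_i^{*}$. Since $\mathrm{ran}\,f_i\subseteq\ker x^{*}$, one has $f_ix=0$ and $x^{*}f_i=0$, hence $w_ix=w_if_ix=0$ and $x^{*}w_i^{*}=x^{*}f_iw_i^{*}=0$, so every cross term vanishes and $|x+\epsilon w_i^{*}|^{2}=(x+\epsilon w_i^{*})^{*}(x+\epsilon w_i^{*})=|x|^{2}+\epsilon^{2}e_i$. Because $e_i\leq n(x)=e^{|x|}\{0\}$ annihilates $|x|$ (that is, $|x|e_i=e_i|x|=0$), this forces $|x+\epsilon w_i^{*}|=|x|+\epsilon e_i$, and then Corollary~\ref{cor:1}, applied to the operator $|x|$, the projection $e_i$, and the scalar $\epsilon\leq c=\mu(\infty,|x|)$, gives $\mu(x+\epsilon w_i^{*})=\mu(|x|+\epsilon e_i)=\mu(|x|)=\mu(x)$; hence $\normcomm{x+u_i}=\norme{\mu(x)}=1$ and $x+u_i\in B_{\nonsp}$. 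The same computation applied with $w=\sum_iw_i$ and $ww^{*}=\sum_ie_i\leq n(x)$ in place of a single $w_i$ yields $|x-\epsilon w^{*}|=|x|+\epsilon\,ww^{*}$ and $\mu\bigl(x-\sum_iu_i\bigr)=\mu(x)$, so $x-\sum_iu_i\in B_{\nonsp}$ too. Linear independence of $u_1,\dots,u_k$ is immediate: from $\sum_i\lambda_iw_i^{*}=0$, right multiplication by $w_j$ together with $w_i^{*}w_j=w^{*}e_ie_jw=0$ for $i\neq j$ gives $\lambda_jf_j=0$, hence $\lambda_j=0$.

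I expect the only genuinely delicate point to be the choice of the perturbation: one must perturb by $w_i^{*}$ rather than $w_i$, so that the initial space of the perturbation sits inside $\ker x^{*}$ and its range inside $\ker x$; this is exactly what annihilates the cross terms and realizes $|x\pm\epsilon w^{*}|$ as $|x|$ plus a scalar multiple of a projection below $n(x)$, after which the singular value function does not move at all, by Corollary~\ref{cor:1} (so the perturbed operators in fact lie on $S_{\nonsp}$). The hypothesis $\mu(\infty,x)\neq 0$ is indispensable here precisely because it supplies a \emph{strictly positive} admissible value $\epsilon\leq\mu(\infty,x)$, making each $u_i$ nonzero; non-atomicity is used only to split $w$ into $k$ independent pieces.
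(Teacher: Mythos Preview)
Your proof is correct. The paper proceeds differently: rather than perturbing $x$ directly, it first invokes an external result (Lemma~2.6 in \cite{moj}) to produce, under the hypothesis $n(x)\M n(x^*)\neq 0$, an isometry $w$ with $x=w|x|$, and then transfers $k$-extremality from $x$ to $|x|$ (via the observation that $w$ being an isometry makes $wu_1,\dots,wu_k$ linearly independent whenever $u_1,\dots,u_k$ are). Working with the positive operator $|x|$, the paper then simply chooses $k$ mutually orthogonal nonzero projections $p_1,\dots,p_k\leq n(x)$ (using non-atomicity and $n(x)\neq 0$) and sets $u_i=\mu(\infty,x)p_i$; Corollary~\ref{cor:1} gives $\mu(|x|+\mu(\infty,x)p_i)=\mu(x)$ and $\mu(|x|-\sum_i\mu(\infty,x)p_i)=\mu(x)$ at once, with no absolute-value computation needed. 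Your route avoids the external isometry lemma entirely by building the perturbations of $x$ itself from partial isometries $w_i^{*}$ that send a piece of $\ker x^{*}$ into $\ker x$; the identity $|x+\epsilon w_i^{*}|=|x|+\epsilon e_i$ then reduces the norm estimate to the very same application of Corollary~\ref{cor:1}. The paper's argument is a bit shorter once the isometry is granted and has the conceptual advantage of reducing immediately to the positive case; yours is more self-contained and makes explicit how the hypothesis $n(x)\M n(x^*)\neq 0$ furnishes the perturbing directions.
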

\begin{proof}
Assume for a contrary that $n(x)\mathcal{M}n(x^*)\neq 0$ and
$\mu(\infty,x)>0$, while $x$ is a $k$-extreme point. As shown in Lemma 2.6 \cite{moj}, if $n(x)\mathcal{M}n(x^*)\neq 0$, there exists an isometry $w$ such that $x=w\abs{x}$.

It is easy to show now that if $x$ is $k$-extreme then so is $\abs{x}$. Indeed, let
$\abs{x}+u_i\in B_{\nonsp}$, for $i=1,2,\dots,k$ and $\abs{x}-\sum_{i=1}^k u_i\in B_{\nonsp}$.  Then $x+wu_i\in B_{\nonsp}$, $i=1,2,\dots,k$ and $x-\sum_{i=1}^k wu_i\in B_{\nonsp}$, and since $x$ is $k$-extreme $wu_1,wu_2,\dots, wu_k$ are linearly dependent. In view of  $w$ being an isometry, $u_1,u_2,\dots, u_k$ are linearly dependent, proving that $\abs{x}$ is $k$-extreme.

Since $n(x)\neq 0$ and $\M$ is non-atomic, there exist $k$ mutually orthogonal, non-zero projections, $p_1,p_2,\dots, p_k$, such that $p_i\leq n(x)$, $i=1,2,\dots, k$.
By Corollary \ref{cor:1},  $\mu(\abs{x}+\mu(\infty,x)p_i)=\mu(x)$, and $\mu(\abs{x}-\sum_{i=1}^k \mu(\infty,x)p_i)=\mu(\abs{x}+\sum_{i=1}^k \mu(\infty,x)p_i)=\mu(x)$. Clearly the set 
\[\{\mu(\infty,x)p_1,\mu(\infty,x)p_2,\dots,\mu(\infty,x)p_k\}
\] is linearly independent,  which is impossible since  $\abs{x}$ is $k$-extreme.
\end{proof}
\begin{lemma}
\label{lm:noncom3}
Let $\M$ be a non-atomic von Neumann algebra. If $x$ is a $k$-extreme point of $B_{\nonsp}$ then $\abs{x}\geq \mu(\infty,x)s(x)$.
\end{lemma}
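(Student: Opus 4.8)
The plan is to argue by contradiction using \propref{lm:2}: assuming $x$ is $k$-extreme while the inequality $\abs{x}\geq\mu(\infty,x)s(x)$ fails, I will produce linearly independent $u_{1},\dots,u_{k}\in\nonsp$ with $x+u_{i}\in B_{\nonsp}$ and $x-\sum_{i=1}^{k}u_{i}\in B_{\nonsp}$, which contradicts $k$-extremality. If $\mu(\infty,x)=0$ the inequality is trivial, so set $c:=\mu(\infty,x)>0$; then $\abs{x}\geq c\,s(x)$ fails exactly when $q:=e^{\abs{x}}(0,c)\neq 0$, which I assume. The key structural fact is that, since $\mu(x)$ is decreasing with limit $c$, one has $\mu(t,x)\geq c$ for every $t\geq 0$, and consequently $\tau(e^{\abs{x}}(\lambda,\infty))=\infty$ for all $0\leq\lambda<c$.

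First I prove a reduction lemma: if $g\leq q$ is a projection commuting with $\abs{x}$ and $\tau(g)<\infty$, and $h=h^{*}$ satisfies $\supp h\leq g$, $\abs{x}+h\geq 0$, and the part of $\abs{x}+h$ acting on $gH$ has spectrum in $[0,c)$, then $\mu(\abs{x}+h)=\mu(\abs{x})$. Indeed, $\abs{x}+h$ decomposes relative to $g$ as $\abs{x}(\one-g)$ on $(\one-g)H$ and $\abs{x}g+h$ on $gH$. For $\lambda\geq c$ the summand on $gH$ contributes nothing and $g\perp e^{\abs{x}}(\lambda,\infty)$, so $\tau(e^{\abs{x}+h}(\lambda,\infty))=\tau(e^{\abs{x}}(\lambda,\infty))$; for $0\leq\lambda<c$ one gets $\tau(e^{\abs{x}+h}(\lambda,\infty))\geq\tau((\one-g)e^{\abs{x}}(\lambda,\infty))\geq\tau(e^{\abs{x}}(\lambda,\infty))-\tau(g)=\infty$. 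Thus $\abs{x}+h$ and $\abs{x}$ have the same distribution function, hence $\mu(\abs{x}+h)=\mu(\abs{x})$; note $\tau(g)<\infty$ is used essentially here.

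Next I construct, using $q\neq 0$, non-atomicity of $\M$, and semifiniteness of $\tau$, mutually orthogonal nonzero projections $e_{1},\dots,e_{k}\leq q$, each commuting with $\abs{x}$, with $\tau(e_{i})<\infty$, and common constants $0<\delta\leq c'<c$ such that $\delta e_{i}\leq\abs{x}e_{i}\leq c'e_{i}$. For this I examine the measure $B\mapsto\tau(e^{\abs{x}}(B))$ on $(0,c)$: if its diffuse part is nonzero, some compact subinterval $[\delta,c']\subset(0,c)$ carries positive finite diffuse mass and splits into $k$ Borel pieces of equal positive measure, whose spectral projections give the $e_{i}$; if the measure is purely atomic, then $e^{\abs{x}}\{\lambda_{1}\}\neq 0$ for some $\lambda_{1}\in(0,c)$, $\abs{x}$ is the scalar $\lambda_{1}$ on its range, and a finite-trace subprojection splits by non-atomicity into the required $k$ pieces.

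Finally, take the polar decomposition $x=u\abs{x}$ (so $u^{*}u=s(x)$), fix $0<t<\frac{1}{k}\min\{\delta,\,c-c'\}$, and set $u_{i}:=t\,u\,e_{i}$. Since $\abs{x}+te_{i}\geq 0$ has support under $s(x)=u^{*}u$, one computes $\abs{x+u_{i}}=\abs{x}+te_{i}$, so $\mu(x+u_{i})=\mu(\abs{x}+te_{i})$; as $\abs{x}+te_{i}$ has spectrum in $[\delta+t,c'+t]\subset(0,c)$ on $e_{i}H$, the reduction lemma gives $\mu(x+u_{i})=\mu(\abs{x})=\mu(x)$, whence $x+u_{i}\in B_{\nonsp}$. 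The same computation applied to $x-\sum_{i=1}^{k}u_{i}=u(\abs{x}-t\sum_{i}e_{i})$, whose part on $(\sum_{i}e_{i})H$ has spectrum in $[\delta-t,c'-t]\subset(0,c)$, yields $x-\sum_{i}u_{i}\in B_{\nonsp}$; and $u_{1},\dots,u_{k}$ are linearly independent (apply $u^{*}$ and use $u^{*}u\geq e_{i}$ together with the orthogonality of the $e_{i}$). By \propref{lm:2} this contradicts $k$-extremality of $x$, so $q=0$, i.e.\ $\abs{x}\geq\mu(\infty,x)s(x)$. I expect the main obstacle to be the construction of the commuting finite-trace projections $e_{i}$ with spectrum bounded away from both $0$ and $c$ — the finiteness of $\tau(e_{i})$ being precisely what the reduction lemma requires.
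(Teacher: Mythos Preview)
Your overall strategy is correct and takes a somewhat different route from the paper's. The paper perturbs by $-\epsilon\abs{x}p_i$ rather than by a constant multiple of $e_i$; it gets $z_i=\abs{x}-\epsilon\abs{x}p_i\leq\abs{x}$ into the ball by monotonicity, and for $z_{k+1}=\abs{x}+\epsilon\abs{x}\sum_i p_i$ it invokes an external result (Lemma~3.8 of \cite{moj}) rather than computing the distribution function. Your reduction lemma is sharper and more self-contained: it shows the perturbed operator has \emph{the same} singular value function, not merely one dominated by $\mu(x)$.

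There is, however, a gap exactly where you anticipated it. In the diffuse case you assert that ``some compact subinterval $[\delta,c']\subset(0,c)$ carries positive finite diffuse mass,'' but the Borel measure $\nu(B)=\tau\bigl(e^{\abs{x}}(B)\bigr)$ on $(0,c)$ need not be semifinite. For instance, take $\M=L_\infty\bigl((0,c)\times[0,\infty)\bigr)$ with Lebesgue trace and $\abs{x}(s,t)=s$; then $\mu(t,x)\equiv c$, $e^{\abs{x}}(0,c)=\one$, yet $\nu(B)=\infty$ whenever $m(B)>0$ and $\nu(B)=0$ otherwise, so no Borel set carries positive finite mass and your splitting into $k$ pieces of equal finite positive measure is unavailable.

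The repair is that the hypothesis $\tau(g)<\infty$ in your reduction lemma is \emph{not} actually needed. If instead you assume only $g\leq e^{\abs{x}}[\delta,c']$ for some $c'<c$ (which holds in your application anyway), then for $0\leq\lambda<c$ the projection $(\one-g)e^{\abs{x}}(\lambda,\infty)$ dominates $e^{\abs{x}}\bigl(\max(\lambda,c'),\infty\bigr)$, whose trace is infinite since $\max(\lambda,c')<c$; the rest of the argument is unchanged. With this modification you need only produce $k$ nonzero mutually orthogonal projections $e_i\leq e^{\abs{x}}[\delta,c']$ commuting with $\abs{x}$, with no trace constraint. This is precisely what \lemref{lm:noncom0} supplies when applied to $\abs{x}e^{\abs{x}}[\delta,c']$, and it is also how the paper obtains its projections.
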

\begin{proof}
Suppose that $\mu(\infty,x)>0$ and $e^{\abs{x}}(0,\mu(\infty,x))\neq 0$. We have that $\abs{x}e^{\abs{x}}(0,\mu(\infty,x))\neq 0$, since $e^{\abs{x}}(0,\mu(\infty,x))\leq s(x)=e^{\abs{x}}(0,\infty)$. 

Choose $0<\epsilon<1$ such that $e^{\abs{x}}(0,\beta]\neq 0$, where $\beta=\frac{1}{1+\epsilon}\mu(\infty,x)$. Such $\epsilon$ must exist, since  otherwise $e^{\abs{x}}(0,\mu(\infty,x))=0$. 

By Lemma \ref{lm:noncom0} applied to $\abs{x}e^{\abs x}(0,\beta]$, we can find $k$ non-zero, mutually orthogonal projections $\{p_1,p_2,\dots,p_k\}$, $p_i\leq e^{\abs{x}}(0,\beta]$ and commuting with $\abs{x}e^{\abs{x}}(0,\beta]$. Consequently $p_i$ commute with $\abs{x}$ for all $i=1,2,\dots, k$. Indeed, since $p_ie^{\abs{x}}(\beta,\infty)=e^{\abs{x}}(\beta,\infty)p_i=0$ and $p_i$ commutes with $\abs{x}e^{\abs{x}}[0,\beta]$, we have that for all $i=1,2,\dots, k$,
\begin{align*}
\abs{x}p_i&=\abs{x}e^{\abs{x}}\{0\}p_i+\abs{x}e^{\abs{x}}(0,\beta]p_i+\abs{x}e^{\abs{x}}(\beta,\infty)p_i=\abs{x}e^{\abs{x}}(0,\beta]p_i\\
&=p_i\abs{x}e^{\abs{x}}\{0\}+p_i\abs{x}e^{\abs{x}}(0,\beta]+p_ie^{\abs{x}}(\beta,\infty)\abs{x}\\
&=p_i\abs{x}e^{\abs{x}}\{0\}+p_i\abs{x}e^{\abs{x}}(0,\beta]+p_i\abs{x}e^{\abs{x}}(\beta,\infty)=p_i\abs{x}.
\end{align*}

 Define $u_i=-\epsilon \abs{x}p_i$,  $z_i=\abs{x}+u_i=\abs{x}-\epsilon \abs{x}p_i$, $i=1,2,\dots,k$, and $z_{k+1}=\abs{x}-\sum_{i=1}^ku_i=\abs{x}+\epsilon\abs{x}\sum_{i=1}^kp_i$. For $i=1,2,\dots,k$, $0\leq \abs{x}-\abs{x}p_i\leq z_i\leq\abs{x}$, and so $z_i\in B_{\nonsp}$. Furthermore, it was shown in Lemma 3.8 \cite{moj}, that $\abs{x}+\epsilon\abs{x}e^{\abs{x}}(0,\beta]\in B_{\nonsp}$.  Since $z_{k+1}\leq \abs{x}+\epsilon\abs{x}e^{\abs{x}}(0,\beta]$ it follows that also $z_{k+1}\in B_{\nonsp}$.

Let $x=u\abs{x}$ be the polar decomposition of $x$.  Therefore $x-\epsilon xp_i=uz_i\in B_{\nonsp}$, for $i=1,2,\dots,k$ and $x+\epsilon x\sum_{i=1}^k p_i=uz_{k+1}\in B_{\nonsp}$. Note that for any non-zero projection $q\leq e^{\abs{x}}(0,\beta]\leq s(x)$ we have that $xq\neq 0$. Therefore the collection $\{-\epsilon xp_i\}_{i=1}^k$ is linearly independent, and $x$ cannot be a $k$-extreme point.

Therefore $e^{\abs{x}}(0,\beta]=0$ for all $\beta<\mu(\infty,x)$. Hence $e^{\abs{x}}(0,\mu(\infty,x))=0$ and $s(x)= e^{\abs{x}}(0,\infty)=e^{\abs{x}}[\mu(\infty,x),\infty)$. Consequently,
\[
\abs{x}=\int_{[\mu(\infty,x),\infty)}\lambda de^{\abs{x}}(\lambda)\geq\mu(\infty,x)e^{\abs{x}}[\mu(\infty,x),\infty)=\mu(\infty,x)s(x).
\]

\end{proof}
The inequality  $\abs{x}\geq\mu(\infty,x)s(x)$ can be expressed in an equivalent way as follows.

\begin{lemma}
\label{lm:cond}
Let $x\in S\Mtau$ and $\mu(\infty,x)>0$. Then the conditions  $\abs{x}\geq\mu(\infty,x)s(x)$ and $\displaystyle e^{\abs{x}}(0,\mu(\infty,x))=0$ are equivalent.
\end{lemma}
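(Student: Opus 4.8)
The plan is to unwind both conditions via the spectral decomposition of $\abs{x}$. Write $m:=\mu(\infty,x)>0$ and recall that $s(x)=e^{\abs{x}}(0,\infty)$, so $e^{\abs{x}}$ is supported on $\{0\}\cup(0,\infty)$ and $\abs{x}=\int_{(0,\infty)}\lambda\,de^{\abs{x}}(\lambda)$.

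First I would prove $e^{\abs{x}}(0,m)=0\Rightarrow\abs{x}\geq m\,s(x)$. If $e^{\abs{x}}(0,m)=0$ then $e^{\abs{x}}$ is supported on $\{0\}\cup[m,\infty)$, whence $s(x)=e^{\abs{x}}(0,\infty)=e^{\abs{x}}[m,\infty)$ and
\[
\abs{x}=\int_{[m,\infty)}\lambda\,de^{\abs{x}}(\lambda)\geq m\int_{[m,\infty)}de^{\abs{x}}(\lambda)=m\,e^{\abs{x}}[m,\infty)=m\,s(x).
\]
This is the same computation already performed at the end of the proof of \lemref{lm:noncom3}.

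For the converse I would argue by contradiction: suppose $\abs{x}\geq m\,s(x)$ but $p:=e^{\abs{x}}(0,m)\neq0$. Since $(0,m)=\bigcup_{0<\delta<m/2}[\delta,m-\delta]$, the spectral projections $e^{\abs{x}}[\delta,m-\delta]$ increase to $p$ as $\delta\downarrow0$, so there is $\delta>0$ with $q:=e^{\abs{x}}[\delta,m-\delta]\neq0$; note $q\leq p\leq s(x)$ and $q$ commutes with $\abs{x}$, so $\abs{x}q=q\abs{x}q$. On the one hand $\abs{x}q=\int_{[\delta,m-\delta]}\lambda\,de^{\abs{x}}(\lambda)\leq(m-\delta)q$. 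On the other hand, conjugating $\abs{x}-m\,s(x)\geq0$ by $q$ and using $q\,s(x)\,q=q$ (as $q\leq s(x)$) gives $q\abs{x}q\geq mq$. Hence $mq\leq q\abs{x}q\leq(m-\delta)q$, so $\delta q\leq0$, contradicting that $q$ is a non-zero projection.

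Since the only way $\abs{x}\geq m\,s(x)$ can fail in the presence of $e^{\abs{x}}(0,m)\neq0$ is through spectral mass lying strictly between $0$ and $m$, the argument is essentially routine; the one mild point is to extract from $e^{\abs{x}}(0,m)\neq0$ a non-zero spectral projection of $\abs{x}$ supported on a compact subinterval $[\delta,m-\delta]\subset(0,m)$, on which $\abs{x}$ is bounded above by $m-\delta<m$. I do not anticipate any genuine obstacle.
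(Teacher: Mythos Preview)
Your proof is correct and follows essentially the same spectral argument as the paper's. The only cosmetic difference is that the paper works with spectral projections $e^{\abs{x}}(0,\alpha]$ for $\alpha<m$ instead of your two-sided intervals $[\delta,m-\delta]$, but the contradiction derived is the same.
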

\begin{proof}
It follows from the last paragraph of the proof of Lemma \ref{lm:noncom3} that if $e^{\abs{x}}(0,\mu(\infty,x))=0$ then $\abs{x}\geq\mu(\infty,x)s(x)$.  

For the converse, assume that $\abs{x}\geq\mu(\infty,x)s(x)$. Since for  any $\alpha>0$, $e^{\abs{x}}(0,\alpha]\leq s(x)$ and $\abs{x}$ commutes with $e^{\abs{x}}(0,\alpha]$, we have that $\abs{x}e^{\abs{x}}(0,\alpha]\geq\mu(\infty,x)e^{\abs{x}}(0,\alpha]$. Suppose that there exists $0<\alpha<\mu(\infty,x)$ such that $e^{\abs{x}}(0,\alpha]\neq 0$. Then 
\begin{align*}
\abs{x}e^{\abs{x}}(0,\alpha]&=\int_{(0,\alpha]}\lambda de^{\abs{x}}(\lambda)\leq \alpha\int_{(0,\alpha]} de^{\abs{x}}(\lambda)=\alpha e^{\abs{x}}(0,\alpha]\\
&\leq \mu(\infty,x)e^{\abs{x}}(0,\alpha],
\end{align*}
 and $\alpha e^{\abs{x}}(0,\alpha]\neq\mu(\infty,x)e^{\abs{x}}(0,\alpha]$. Hence for all $\alpha<\mu(\infty,x)$, $e^{\abs{x}}(0,\alpha]=0$.
Therefore $e^{\abs{x}}(0,\mu(\infty,x))=\vee_{0<\alpha<\mu(\infty,x)}e^{\abs{x}}(0,\alpha]=0$.
\end{proof}

\begin{theorem}
\label{thm:noncom4}
Suppose that $\M$ is a non-atomic von Neumann algebra with a $\sigma$-finite trace $\tau$.  If $x$ is a $k$-extreme point of $B_{\nonsp}$ then $\mu(x)$ is a $k$-extreme point of $B_E$ and either
\par {\rm(i)} $\mu(\infty,x)=0$ or
\par {\rm(ii)} $n(x)\mathcal{M}n(x^*)=0$ and $\abs{x}\geq\mu(\infty,x)s(x)$.
\end{theorem}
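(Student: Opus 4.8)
The conditions \textup{(i)} and \textup{(ii)} cost nothing more: \lemref{lm:noncom2} already yields $\mu(\infty,x)=0$ or $n(x)\M n(x^{*})=0$, and \lemref{lm:noncom3} yields $\abs{x}\ge\mu(\infty,x)s(x)$, so we are in case \textup{(ii)} when $\mu(\infty,x)>0$ and in case \textup{(i)} otherwise. The real content is that $\mu(x)$ is $k$-extreme in $B_{E}$, and the plan is to transport this to the commutative model and argue by contraposition.

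First, by \lemref{lm:noncom3} we may apply Corollary~\ref{cor:isom}: it provides $q\in\mathcal P(\M)$ with $\tau(q)=\tauone$, a non-atomic commutative subalgebra $\mathcal N\subset q\M q$, and a unital $*$-isomorphism $V\colon S([0,\tauone),m)\to S(\mathcal N,\tau)$ with $V\mu(x)=\abs{x}q$ and $\mu(Vf)=\mu(f)$ for all $f$. Since $\mu$ is preserved, $\norme{f}=\norme{\mu(Vf)}=\|Vf\|_{\nonsp}$, so $V$ is a linear isometry of $E$ onto a closed subspace of $\nonsp$ which sends $\mu(x)$ to $\abs{x}q$, a sphere element because $\mu(\abs{x}q)=\mu(x)$. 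Fix the polar decomposition $x=u\abs{x}$ and set $b:=\abs{x}(\one-q)\ge 0$. Inspecting the three cases of Corollary~\ref{cor:isom} together with \lemref{lm:cond}, one sees that $b$ is block-orthogonal to $q\M q$ and that $b=0$ unless $q=e^{\abs{x}}(\mu(\infty,x),\infty)\ne s(x)$, in which case $b=\mu(\infty,x)r$ with $r=s(x)-q=e^{\abs{x}}\{\mu(\infty,x)\}$; in every case $\mu(b)\le\mu(\infty,x)$ pointwise.

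Now suppose $\mu(x)$ were not $k$-extreme in $B_{E}$. By \propref{lm:2} there are linearly independent $u_{1},\dots,u_{k}\in E$ with $\mu(x)+u_{i}\in B_{E}$ and $\mu(x)-\sum_{i=1}^{k}u_{i}\in B_{E}$. I would arrange, drawing on the explicit perturbations constructed in the proof of \thmref{thm:orbitmain} (see \propref{prop:orbit1} and \lemref{lm:orbit2}, \lemref{lm:orbit2v2}), that each $u_{i}$ is a bounded function of bounded support; since $\mu(x)>\mu(\infty,x)$ everywhere whenever $b\ne 0$, this forces $\mu(\mu(x)+u_{i})\ge\mu(\infty,x)$ and $\mu(\mu(x)-\sum u_{i})\ge\mu(\infty,x)$ pointwise. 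Put $w_{i}:=u\,V(u_{i})\in\nonsp$. As $Vu_{i}=q(Vu_{i})q$ and $b$ is block-orthogonal to $q\M q$, the operator $\abs{x}+Vu_{i}=V(\mu(x)+u_{i})\oplus b$ is block diagonal; $u$ is isometric on $s(x)=u^{*}u$, which dominates its left support, so
\[
\mu(x+w_{i})=\mu\bigl(V(\mu(x)+u_{i})\oplus b\bigr)=\mu\bigl(\mu(\mu(x)+u_{i})\oplus\mu(b)\bigr)=\mu(\mu(x)+u_{i}),
\]
the last step a distribution-function computation using $\mu(\mu(x)+u_{i})\ge\mu(\infty,x)\ge\mu(b)$. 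Hence $x+w_{i}\in B_{\nonsp}$, and the identical argument with $\mu(x)-\sum u_{i}$ gives $x-\sum_{i=1}^{k}w_{i}\in B_{\nonsp}$. Finally $w_{1},\dots,w_{k}$ are linearly independent: $V$ is injective, so $Vu_{1},\dots,Vu_{k}$ are linearly independent with left and right supports under $q\le s(x)$, on which left multiplication by $u$ is injective. By \propref{lm:2}, $x$ is not $k$-extreme in $B_{\nonsp}$, a contradiction; therefore $\mu(x)$ is $k$-extreme in $B_{E}$.

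The main obstacle is the residual operator $b=\abs{x}(\one-q)$, which is nonzero exactly when $\mu(\infty,x)>0$, $\tauone=\infty$ and $\tau(e^{\abs{x}}(\mu(\infty,x),\infty))=\infty$. Re-attaching $b$ to a perturbation of $\abs{x}q$ can strictly increase the norm unless that perturbation stays $\ge\mu(\infty,x)$ near infinity, which is precisely why the perturbations $u_{i}$ must be taken bounded and of bounded support. So the delicate point — everything else being bookkeeping with the block-diagonal structure furnished by Corollary~\ref{cor:isom} and the polar decomposition — is to verify that non-$k$-extremality of $\mu(x)$ always admits such a well-behaved witness, for which one must return to the constructions of Section~2.
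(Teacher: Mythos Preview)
Your contrapositive strategy is natural, but the gap you yourself flag at the end is real and is not closed by ``returning to the constructions of Section~2''. The difficulty arises precisely in Case~(3) of Corollary~\ref{cor:isom}, where $q=e^{\abs{x}}(\mu(\infty,x),\infty)$ and $b=\mu(\infty,x)\,e^{\abs{x}}\{\mu(\infty,x)\}\ne 0$: for your block-diagonal computation $\mu\bigl(V(\mu(x)+u_i)\oplus b\bigr)=\mu(\mu(x)+u_i)$ to go through you need $\mu(\mu(x)+u_i)\ge\mu(\infty,x)$ pointwise, which you secure by insisting the $u_i$ be bounded of bounded support. But Theorem~\ref{thm:orbitmain} and its lemmas only manufacture such perturbations under the \emph{specific} hypothesis that $\mu(f)\prec g$ with $\mu(f)\ne\mu(g)$ for some $g\in S_E$; they say nothing about an arbitrary failure of $k$-extremality. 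If $\mu(x)=\tfrac1{k+1}\sum f_i$ with $f_i\in S_E$ linearly independent, there is no a~priori reason the $f_i-\mu(x)$ can be traded for compactly supported witnesses while keeping all the ball constraints, and naive truncation can raise norms in a general symmetric space $E$.

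The paper sidesteps this entirely by arguing forward. Starting from an arbitrary decomposition $\mu(x)=\tfrac1{k+1}\sum f_i$ in $S_E$, it applies $V$ to obtain $\abs{x}q=\tfrac1{k+1}\sum V(f_i)$ and, in Case~(3), \emph{extends} each $V(f_i)$ to $x_i=V(f_i)+\alpha_i\,e^{\abs{x}}\{\mu(\infty,x)\}$ with $0\le\alpha_i\le\mu(\infty,f_i)$ chosen so that $\tfrac1{k+1}\sum\alpha_i=\mu(\infty,x)$. By Corollary~\ref{cor:1} this does not change singular values, so $x_i\in B_{\nonsp}$ and $\abs{x}=\tfrac1{k+1}\sum x_i$ exactly; $k$-extremality of $x$ then forces linear dependence of the $ux_i$, and the disjointness of $V(f_i)$ from $e^{\abs{x}}\{\mu(\infty,x)\}$ transmits this to the $f_i$. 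In other words, rather than taming the witnesses on the $E$ side, one absorbs the residual $b$ on the operator side via Corollary~\ref{cor:1}.

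There is also a smaller slip: your assertion that $V(u_i)$ has supports under $q\le s(x)$ is false in Case~(1), where $q=\one$ but $s(x)$ may be proper (here $\mathcal N\subset\M$, not $\mathcal N\subset s(x)\M s(x)$), so left multiplication by the partial isometry $u$ can annihilate parts of $V(u_i)$ and destroy linear independence of the $w_i$. The paper handles this case with a genuine isometry $w$ satisfying $x=w\abs{x}$, available because $\tau(s(x))<\infty$ forces $n(x)\sim n(x^*)$.
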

\begin{proof}
Suppose that $x$ is a $k$-extreme point of the unit ball in $E(\mathcal{M},\tau)$. By Lemma \ref{lm:noncom2} and \ref{lm:noncom3} conditions (i) or (ii) are satisfied. 

Let 
\begin{equation}
\label{eq:noncom4}
\mu(x)=\frac{1}{k+1}\sum_{i=1}^{k+1} f_i, \text{ where }f_i\in S_E, \,i=1,2,\dots, k+1.
\end{equation}
To prove that $\mu(x)$ is $k$-extreme we need to show that $f_1,f_2,\dots,f_{k+1}$ are linearly dependent.
Let 
\[
p=s(\abs{x}-\mu(\infty,x)s(x))=e^{\abs{x}}(\mu(\infty,x),\infty).
\]
By Corollary \ref{cor:isom}, there exist a projection $q\in \mathcal{P}(\M)$, a non-atomic commutative von Neumann subalgebra $\mathcal{N}\subset q\M q$ and a $*$-isomorphism $V$ acting from the $*$-algebra $S\left(\left[0,\tauone\right),m\right)$ into the $*$-algebra $S(\mathcal{N},\tau)$, such that 
\[ V\mu(x)=\abs{x}q\ \ \ \text{ and }\ \ \ \mu(V(f))=\mu(f)\ \ \text{ for all } f\in S\left([0,\tauone),m\right).
\]
Moreover, there are three choices of $q$: (1) $q=\one$ whenever $\tau(s(x))<\infty$, (2) $q=s(x)$ if $\tau(s(x))=\infty$ and $\tau(p)<\infty$, or (3) $q=p$ if $\tau(p)=\infty$.

Applying now isomorphism $V$ to the equation (\ref{eq:noncom4}),
\begin{equation}
\label{eq:noncom4.1}
\abs{x}q=V\mu(x)=\frac1{k+1}\sum_{i=1}^{k+1}V(f_i).
\end{equation}

Case (1). Let $\tau(s(x))<\infty$ and $q=\one$. Since $s(x)\sim s(x^*)$ and $\tau(s(x))<\infty$,  by \cite[Chapter 5, Proposition 1.38]{Takesaki} $n(x)\sim n(x^*)$. Then by \cite[Lemma 2.6]{moj} there exists an isometry $w$ such that $x=w\abs{x}$. 
Therefore by (\ref{eq:noncom4.1}) we have 
\[
x=\frac1{k+1}\sum_{i=1}^{k+1}wV(f_i),
\]
and $wV(f_1),wV(f_2),\dots, wV(f_{k+1})$ are linearly dependent. Since $w$ and $V$ are isometries $f_1, f_2, \dots, f_{k+1}$ are linearly dependent.

Case (2). Suppose that $\tau(s(x))=\infty$, $\tau(p)<\infty$, and $q=s(x)$.
Let $x=u\abs{x}$ be the polar decomposition of $x$.  By (\ref{eq:noncom4.1})
\[
 x=\frac1{k+1}\sum_{i=1}^{k+1}uV(f_i),
\]
where $uV(f_i)\in B_{\nonsp}$, $i=1,2,\dots, k+1$.
Since $x$ is $k$-extreme there exist constants $C_1,C_2,\dots, C_{k+1}$, such that $\sum_{i=1}^{k+1}C_i\neq 0$ and $\sum_{i=1}^{k+1}C_i uV(f_i)=0$. However $q=s(x)$ is an identity in the von Neumann algebra $\mathcal{N}\subset s(x)\M s(x)$ and so $u^*uV(f_i)=s(x)V(f_i)=V(f_i)$. Consequently, 
\[
\sum_{i=1}^{k+1}C_i V(f_i)=0
\]
and since $V$ is injective $f_1,f_2,\dots, f_{k+1}$ are linearly dependent.

Case (3). Consider now the case when $q=p=e^{\abs{x}}(\mu(\infty,x),\infty)$ and $\tau(p)=\infty$. Thus in view of Lemma \ref{lm:cond}, $q^{\perp}=e^{\abs{x}}\{0\}+e^{\abs{x}}\{\mu(\infty,x)\}\geq e^{\abs{x}}\{\mu(\infty,x)\}$.

For each $i=1,2,\dots, k+1$, choose $0\leq \alpha_i\leq \mu(\infty,f_i)$ such that $\frac1{k+1}\sum_{i=1}^{k+1}\alpha_i=\mu(\infty,x)$. Such constants exist, since by \cite[Lemma 2.5]{FK} for all $t>0$
\[
\mu(t, x)=\mu\left(t,\frac1{k+1}\sum_{i=1}^{k+1}fi\right)\leq \frac1{k+1}\sum_{i=1}^{k+1}\mu\left(\frac t{k+1}, f_i\right), 
\]
and so $\mu(\infty,x)\leq\frac1{k+1}\sum_{i=1}^{k+1}\mu(\infty, f_i)$.

Define operators $x_i=V(f_i)+\alpha_i e^{\abs{x}}\{\mu(\infty,x)\}$. Observe that since $q$ is an identity in $\mathcal{N}$, $q^{\perp}V(f_i)=V(f_i)q^{\perp}=0$, and so $e^{\abs{x}}\{\mu(\infty,x)\}V(f_i)=
V(f_i)e^{\abs{x}}\{\mu(\infty,x)\}=0$. Furthermore $\alpha_i\leq \mu(\infty,f_i)=\mu(\infty,V(f_i))$, and hence by Corollary \ref{cor:1}, $\mu(x_i)=\mu(V(f_i))=\mu(f_i)$. Hence $x_i\in B_{\nonsp}$ for all $i=1,2,\dots, k+1$. We have now by (\ref{eq:noncom4.1}) that 
\begin{align*}
\abs{x}&=\abs{x}q+\abs{x}e^{\abs{x}}\{\mu(\infty,x)\}
=\abs{x}q+\mu(\infty,x)e^{\abs{x}}\{\mu(\infty,x)\}\\
&=\frac1{k+1}\sum_{i=1}^{k+1}V(f_i)
+\frac1{k+1}\sum_{i=1}^{k+1}\alpha_ie^{\abs{x}}\{\mu(\infty,x)\}=\frac1{k+1}\sum_{i=1}^{k+1}x_i.
\end{align*}
Using the polar decomposition $x=u\abs{x}$, 
\[
x=\frac1{k+1}\sum_{i=1}^{k+1}ux_i=\frac1{k+1}\sum_{i=1}^{k+1}(uV(f_i)+\alpha_iue^{\abs{x}}\{\mu(\infty,x)\}),
\]
and $ux_1,ux_2,\dots, ux_{k+1}$ are linearly dependent. Since two components of $x_i$, $uV(f_i)$ and $\alpha_iue^{\abs{x}}\{\mu(\infty,x)\}$ have disjoint supports, $uV(f_1), \dots, uV(f_{k+1})$ are linearly dependent. Moreover $q\leq s(x)$, and so $u^*uV(f_i)=s(x)V(f_i)=s(x)qV(f_i)=qV(f_i)=V(f_i)$.  Since  $V$ is an isometry, $f_1,f_2,\dots, f_{k+1}$ are linearly dependent. 
\end{proof}

In order to show the converse statement of Theorem \ref{thm:noncom4} we need several preliminary results.
\begin{lemma}
\label{lm:mueq}
Let $x\in S_{\nonsp}$ and $\mu(x)$ be a $k$-extreme point of $B_E$. If $x=\frac{1}{k+1}\sum_{i=1}^{k+1}x_i$, where $x_i\in B_{\nonsp}$, then  $\mu(x)=\frac{1}{k+1}\sum_{i=1}^{k+1}\mu(x_i)$.
\end{lemma}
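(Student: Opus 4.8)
The plan is to establish the two inequalities $\mu(x) \prec \frac{1}{k+1}\sum \mu(x_i)$ and $\frac{1}{k+1}\sum \mu(x_i) \prec \mu(x)$, and then invoke the $k$-extremality of $\mu(x)$ in $B_E$ together with Corollary~\ref{cor:orbitmain}. First, I would observe that $\mu(x)=\mu\bigl(\frac{1}{k+1}\sum x_i\bigr) \prec \frac{1}{k+1}\sum \mu(x_i)$: this is the standard submajorization property of the singular value function under sums (used already in the proof of Theorem~\ref{thm:noncom4} via \cite[Lemma~2.5]{FK}), combined with convexity of the partial-integral functional $t\mapsto\int_0^t$. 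Setting $g=\frac{1}{k+1}\sum_{i=1}^{k+1}\mu(x_i)$, we have $\mu(x)\prec g$. Since each $\mu(x_i)\in B_E$ with $\norme{\mu(x_i)}\le 1$, convexity of the norm gives $\norme{g}\le 1$, so $g\in B_E$. If $g\notin S_E$, i.e. $\norme{g}<1$, then because $\norme{\mu(x)}=1$ and $\mu(x)\prec g$ with $E$ strongly symmetric, we would need $\norme{\mu(x)}\le\norme{g}<1$, a contradiction; hence $g\in S_E$.

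\textbf{Next}, apply Corollary~\ref{cor:orbitmain} to the $k$-extreme point $\mu(x)\in S_E$ and the function $g\in S_E$ with $\mu(x)\prec g$: it follows that $\mu(g)=\mu(x)$. But $g$ is itself a decreasing, nonnegative function, so $\mu(g)=g$, and therefore $g=\mu(x)$, i.e.
\[
\mu(x)=\frac{1}{k+1}\sum_{i=1}^{k+1}\mu(x_i).
\]
This is exactly the assertion of the lemma, so in fact the $k$-extremality of $\mu(x)$ is used only through Corollary~\ref{cor:orbitmain}, not directly through the definition of $k$-extreme point.

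\textbf{The main obstacle} I anticipate is making the submajorization step $\mu\bigl(\frac{1}{k+1}\sum x_i\bigr)\prec\frac{1}{k+1}\sum\mu(x_i)$ fully rigorous in the noncommutative $\tau$-measurable setting, in particular justifying that for each $t$, $\int_0^t\mu\bigl(\sum x_i\bigr)\le\sum\int_0^t\mu(x_i)$. This is the well-known inequality $\int_0^t\mu(a+b)\le\int_0^t\mu(a)+\int_0^t\mu(b)$ for $a,b\in S\Mtau$ (a consequence of the fact that $\int_0^t\mu(\cdot)$ defines a norm on $L_1+L_\infty$, cf. \cite{FK,DDPnoncomm}), iterated $k$ times; I would cite it rather than reprove it. A secondary point to check is that $g\in E$ to begin with, which is immediate since $E$ is a linear space containing each $\mu(x_i)$. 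Once these are in place, the argument is short: obtain $\mu(x)\prec g\in S_E$, apply Corollary~\ref{cor:orbitmain}, and conclude $g=\mu(g)=\mu(x)$.
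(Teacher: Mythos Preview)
Your proposal is correct and follows essentially the same route as the paper: establish the submajorization $\mu(x)\prec g:=\frac{1}{k+1}\sum_{i=1}^{k+1}\mu(x_i)$, note that $g\in B_E$ (indeed $g\in S_E$ by strong symmetry), and then invoke Corollary~\ref{cor:orbitmain} to conclude $\mu(x)=\mu(g)=g$. The paper's proof is just a two-line version of exactly this argument; your treatment is slightly more explicit in verifying $g\in S_E$, but the idea is identical. (Your opening sentence about establishing ``two inequalities'' is a bit misleading, since you never prove $g\prec\mu(x)$ directly---it falls out of the corollary---but the body of the argument is fine.)
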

\begin{proof}
Let $x\in S_{\nonsp}$ and $x=\frac{1}{k+1}\sum_{i=1}^{k+1}x_i$, where $x_i\in B_{\nonsp}$. Since $\mu(x)\prec \frac{1}{k+1}\sum_{i=1}^{k+1}\mu(x_i)$, where $\frac{1}{k+1}\sum_{i=1}^{k+1}\mu(x_i)\in B_E$, the claim follows by Corollary \ref{cor:orbitmain}.
\end{proof}
\begin{proposition}
\label{lm:3}
Let  $x\in S_{E\Mtau}$ be such that  $\mu(x)$ is a $k$-extreme point of $B_{E}$, $\mu(\infty,x)>0$ and $x\geq \mu(\infty,x)s(x)$. Let $x=\frac1{k+1}\sum_{i=1}^{k+1} x_i$, where $x_i\in S_{E\Mtau}$, $x_i\geq 0$,  $i=1,2,\dots,k+1$. Then for every $i=1,2,\dots,k+1$, $x_i\geq \mu(\infty,x_i)s(x_i)$. Moreover, if for some $i$, $\mu(\infty,x_i)>0$ then $s(x_i)=s(x)$.
\end{proposition}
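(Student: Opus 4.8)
\emph{Proof plan.}
I would first reduce to the case that $\M$ is non-atomic: if it is not, pass to the non-atomic ampliation $(\mathcal A,\kappa)$ of Remark~\ref{rm2} via the isometric $*$-isomorphism $\tilde\pi$, which preserves positivity and singular value functions and satisfies $\tilde\pi(s(y))=s(\tilde\pi(y))$, so both the hypotheses and the conclusion carry over. Next, since $\mu(x)$ is $k$-extreme in $B_E$ and $x=\frac1{k+1}\sum_{i=1}^{k+1}x_i$ with $x_i\in B_{\nonsp}$, Lemma~\ref{lm:mueq} gives $\mu(x)=\frac1{k+1}\sum_{i=1}^{k+1}\mu(x_i)$; letting $t\to\infty$ and writing $c:=\mu(\infty,x)$, $c_i:=\mu(\infty,x_i)$, one obtains $c=\frac1{k+1}\sum_{i=1}^{k+1}c_i$. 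Since each $x_i\ge0$ we have $0\le x_j\le(k+1)x$, hence $s(x_j)\le s(x)$ for all $j$; and since $c>0$, Lemma~\ref{lm:cond} gives $e^x(0,c)=0$, so $e^x(s,\infty)=s(x)$ for $0<s<c$ and in particular $\tauone\ge\tau(s(x))=\infty$.

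The core of the argument is the inequality $x_j\ge c_js(x_j)$ (trivial when $c_j=0$). For each $t>0$ choose, by Lemma~\ref{lm:aux1} (this is where non-atomicity is used), a projection $e_t$ with $\tau(e_t)=t$ and $e^x(\mu(t,x),\infty)\le e_t\le e^x[\mu(t,x),\infty)$; by Lemma~\ref{lm:aux2}, $\tau(xe_t)=\int_0^t\mu(x)$. Combining $\tau(xe_t)=\frac1{k+1}\sum_l\tau(x_le_t)$, the standard bound $\tau(x_le_t)\le\int_0^t\mu(x_l)$, and $\int_0^t\mu(x)=\frac1{k+1}\sum_l\int_0^t\mu(x_l)$, equality must hold throughout: $\tau(x_le_t)=\int_0^t\mu(x_l)$ for every $l$, so by Lemma~\ref{lm:aux3} also $e^{x_l}(\mu(t,x_l),\infty)\le e_t\le e^{x_l}[\mu(t,x_l),\infty)$ for every $l$; that is, one projection $e_t$ is a ``top slice'' of $x$ and of every $x_l$ simultaneously. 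Now suppose, for contradiction, that $c_j>0$ but $x_j\not\ge c_js(x_j)$; by Lemma~\ref{lm:cond} there is $0<\beta<c_j$ with $r:=e^{x_j}(0,\beta]\ne0$. Because $\mu(t,x_j)\ge c_j>\beta$ for all $t$, we have $e_t\le e^{x_j}[\mu(t,x_j),\infty)\le e^{x_j}(\beta,\infty)\le\one-r$, so $e_tr=0$ for every $t$; letting $t\to\infty$ in the sandwich relations gives $e^x(c,\infty)\perp r$ and $e^{x_l}(c_l,\infty)\perp r$ for $l\ne j$. From the former, together with $e^x(0,c)=0$ and $r\le s(x)$, we get $r\le e^x\{c\}$, whence $xr=cr$; from the latter, $r\le e^{x_l}[0,c_l]$, so $\langle x_l\eta,\eta\rangle\le c_l\|\eta\|^2$ for all $\eta\in rH$ and all $l\ne j$. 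Taking a unit vector $\eta\in rH$ and using $x_jr\le\beta r$, the identity $(k+1)\langle x\eta,\eta\rangle=\langle x_j\eta,\eta\rangle+\sum_{l\ne j}\langle x_l\eta,\eta\rangle$ yields $(k+1)c\le\beta+\sum_{l\ne j}c_l<\sum_l c_l=(k+1)c$, a contradiction. Hence $e^{x_j}(0,c_j)=0$, i.e. $x_j\ge c_js(x_j)$ for every $j$.

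For the ``moreover'', apply Proposition~\ref{prop:singfun}(2) to each $x_l$ and to $x$: $\mu(x_l-c_ls(x_l))=\mu(x_l)-c_l$ and $\mu(x-cs(x))=\mu(x)-c$. Set $a:=(k+1)(x-cs(x))\ge0$ and $Y:=\sum_l c_l(s(x)-s(x_l))\ge0$; then $\sum_l(x_l-c_ls(x_l))=a+Y$. On one hand $\mu(a+Y)\ge\mu(a)$ pointwise (as $0\le a\le a+Y$); on the other, by subadditivity of $\int_0^t\mu(\cdot)$ on positive operators together with the identities above, $\mu(a+Y)\prec\sum_l\mu(x_l-c_ls(x_l))=\sum_l\mu(x_l)-(k+1)c=(k+1)(\mu(x)-c)=\mu(a)$. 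Therefore $\mu(a+Y)=\mu(a)$, and since $\mu(\infty,a)=(k+1)(\mu(\infty,x)-c)=0$, Proposition~\ref{prop:singfun}(5) forces $Y=0$; as $c_l\ge0$ and $s(x)-s(x_l)\ge0$, each $c_l(s(x)-s(x_l))=0$, so $s(x_l)=s(x)$ whenever $c_l>0$.

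I expect the delicate point to be the middle paragraph: extracting, from the mere \emph{equality} $\mu(x)=\frac1{k+1}\sum\mu(x_i)$, a single projection $e_t$ that slices all of $x,x_1,\dots,x_{k+1}$ at once (so that Lemma~\ref{lm:aux3} applies uniformly), and then converting that spectral rigidity, via $e_tr=0$, into the pointwise form-inequalities on $rH$ that collapse to a numerical contradiction. The reduction to the non-atomic case and the final application of Proposition~\ref{prop:singfun}(5) are, by comparison, routine.
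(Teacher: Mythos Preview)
Your proposal is correct. The skeleton---reduce to non-atomic $\M$ via Remark~\ref{rm2}, then use Lemmas~\ref{lm:aux1}--\ref{lm:aux3} to produce a single projection $e_t$ that simultaneously slices $x$ and every $x_l$, and derive a spectral contradiction---is the same as the paper's. The differences are in the order and in two implementation choices.

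First, the paper proves the support equality \emph{before} the inequality: it forms $x+\mu(\infty,x)n(x)$ and $x_i+\mu(\infty,x_i)n(x_i)$, uses Proposition~\ref{prop:singfun}(1) and~(5) to force equality in (\ref{eq:lm3.6}), and reads off $n(x_i)=n(x)$ when $\mu(\infty,x_i)>0$; it then reduces to $s(x)=\one$ and runs the slice/contradiction argument there (with a finite-trace subprojection $q$ and trace computations), and finally lifts back by adding $\mu(\infty,x)n(x)$. You invert the order: you prove $x_j\ge c_j s(x_j)$ directly for general $s(x)$, using $r\le s(x_j)\le s(x)$ to pin $r\le e^x\{c\}$ without any reduction, and you replace the trace computation by a quadratic-form inequality on a single unit vector in $rH$; only then do you deduce the support equality via the identity $\sum_l(x_l-c_ls(x_l))=(k+1)(x-cs(x))+\sum_l c_l(s(x)-s(x_l))$, Proposition~\ref{prop:singfun}(2), and the majorization squeeze $\mu(a)\le\mu(a+Y)\prec\mu(a)$ together with Proposition~\ref{prop:singfun}(5). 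Your route avoids the intermediate reduction to $s(x)=\one$ and the re-lifting step, at the cost of needing the inequality first so that Proposition~\ref{prop:singfun}(2) applies to each $x_l$. Both arguments rely on the same two ingredients---the simultaneous slice from Lemma~\ref{lm:aux3} and Proposition~\ref{prop:singfun}(5)---so the difference is organizational rather than conceptual.
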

\begin{proof}
Let $\mu(x)$ be $k$-extreme in $E$, $\mu(\infty,x)>0$, $x\geq\mu(\infty,x)s(x)$, and $x=\frac1{k+1}\sum_{i=1}^{k+1} x_i$, where $x_i\in S_{E\Mtau}$, $x_i\geq 0$, for $i=1,2,\dots,k+1$.
By the assumption $\mu(\infty,x)>0$, $\tauone=\infty$.

Observe first that we can assume without loss of generality that $\M$ is non-atomic. If not, we can consider new elements $\mathds{1}\overline\otimes x,\mathds{1}\overline\otimes x_i\in S_{E(\mathcal{A},\kappa)}$, $i=1,2,\dots,k+1$, where $\mathcal{A}=\mathcal{N}\otimes \M$ is a non-atomic von Neumann algebra (see Remark \ref{rm2}) and $\mathds{1}\overline\otimes x=\frac1{k+1}\sum_{i=1}^{k+1}\mathds{1}\overline\otimes x_i$. 
 Observe that  $\tilde{\mu}(\infty,\mathds{1}\overline\otimes x)=\mu(\infty,x)$ and $\tilde{\mu}(\infty,\mathds{1}\overline\otimes x_i)=\mu(\infty,x_i)$. Moreover, $s(x)=s(x_i)$ if and only if  $s(\mathds{1}\overline\otimes x)=\mathds{1}\otimes s(x)=\mathds{1}\otimes s(x_i)=s(\mathds{1}\overline\otimes x_i)$.
 Finally, $\mathds{1}\overline\otimes x-\tilde{\mu}(\infty,\mathds{1}\overline\otimes x)s(\mathds{1}\overline\otimes x)=\mathds{1}\overline\otimes x-\mu(\infty,x)\mathds{1}\otimes s(x)=\mathds{1}\overline\otimes(x-\mu(\infty,x)
  s(x))$, and so  $x\geq \mu(\infty,x)s(x)$ if and only if $\mathds{1}\overline\otimes x\geq \mu(\infty,\mathds{1}\overline\otimes x)s(\mathds{1}\overline\otimes x)$. The same is true for $x_i$.
Therefore all the conditions in the proposition for $x$ and $x_i$ are equivalent to the corresponding conditions for $\mathds{1}\overline\otimes x$ and $\mathds{1}\overline\otimes x_i$.

We will show first that if $\mu(\infty, x_i)>0$ then $s(x_i)=s(x)$, $i=1,2,\dots, k+1$.
Observe that $0=n(x)xn(x)=\frac1{k+1}\sum_{i=1}^{k+1}n(x)x_in(x)$, where $n(x)x_in(x)\geq 0$. Hence $n(x)x_in(x)=0$, and so $x_in(x)=0$. Consequently  $n(x_i)\geq n(x)$, $i=1,2,\dots, k+1$.  Thus by Lemma \ref{lm:mueq},
\begin{equation}
\label{eq:lm3.6}
\begin{split}
x+\mu(\infty,x)n(x)&=\frac{1}{k+1}\sum_{i=1}^{k+1}x_i+\mu(\infty,x)n(x)=\frac{1}{k+1}\sum_{i=1}^{k+1}x_i
\\&+\frac{1}{k+1}\sum_{i=1}^{k+1}\mu(\infty,x_i)n(x)\leq \frac{1}{k+1}\sum_{i=1}^{k+1} (x_i+\mu(\infty,x_i)n(x_i)).
\end{split}
\end{equation}
Note that by Proposition \ref{prop:singfun} (1), $\mu(x+\mu(\infty,x)n(x))=\mu(x)$ and $\mu(x_i+\mu(\infty,x_i)n(x_i))=\mu(x_i)$, $i=1,2,\dots,k+1$. Furthermore, since $\mu(x)$ is $k$-extreme, Lemma \ref{lm:mueq} implies that $\mu(x)=\frac{1}{k+1}\sum_{i=1}^{k+1}\mu(x_i)$. Therefore in view of (\ref{eq:lm3.6}),
\begin{align*}
\mu(x)&=\mu(x+\mu(\infty,x)n(x))\leq \mu\left(\frac1{k+1}\sum_{i=1}^{k+1}
\left(x_i+\mu(\infty,x_i)n(x_i)\right)\right)\\
&\prec \frac1{k+1}\sum_{i=1}^{k+1}\mu(x_i+\mu(\infty,x_i)n(x_i))=\frac1{k+1}\sum_{i=1}^{k+1}\mu(x_i)=\mu(x),
\end{align*}
and so
\[
\mu(x+\mu(\infty,x)n(x))=\mu\left(\frac{1}{k+1}\sum_{i=1}^{k+1} (x_i+\mu(\infty,x_i)n(x_i))\right).
\]

 However, $x+\mu(\infty,x)n(x)\leq \frac{1}{k+1}\sum_{i=1}^{k+1} (x_i+\mu(\infty,x_i)n(x_i))$ and $x+\mu(\infty,x)n(x)\geq \mu(\infty,x)\one$. By Proposition \ref{prop:singfun} (5) and in view of (\ref{eq:lm3.6}),
\begin{align}
\label{eq:lm3.7}
x+\mu(\infty,x)n(x)&=\frac{1}{k+1}\sum_{i=1}^{k+1}\left(x_i+\mu(\infty,x_i)n(x_i)\right)
\\ \notag
&=\frac{1}{k+1}\sum_{i=1}^{k+1}\left(x_i+\mu(\infty,x_i)n(x)\right).
\end{align} Hence if $\mu(\infty,x_i)>0$ then $n(x)=n(x_i)$ and $s(x)=s(x_i)$.

We will show next that $x_i\geq \mu(\infty,x_i)s(x_i)$, $i=1,2,\dots,k+1$.

Assume first that $s(x)=\one$. Hence $n(x)=e^x\{0\}=0$. Then  $x\geq \mu(\infty,x)\one$ and in view of Lemma \ref{lm:cond}, $n(x)=e^x[0,\mu(\infty,x))=0$. Moreover, as observed in Lemma \ref{lm:mueq}, $\mu(x)=\frac1{k+1}\sum_{i=1}^{k+1}\mu(x_i)$.

Let $0<\lambda<\tauone=\infty$. Choose by  Lemma \ref{lm:aux1} a projection $p_\lambda$ such that $\tau(p_{\lambda})=\lambda$ and
\begin{equation}
\label{eq:lm3.1}
e^x\left(\mu(\lambda,x),\infty)\right)\leq p_{\lambda}\leq e^x\left[\mu(\lambda,x),\infty)\right).
\end{equation}
By Lemma \ref{lm:aux2}, $\tau(xp_{\lambda})=\int_0^{\tau(p_{\lambda})}
\mu(x)$. Therefore
\begin{align*}
\frac{1}{k+1}\sum_{i=1}^{k+1}\tau(x_ip_{\lambda})&=\tau(xp_{\lambda})=\int_0^{\tau(p_{\lambda})}\mu(x)
=\frac{1}{k+1}\sum_{i=1}^{k+1}\int_0^{\tau(p_{\lambda})}\mu(x_i).
\end{align*}
Since $\mu(x_ip_{\lambda})\prec\mu(x_i)\chi_{[0,\tau(p_{\lambda}))}$, and so $\tau(x_ip_{\lambda})=\int_0^{\infty}
\mu(x_ip_{\lambda})\leq \int_0^{\tau(p_{\lambda})} \mu(x_i)$, it follows that
\[
\tau(x_ip_{\lambda})= \int_0^{\tau(p_{\lambda})} \mu(x_i),
\]
for $i=1,2,\dots,k+1$.
Consequently, Lemma \ref{lm:aux3} implies that 
\begin{equation}
\label{eq:lm3.2}
e^{x_i}\left(\mu(\lambda,x_i),\infty)\right)\leq p_{\lambda}\leq e^{x_i}\left[\mu(\lambda,x_i),\infty)\right),\quad i=1,2,\dots,k+1.
\end{equation}
Denote by $p=\vee_{\lambda>0} p_{\lambda}$. Since $\vee_{\lambda>0} e^x\left(\mu(\lambda,x),\infty\right)=e^x\left(\mu(\infty,x),\infty\right)$ and $\vee_{\lambda>0}  e^x\left[\mu(\lambda,x),\infty\right)\leq e^x\left[\mu(\infty,x),\infty\right)$, relation  (\ref{eq:lm3.1}) implies that
\[
e^x\left(\mu(\infty,x),\infty\right)\leq p\leq e^x\left[\mu(\infty,x),\infty\right).
\]
Similarly by (\ref{eq:lm3.2}),
\[
e^{x_i}\left(\mu(\infty,x_i),\infty\right)\leq p\leq e^{x_i}\left[\mu(\infty,x_i),\infty\right),\quad i=1,2,\dots,k+1.
\]
Let $j\in\{1,2,\dots,k+1\}$ be fixed.
Then
\begin{align}
\label{eq:lm3.3}
e^x\left(\mu(\infty,x),\infty\right)&\leq e^{x_j}\left[\mu(\infty,x_j),\infty\right),
\end{align}
and
\begin{align}
\label{eq:lm3.4}
e^{x_i}\left(\mu(\infty,x_i),\infty\right)&\leq e^{x_j}\left[\mu(\infty,x_j),\infty\right)\text{ for all }i=1,2,\dots,k+1.
\end{align}
 
Assume that $\mu(\infty, x_j)>0$. We will show next that  $e^{x_j}[0,\mu(\infty,x_j))=0$.  Suppose to the contrary that $e^{x_j}[0,\mu(\infty,x_j))\neq 0$. Choose $0<\lambda_0<\mu(\infty,x_j)$ such that $e^{x_j}[0,\lambda_0]\neq 0$. Since $\M$ is non-atomic and $\tau$ is semi-finite, there exists a non-zero projection $q\leq e^{x_j}[0,\lambda_0]$ and $q\neq e^{x_j}[0,\lambda_0]$, with $\tau(q)<\infty$.  Note first that by (\ref{eq:lm3.3}) and the fact that $e^{x}[0,\mu(\infty,x))=0$, $q\leq e^{x_j}[0,\mu(\infty,x_j))\leq e^x[0,\mu(\infty,x)]=e^x\{\mu(\infty,x)\}$.   Hence by spectral representation of $x$
\begin{align*}
xq&=xe^x\{\mu(\infty,x)\}q=\mu(\infty,x)e^x\{\mu(\infty,x)\}q=\mu(\infty,x)q.
\end{align*}
Consequently, 
\begin{align}
\label{eq:lm3.5}
\sum_{i=1}^{k+1}\tau(x_iq)&=\tau\left(\sum_{i=1}^{k+1}x_iq
\right)=\tau((k+1)xq)=(k+1)\tau(\mu(\infty,x)q)\\\notag
&=\sum_{i=1}^{k+1}\mu(\infty,x_i)\tau(q).
\end{align}
By (\ref{eq:lm3.4}), $q\leq e^{x_j}[0,\mu(\infty,x_j))\leq e^{x_i}[0,\mu(\infty,x_i)]$ for all $i=1,2,\dots,k+1$. Since $0\leq x_ie^{x_i}[0,\mu(\infty,x_i)]\leq\mu(\infty,x_i)e^{x_i}[0,\mu(\infty,x_i)]$ we have
\begin{align*}
\tau(x_iq)&=\tau(x_ie^{x_i}[0,\mu(\infty,x_i)]q)=\tau(qx_ie^{x_i}[0,\mu(\infty,x_i)]q) \\
&\leq \tau(q\mu(\infty,x_i)e^{x_i}[0,\mu(\infty,x_i)]q)=\tau(\mu(\infty,x_i)e^{x_i}[0,\mu(\infty,x_i)]q)\\
&=\tau(\mu(\infty,x_i)q)=\mu(\infty,x_i)\tau(q),
\end{align*}
for all $i=1,2,\dots,k+1$. Therefore (\ref{eq:lm3.5}) implies that 
\[
\tau(x_iq)=\mu(\infty,x_i)\tau(q),\,\text{ for all }i=1,2,\dots,k+1.
\]
 In particular we get that $\tau(x_jq)=\mu(\infty,x_j)\tau(q)$. In view of $0<\tau(q)<\infty$ we must have
\begin{align*}
\tau(x_jq)&=\tau(qx_je^{x_j}[0,\lambda_0]q)\leq \tau(q\lambda_0e^{x_j}[0,\lambda_0]q)= \tau(\lambda_0q)=\lambda_0\tau(q)\\
&<\mu(\infty,x_j)\tau(q),
\end{align*}
 which is impossible.
Consequently $e^{x_j}[0,\mu(\infty,x_j))=0$. By the first part of the proof, $n(x)=n(x_j)=e^{x_j}\{0\}=0$, and by Lemma \ref{lm:cond}, $x_j\geq\mu(\infty,x_j)\one$.  Since the same follows instantly for those $x_i$'s for which $\mu(\infty,x_i)=0$,  we have that $x_i\geq \mu(\infty,x_i)\one$ for all $i=1,2,\dots, k+1$.

Consider now the general case, when $s(x)$ is not necessarily an identity. Recall that by Proposition \ref{prop:singfun} (1), $\mu(x+\mu(\infty,x)n(x))=\mu(x)$ and $\mu(x_i+\mu(\infty,x_i)n(x_i)=\mu(x_i)$, $i=1,2,\dots,k+1$. We also have by (\ref{eq:lm3.7}) that $ x+\mu(\infty,x)n(x)=\frac{1}{k+1}\sum_{i=1}^{k+1}\left(x_i+\mu(\infty,x_i)
n(x_i)\right)$. Clearly $s(x+\mu(\infty,x)n(x))=\one$ and $x+\mu(\infty,x)n(x)\geq \mu(\infty,x)\one$.

We will repeat the above argument to the operators $x+\mu(\infty,x)n(x)$, $x_i+\mu(\infty,x_i)n(x_i)$ instead of $x$, $x_i$, respectively. Consequently, it follows that $x_i+\mu(\infty,x_i)n(x_i)\geq \mu(\infty,x_i)\one$ and  $x_i\geq \mu(\infty,x_i)s(x_i)$  for all  $i=1,2,\dots,k+1$.
\end{proof}

\begin{lemma}
\label{lm:4}
Suppose $\mu(x)$ is $k$-extreme, $x=\frac1{k+1}\sum_{i=1}^{k+1}b_i\leq \frac1{k+1}\sum_{i=1}^{k+1}a_i$, $a_i,b_i\in B_{\nonsp}$ and $a_i\prec b_i$, $i=1,2,\dots, k+1$. Then $\mu(a_i)=\mu(b_i)$ for all $i=1,2,\dots, k+1$.

If in addition $\mu(\infty,x)=0$ then $a_i=b_i$, for all $i=1,2,\dots, k+1$. 
\end{lemma}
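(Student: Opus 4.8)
The plan is to first determine the singular value functions of the $b_i$'s from $k$-extremality, and then transfer this information to the $a_i$'s through the order relation and the majorization hypothesis. Since $\mu(x)$ is $k$-extreme we have $\norme{\mu(x)}=1$, so $x\in S_{\nonsp}$; as $x=\frac1{k+1}\sum_{i=1}^{k+1}b_i$ with $b_i\in B_{\nonsp}$, Lemma~\ref{lm:mueq} applies and gives
\[
\mu(x)=\frac1{k+1}\sum_{i=1}^{k+1}\mu(b_i),
\]
and in particular $\norme{\mu(b_i)}=1$ for every $i$ by the triangle inequality.

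For the first assertion I would exploit $x\le y$, where $y:=\frac1{k+1}\sum_{i=1}^{k+1}a_i$. Since the operators involved are positive, $0\le x\le y$ gives $\mu(x)\le\mu(y)$ pointwise, and subadditivity of the generalized singular value function gives $\mu(y)\prec\frac1{k+1}\sum_{i=1}^{k+1}\mu(a_i)$. On the other hand, summing the hypotheses $a_i\prec b_i$ (that is, $\int_0^t\mu(a_i)\le\int_0^t\mu(b_i)$ for all $t$) and using the first display yields $\int_0^t\frac1{k+1}\sum_{i=1}^{k+1}\mu(a_i)\le\int_0^t\mu(x)$ for all $t>0$. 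Combining these,
\[
\int_0^t\mu(x)\le\int_0^t\mu(y)\le\int_0^t\frac1{k+1}\sum_{i=1}^{k+1}\mu(a_i)\le\int_0^t\mu(x),\qquad t>0,
\]
so all four primitives coincide identically. From $\mu(x)\le\mu(y)$ pointwise together with equality of their primitives we get $\mu(x)=\mu(y)$; and from $\int_0^t\sum_i\mu(a_i)=\int_0^t\sum_i\mu(b_i)$ for all $t$ together with the termwise inequalities $\int_0^t\mu(a_i)\le\int_0^t\mu(b_i)$, each of these must be an equality term by term, hence $\mu(a_i)=\mu(b_i)$ for all $i$ (two decreasing right-continuous functions with equal primitives agree). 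As a by-product, $\mu(x)=\mu(y)=\frac1{k+1}\sum_i\mu(a_i)$.

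Now suppose in addition $\mu(\infty,x)=0$. From $\mu(x)=\frac1{k+1}\sum_i\mu(b_i)$ we get $\mu(\infty,x)=\frac1{k+1}\sum_i\mu(\infty,b_i)$, so $\mu(\infty,b_i)=\mu(\infty,a_i)=0$ for every $i$. Since $0\le x\le\frac{x+y}2\le y$ and $\mu(x)=\mu(y)$, positivity and monotonicity of $\mu$ force $\mu\!\left(\frac{x+y}2\right)=\mu(x)=\mu(y)$; as $x,y\in L_1\Mtau+\M$, Proposition~\ref{prop:singfun}(3) gives $x=y$, i.e. $\sum_i a_i=\sum_i b_i$. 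Set $c_i=\frac{a_i+b_i}2$; then $\mu(c_i)\prec\frac12(\mu(a_i)+\mu(b_i))$, so $c_i\in B_{\nonsp}$, and $\frac1{k+1}\sum_i c_i=\frac12 x+\frac12 y=x$, whence Lemma~\ref{lm:mueq} yields $\mu(x)=\frac1{k+1}\sum_i\mu(c_i)$. Since $\mu(c_i)\prec\frac12(\mu(a_i)+\mu(b_i))=\mu(b_i)$ termwise while $\sum_i\mu(c_i)=\sum_i\mu(b_i)$, the same bookkeeping as before gives $\mu(c_i)=\mu(a_i)=\mu(b_i)$ for each $i$. A final application of Proposition~\ref{prop:singfun}(3) to the pair $a_i,b_i$ — legitimate since $\mu(\infty,a_i)=\mu(\infty,b_i)=0$ and $\mu\!\left(\frac{a_i+b_i}2\right)=\mu(a_i)=\mu(b_i)$ — yields $a_i=b_i$ for all $i$.

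I expect the delicate points to be, first, the explicit use of positivity of the operators, which is what makes $0\le x\le y\Rightarrow\mu(x)\le\mu(y)$ and $\frac{x+y}2\ge0$ available; this is the hypothesis one must lean on, since for general self-adjoint $x\le y$ the order carries no information about $\mu(x)$ versus $\mu(y)$. Second, in the converse direction, the main trick is the idea of averaging $a_i$ with $b_i$ and re-invoking Lemma~\ref{lm:mueq} precisely in order to manufacture the equality $\mu\!\left(\frac{a_i+b_i}2\right)=\mu(a_i)=\mu(b_i)$ demanded by Proposition~\ref{prop:singfun}(3); without first having proved $x=y$ this averaging would not close up. The remaining majorization bookkeeping (summing $\prec$-inequalities and reading off equality term by term) is routine, provided it is carried out at the level of the primitives $t\mapsto\int_0^t\mu(\cdot)$ rather than of the functions themselves.
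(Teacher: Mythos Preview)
Your proof is correct and essentially matches the paper's. The only difference is in the second part: rather than first proving $x=y$ via Proposition~\ref{prop:singfun}(3) and then applying Lemma~\ref{lm:mueq} to $c_i=\tfrac{a_i+b_i}{2}$, the paper simply re-applies the already-proved first part with $c_i$ in the role of $a_i$ (noting $x=\tfrac1{k+1}\sum b_i\le\tfrac1{k+1}\sum c_i$ and $c_i\prec b_i$), which yields $\mu(c_i)=\mu(b_i)$ directly without the detour through $x=y$.

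Your remark about positivity is well taken: the step $x\le y\Rightarrow\mu(x)\le\mu(y)$ genuinely needs $x\ge 0$, and the paper's proof uses this implicitly just as you do. Note, however, that only $x\ge 0$ is required (this forces $y\ge x\ge 0$); the individual $a_i,b_i$ need not be positive, and indeed are not in the applications (e.g.\ $b_j=\Rep(x_j)$ in Lemma~\ref{lm:5}). In every use of this lemma in the paper the operator playing the role of $x$ is $|x|$, so the hypothesis is available.
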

\begin{proof}
By Lemma \ref{lm:mueq} and in view of $a_i\prec b_i$, $\mu(x)=\frac1{k+1}\sum_{i=1}^{k+1}\mu(b_i)\leq \mu(\frac1{k+1}\sum_{i=1}^{k+1}a_i)\prec  \frac1{k+1}\sum_{i=1}^{k+1}\mu(a_i)\prec\frac1{k+1}\sum_{i=1}^{k+1}\mu(b_i)=\mu(x)$.
Hence  $\mu(x)=\frac1{k+1}\sum_{i=1}^{k+1}\mu(a_i)=\frac1{k+1}\sum_{i=1}^{k+1}\mu(b_i)$, and so  for all $t>0$,
\[
\sum_{i=1}^{k+1}\int_0^t \left(\mu(b_i)-\mu(a_i)\right)=0.
\]
Since for all $i=1,2,\dots, k+1$, $\mu(a_i)\prec \mu(b_i)$ we have that $\int_0^t \left(\mu(b_i)-\mu(a_i)\right)\geq 0$, $t>0$. Therefore $\mu(a_i)=\mu(b_i)$, $i=1,2,\dots, k+1$.

Suppose now that $\mu(\infty,x)=0$. Then  clearly $\mu(\infty,a_i)=\mu(\infty,b_i)=0$ for all $i=1,2,\dots, k+1$. Note that
\[
x=\frac1{k+1}\sum_{i=1}^{k+1}b_i\leq \frac1{k+1}\sum_{i=1}^{k+1}\frac{a_i+b_i}2,
\]
where $\frac{a_i+b_i}2\prec b_i$.
By the previous argument, $\mu(b_i)=\mu\left(\frac{a_i+b_i}2\right)$. Therefore $\mu(a_i)=\mu(b_i)=\mu\left(\frac{a_i+b_i}2\right)$, and Proposition \ref{prop:singfun} (3) implies that $a_i=b_i$, $i=1,2,\dots, k+1$.
\end{proof}
\begin{lemma}
\label{lm:8}
Suppose that $\mu(x)$ is $k$-extreme, $\abs{x}\geq \mu(\infty,x)s(x)$, and $x=\frac1{k+1}\sum_{i=1}^{k+1}a_i= \frac1{k+1}\sum_{i=1}^{k+1}b_i$, $a_i,b_i\in B_{\nonsp}$. If $a_i\prec b_i$ and $a_i,b_i\geq 0$ for all $i=1,2,\dots, k+1$ then $a_i=b_i$, $i=1,2,\dots, k+1$. 
\end{lemma}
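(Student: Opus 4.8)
The strategy is to use Lemma \ref{lm:4} to collapse both representations of $x$ to a single family of singular value functions, and then --- in the harder case $\mu(\infty,x)>0$ --- to strip off the common constant tail $\mu(\infty,x)s(x)$ so that Proposition \ref{prop:singfun}(3) becomes applicable.

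Since $x=\frac1{k+1}\sum_{i}a_i$ is a convex combination of positive operators, $x\ge 0$, hence $\abs{x}=x$ and the hypothesis reads $x\ge\mu(\infty,x)s(x)$. As $\mu(x)$ is $k$-extreme it lies in $S_E$, so $x\in S_{\nonsp}$, and Lemma \ref{lm:mueq} applied to either decomposition gives $\mu(x)=\frac1{k+1}\sum_{i}\mu(a_i)=\frac1{k+1}\sum_{i}\mu(b_i)$; in particular $\norme{\mu(a_i)}=\norme{\mu(b_i)}=1$, so all $a_i,b_i$ lie on $S_{\nonsp}$. Now I would apply Lemma \ref{lm:4} twice. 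First, with the $b_i$ in the ``$b$''-slot and the $a_i$ in the ``$a$''-slot (the operator comparison $\frac1{k+1}\sum b_i\le\frac1{k+1}\sum a_i$ holding with equality, and $a_i\prec b_i$ by hypothesis), it yields $\mu(a_i)=\mu(b_i)$ for each $i$. Given this, $\frac{a_i+b_i}2\prec a_i$, $\frac{a_i+b_i}2\in S_{\nonsp}$, and $\frac1{k+1}\sum_i\frac{a_i+b_i}2=x$, so a second application of Lemma \ref{lm:4}, now with $\frac{a_i+b_i}2$ in the ``$a$''-slot and $a_i$ in the ``$b$''-slot, gives $\mu\bigl(\tfrac{a_i+b_i}2\bigr)=\mu(a_i)=\mu(b_i)$. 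If $\mu(\infty,x)=0$, then each $\mu(\infty,a_i)=\mu(\infty,b_i)=0$, and Proposition \ref{prop:singfun}(3) immediately forces $a_i=b_i$ --- this is exactly the second assertion of Lemma \ref{lm:4}.

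Assume henceforth $\mu(\infty,x)>0$, so $\tauone=\infty$. Put $c=\mu(\infty,x)$ and $c_i=\mu(\infty,a_i)=\mu(\infty,b_i)$; letting $t\to\infty$ in $\mu(x)=\frac1{k+1}\sum_i\mu(a_i)$ gives $c=\frac1{k+1}\sum_i c_i$. I would then apply Proposition \ref{lm:3} to the three decompositions $x=\frac1{k+1}\sum a_i=\frac1{k+1}\sum b_i=\frac1{k+1}\sum\frac{a_i+b_i}2$ (legitimate, since the summands are positive and on $S_{\nonsp}$, and $\mu(\infty,\frac{a_i+b_i}2)=c_i$): this yields $a_i\ge c_is(a_i)$, $b_i\ge c_is(b_i)$, $\frac{a_i+b_i}2\ge c_is\bigl(\tfrac{a_i+b_i}2\bigr)$, together with the key support identification $s(a_i)=s(b_i)=s\bigl(\tfrac{a_i+b_i}2\bigr)=s(x)$ whenever $c_i>0$. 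Set $\tilde a_i=a_i-c_is(a_i)\ge 0$ and $\tilde b_i=b_i-c_is(b_i)\ge 0$. By Proposition \ref{prop:singfun}(2), $\mu(\tilde a_i)=\mu(a_i)-c_i=\mu(b_i)-c_i=\mu(\tilde b_i)$, both with vanishing tail, and likewise $\mu\bigl(\tfrac{a_i+b_i}2-c_is(\tfrac{a_i+b_i}2)\bigr)=\mu\bigl(\tfrac{a_i+b_i}2\bigr)-c_i=\mu(\tilde a_i)$. Using the support identification, that last operator equals $\frac{\tilde a_i+\tilde b_i}2$ (both when $c_i>0$ and, trivially, when $c_i=0$), so $\mu\bigl(\tfrac{\tilde a_i+\tilde b_i}2\bigr)=\mu(\tilde a_i)=\mu(\tilde b_i)$. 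Since $\tilde a_i,\tilde b_i\in\nonsp\subset L_1\Mtau+\M$ have zero tail, Proposition \ref{prop:singfun}(3) gives $\tilde a_i=\tilde b_i$; finally $a_i-c_is(a_i)=b_i-c_is(b_i)$ together with $s(a_i)=s(b_i)$ when $c_i>0$ (and $\tilde a_i=a_i$, $\tilde b_i=b_i$ when $c_i=0$) forces $a_i=b_i$ for every $i$.

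The two applications of Lemma \ref{lm:4} and the rearrangements of singular value functions are routine; the delicate point is the tail bookkeeping when $\mu(\infty,x)>0$. One must know that $c=\frac1{k+1}\sum_i c_i$ \emph{exactly} --- so that subtracting $c_is(\cdot)$ from each summand produces a genuine decomposition of $x-\mu(\infty,x)s(x)\in S_0\Mtau$ --- and that the support projections of $a_i$, $b_i$ and $\frac{a_i+b_i}2$ all collapse to $s(x)$ on the tail-carrying indices, which is precisely what makes this subtraction compatible with the averaging. Both facts are supplied by Lemma \ref{lm:mueq} and Proposition \ref{lm:3}, so once those are correctly invoked the argument closes.
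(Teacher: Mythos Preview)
Your proof is correct and follows essentially the same route as the paper: apply Lemma~\ref{lm:4} to the two (and the averaged) decompositions to obtain $\mu(a_i)=\mu(b_i)=\mu\bigl(\tfrac{a_i+b_i}{2}\bigr)$, then in the case $\mu(\infty,x)>0$ invoke Proposition~\ref{lm:3} to pin down the supports, subtract the common tail $c_is(x)$, and finish with Proposition~\ref{prop:singfun}(3). The only cosmetic difference is that the paper subtracts $C_is(x)$ uniformly (using the support identification up front), whereas you subtract $c_is(a_i)$, $c_is(b_i)$ and then observe these coincide; the content is identical.
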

\begin{proof}
Note that if $a_i\geq 0 $ for all $i=1,2,\dots, k+1$, then $x=\abs{x}$.
Since $\abs{x}=\frac1{k+1}\sum_{i=1}^{k+1}a_i= \frac1{k+1}\sum_{i=1}^{k+1}b_i=\frac1{k+1}\sum_{i=1}^{k+1}\frac{a_i+b_i}{2}$, by Lemma \ref{lm:4},
\[
\mu(a_i)=\mu(b_i)=\mu\left(\frac{a_i+b_i}{2}\right).
\]
 Denote by $C_i=\mu(\infty, a_i)=\mu(\infty, b_i)=\mu(\infty, \frac{a_i+b_i}2)$. 
 In case of $\mu(\infty, x)>0$,  Proposition \ref{lm:3} guarantees that  if $C_i>0$ then $a_i\geq C_is(x)$, $b_i\geq C_is(x)$ and $\frac{a_i+b_i}2\geq C_is(x)$, and also $s(a_i)=s(b_i)=s(\frac{a_i+b_i}2)=s(x)$. Hence in view of Proposition \ref{prop:singfun} (2),
\begin{align*}
\mu(a_i-C_is(x))&=\mu(a_i)-C_i=\mu(b_i)-C_i=\mu(b_i-C_is(x))=\mu\left(\frac{a_i+b_i}2\right)\\
-C_i&=\mu\left(\frac{a_i+b_i}2-C_is(x)\right)=\mu\left(\frac{a_i-C_is(x)+b_i-C_is(x)}2\right).
\end{align*}
Clearly if $C_i=0$ the above equalities hold.  We have now that $a_i-C_is(x), b_i-C_is(x)\in S_0\Mtau$, and so by Proposition \ref{prop:singfun} (3) it follows that $a_i=b_i$, $i=1,2, \dots, k+1$.
\end{proof}
\begin{lemma}
\label{lm:5}
Suppose that $\mu(x)$ is $k$-extreme, $\abs{x}=\frac1{k+1}\sum_{i=1}^{k+1}x_i$, $x_i\in B_{\nonsp}$ and either $\mu(\infty,x)=0$ or $\abs{x}\geq \mu(\infty,x)s(x)$. Then $n(x)x_i=x_in(x)=0$ if and only if $x_i\geq 0$, $i=1,2,\dots, k+1$.
\end{lemma}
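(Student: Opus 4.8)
The plan is to establish the nontrivial implication, that $n(x)x_i=x_in(x)=0$ for all $i$ forces each $x_i\geq 0$; the converse is immediate, since $\abs{x}n(x)=0$ gives $0=n(x)\abs{x}n(x)=\tfrac1{k+1}\sum_i n(x)x_in(x)$, and if every $x_i\geq 0$ then each summand $n(x)x_in(x)=(x_i^{1/2}n(x))^*(x_i^{1/2}n(x))$ is positive, hence vanishes, so $x_i^{1/2}n(x)=0$ and therefore $x_in(x)=n(x)x_i=0$.

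For the forward direction, the first step is a reduction to the case $s(x)=\one$. The hypothesis $n(x)x_i=x_in(x)=0$ says $x_i=s(x)x_is(x)$ for every $i$, and $\abs{x}=s(x)\abs{x}s(x)$ holds automatically, so all of $\abs{x}$ and the $x_i$ lie in $s(x)S\Mtau s(x)$; transporting the identity $\abs{x}=\tfrac1{k+1}\sum_i x_i$ through the $*$-isomorphism $S(\M_{s(x)},\tau_{s(x)})\cong s(x)S\Mtau s(x)$ and using $\mu^{\tau_{s(x)}}(y)=\mu(s(x)ys(x))$, we may assume $x\geq 0$ with $s(x)=\one$ (replacing $x$ by $\abs{x}$ viewed in $\M_{s(x)}$), while $\mu(x)$ remains $k$-extreme in $B_E$ and each $x_i\in B_{\nonsp}$. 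Under this reduction the standing hypothesis ``$\mu(\infty,x)=0$ or $\abs{x}\geq\mu(\infty,x)s(x)$'' becomes exactly $x\geq\mu(\infty,x)\one$.

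Next I would pass to real parts. Set $y_i=\RE x_i=\tfrac12(x_i+x_i^*)$ and $z_i=\IM x_i$; since $\mu(x_i^*)=\mu(x_i)$ we get $\norms{y_i}\leq 1$, and $x=x^*$ gives $\sum_i z_i=0$, hence $x=\tfrac1{k+1}\sum_i y_i$. Applying Lemma~\ref{lm:mueq} to the two decompositions $x=\tfrac1{k+1}\sum_i x_i$ and $x=\tfrac1{k+1}\sum_i y_i$ yields $\mu(x)=\tfrac1{k+1}\sum_i\mu(x_i)=\tfrac1{k+1}\sum_i\mu(y_i)$; combining this with $\mu(y_i)\prec\mu(x_i)$ and comparing partial integrals term by term forces $\mu(y_i)=\mu(x_i)$ for every $i$ (so in particular $x_i,y_i\in S_{\nonsp}$). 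To get $y_i\geq 0$, put $a=\tfrac1{k+1}\sum_i\abs{y_i}$. Then $a\geq\tfrac1{k+1}\sum_i y_i=x\geq 0$, whence $\mu(x)\leq\mu(a)$ pointwise, while $\mu(a)\prec\tfrac1{k+1}\sum_i\mu(\abs{y_i})=\tfrac1{k+1}\sum_i\mu(y_i)=\mu(x)$; the two facts together give $\mu(a)=\mu(x)$. If $a\neq x$, then $a-x$ is a nonzero positive operator, and since $x\geq\mu(\infty,x)\one$, Proposition~\ref{prop:singfun}~(5) produces $s>0$ with $\mu(s,x)<\mu(s,x+(a-x))=\mu(s,a)$, contradicting $\mu(a)=\mu(x)$. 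Hence $a=x$, i.e. $\sum_i(\abs{y_i}-y_i)=0$; as each $\abs{y_i}-y_i\geq 0$ we conclude $y_i=\abs{y_i}\geq 0$.

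It remains to show $z_i=0$. We now have $y_i\geq 0$ and $\mu(y_i+iz_i)=\mu(x_i)=\mu(y_i)$. If $\mu(\infty,y_i)=0$ then $y_i\geq 0=\mu(\infty,y_i)\one$, and Proposition~\ref{prop:singfun}~(6) gives $z_i=0$. If $\mu(\infty,y_i)>0$, then $\mu(\infty,x)=\tfrac1{k+1}\sum_j\mu(\infty,y_j)>0$, so Proposition~\ref{lm:3} applies to the decomposition $x=\tfrac1{k+1}\sum_j y_j$ (with $y_j\geq 0$ in $S_{\nonsp}$, $\mu(x)$ $k$-extreme, and $x\geq\mu(\infty,x)\one=\mu(\infty,x)s(x)$) and yields $y_i\geq\mu(\infty,y_i)s(y_i)$ with $s(y_i)=s(x)=\one$, i.e. $y_i\geq\mu(\infty,y_i)\one$; again Proposition~\ref{prop:singfun}~(6) gives $z_i=0$. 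Thus $x_i=y_i\geq 0$ in the compressed algebra, and since the isomorphism of the reduction step is positivity-preserving, $x_i\geq 0$ in $\M$. I expect the delicate point to be the compression step — checking that $k$-extremeness of $\mu(x)$, the ball membership of the $x_i$, and the positivity inequality all transfer faithfully to $(\M_{s(x)},\tau_{s(x)})$ — since it is precisely the reduction to $s(x)=\one$ that makes $x\geq\mu(\infty,x)\one$ available for Proposition~\ref{prop:singfun}~(5) and Proposition~\ref{lm:3}.
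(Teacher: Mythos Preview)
Your proof is correct and follows the same architecture as the paper's: first show $\RE x_i\geq 0$ via Proposition~\ref{prop:singfun}\,(5), then kill $\IM x_i$ via Proposition~\ref{prop:singfun}\,(6), invoking Proposition~\ref{lm:3} to supply the lower bound $y_i\geq\mu(\infty,y_i)\one$ when $\mu(\infty,y_i)>0$. Your derivation of $\mu(y_i)=\mu(x_i)$ from the two applications of Lemma~\ref{lm:mueq} and $\mu(y_i)\prec\mu(x_i)$ is exactly the content of Lemma~\ref{lm:4}, which the paper invokes directly.

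The one genuine methodological difference is your reduction to $s(x)=\one$ by compressing to $\M_{s(x)}$, whereas the paper stays in $\M$ and instead adds $\mu(\infty,x)n(x)$ (respectively $C_jn(x)$) to $\abs{x}$ (respectively $\RE x_j$), using Corollary~\ref{cor:1} to preserve singular value functions while manufacturing the inequality $\geq C\one$ needed for Proposition~\ref{prop:singfun}\,(5),(6). Your compression is legitimate: when $\mu(\infty,x)>0$ one has $\tau(s(x))=\infty=\tauone$, so the underlying interval for $E$ does not change; when $\mu(\infty,x)=0$ the possibly smaller interval $[0,\tau(s(x)))$ causes no trouble since the needed lower bounds are trivially zero and $k$-extremeness passes to the isometric subspace of $E$ supported there. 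The paper's ``$+Cn(x)$'' device sidesteps this bookkeeping entirely, but both routes arrive at the same place.
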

\begin{proof}
Assume first that for all $j=1,2,\dots, k+1$, $n(x)x_j=x_jn(x)=0$. Denote by $\Rep(x_j)=\dfrac{x_j+x_j^*}{2}$, $j=1,2,\dots, k+1$.
Since $\Rep(x_j)\leq \abs{\Rep(x_j)}$ we have that
\begin{equation}
\label{eq:lm5.1}
\abs{x}=\frac1{k+1}\sum_{j=1}^{k+1}x_j
=\frac1{k+1}\sum_{j=1}^{k+1}\Rep(x_j)\leq \frac1{k+1}\sum_{j=1}^{k+1}\abs{\Rep(x_j)}.
\end{equation}
By Lemma \ref{lm:mueq}, $\mu(x)\leq \mu(\frac1{k+1}\sum_{j=1}^{k+1}\abs{\Rep(x_j)})
\prec\frac1{k+1}\sum_{j=1}^{k+1}\mu(\Rep(x_j))=\mu(x)$, and so 
\[
\mu(x)=\mu\left(\frac1{k+1}\sum_{j=1}^{k+1}
\abs{\Rep(x_j)}\right).
\]
By assumption $x_jn(x)=n(x)x_j=0$, and so $\Rep(x_j)n(x)=n(x)\Rep(x_j)=0$. Thus
$n(x)\left(\frac1{k+1}\sum_{j=1}^{k+1}\Rep(x_j)\right)
=\left(\frac1{k+1}\sum_{j=1}^{k+1}\Rep(x_j)\right)n(x)=0$.
Denote by $C=\mu(\infty,x)=\mu\left(\infty,\frac1{k+1}\sum_{j=1}^{k+1}\abs{\Rep(x_j)}\right)$. By Corollary \ref{cor:1}, 
\begin{align*}
\mu(\abs{x}+Cn(x))&=\mu(x)=\mu\left(\frac1{k+1}
\sum_{j=1}^{k+1}\abs{\Rep(x_j)}\right)\\
&=\mu\left(\frac1{k+1}\sum_{j=1}^{k+1}\abs{\Rep(x_j)}
+Cn(x)\right).
\end{align*}
Since $C\one\leq\abs{x}+Cn(x)\leq \frac1{k+1}\sum_{j=1}^{k+1}\abs{\Rep(x_j)}+Cn(x)$, Proposition \ref{prop:singfun} (5) implies that $\abs{x}=\frac1{k+1}\sum_{j=1}^{k+1}\abs{\Rep(x_j)}
=\frac1{k+1}\sum_{j=1}^{k+1}\Rep(x_j)$.
Since for all $j=1,2,\dots, k+1$, $\Rep(x_j)\leq \abs{\Rep(x_j)}$ we get the equality $\Rep(x_j)=\abs{\Rep(x_j)}$.  

We will show next that $\Imp(x_j)=0$ and therefore $x_j=\Rep(x_j)$.
Note that $\Rep(x_j)\prec x_j$, $j=1,2,\dots,k+1$. Thus by Lemma \ref{lm:4} and (\ref{eq:lm5.1}) we have $\mu(x_j)=\mu(\Rep(x_j))$. Let $C_j=\mu(\infty,x_j)=\mu(\infty,\Rep(x_j))$, $j=1,2,\dots, k+1$. Then 
\[
\mu(x_j+C_jn(x))=\mu(x_j)=\mu(\Rep(x_j))=\mu(\Rep(x_j)
+C_jn(x)), 
\]
and therefore
\[
\mu(\Rep(x_j)+C_jn(x)+i\Imp(x_j))=\mu(\Rep(x_j)+C_jn(x)).
\]
Observe that by (\ref{eq:lm5.1}), if $\mu(\infty, x)>0$ and $C_j>0$, Proposition \ref{lm:3} implies that $\Rep(x_j)+C_jn(x)\geq C_j\one$. Clearly, the same is true for $C_j=0$ and also for the case $\mu(\infty, x)=0$ since $\mu(x)=\frac1{k+1}\sum_{j=1}^{k+1}\mu(x_j)$. Thus by Proposition \ref{prop:singfun} (6) for all $j=1,2,\dots, k+1$, $\Imp(x_j)=0$ and $x_j=\Rep(x_j)\geq 0$.

For the converse assume that $x_j\geq 0$, for all $j=1,2,\dots, k+1$. Then $0=n(x)\abs{x}n(x)=\frac1{k+1}\sum_{j=1}^{k+1}n(x)x_jn(x)$.
Since $n(x)x_jn(x)\geq 0$ it follows that $n(x)x_jn(x)=0$.
Consequently $x_jn(x)=0$ and $n(x)x_j=(x_jn(x))^*=0$, for all $j=1,2,\dots, k+1$ .
\end{proof}
\begin{lemma}
\label{lm:6}
Suppose that $\mu(x)$ is $k$-extreme, $\abs{x}=\frac1{k+1}\sum_{i=1}^{k+1}x_i$, $x_i\in B_{\nonsp}$, $i=1,2,\dots, k+1$, and  $\mu(\infty,x)=0$. Then $x_i\geq 0$ for all $i=1,2,\dots, k+1$.
\end{lemma}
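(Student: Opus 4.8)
The plan is to deduce this from Lemma \ref{lm:5}: that lemma already tells us, under the standing hypothesis $\mu(\infty,x)=0$, that $x_i\ge 0$ holds precisely when $n(x)x_i=x_in(x)=0$. So it suffices to show that each $x_i$ is supported under $s(x)$, i.e.\ that $s(x)x_is(x)=x_i$.

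First I would introduce the compressions $y_i:=s(x)x_is(x)$, $i=1,\dots,k+1$. Compression by a projection does not increase the singular value function: for any projection $p$ one has $\mu(pap)\le\mu(ap)\le\mu(a)$ pointwise, each inequality following from $p\le\one$. Hence $\mu(y_i)\le\mu(x_i)$, so that $y_i\in B_{\nonsp}$ and, in particular, $y_i\prec x_i$. On the other hand $\abs{x}=s(x)\abs{x}s(x)$, so compressing the identity $\abs{x}=\frac{1}{k+1}\sum_{i=1}^{k+1}x_i$ by $s(x)$ on both sides gives $\abs{x}=\frac{1}{k+1}\sum_{i=1}^{k+1}y_i$.

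Now I would apply Lemma \ref{lm:4} to the operator $\abs{x}$ --- whose singular value function $\mu(x)$ is $k$-extreme and which satisfies $\mu(\infty,\abs{x})=0$ --- with $a_i=y_i$ and $b_i=x_i$. The hypotheses all hold: $a_i,b_i\in B_{\nonsp}$, $a_i\prec b_i$, and $\frac{1}{k+1}\sum_{i=1}^{k+1}b_i=\abs{x}\le\abs{x}=\frac{1}{k+1}\sum_{i=1}^{k+1}a_i$. The second, stronger conclusion of Lemma \ref{lm:4}, available because $\mu(\infty,x)=0$, then forces $a_i=b_i$, that is $s(x)x_is(x)=x_i$ for each $i$. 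Consequently $n(x)x_i=n(x)s(x)x_is(x)=0$ since $n(x)s(x)=0$, and symmetrically $x_in(x)=0$, so Lemma \ref{lm:5} yields $x_i\ge 0$ for all $i$. The only genuinely delicate point is to invoke Lemma \ref{lm:4} in the right direction --- the compressions $y_i$, being submajorized by the $x_i$, must be taken as the $a_i$ --- together with the elementary contraction property of compressions; everything else is formal.
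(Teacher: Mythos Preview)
Your argument is correct, and it takes a genuinely different route from the paper's proof. You reduce to Lemma~\ref{lm:5} by first showing $s(x)x_is(x)=x_i$; the compressions $y_i=s(x)x_is(x)$ satisfy $y_i\prec x_i$ while summing to the same operator $|x|$, so Lemma~\ref{lm:4} (in its stronger form, valid because $\mu(\infty,x)=0$) forces $y_i=x_i$, and the null-projection conditions $n(x)x_i=x_in(x)=0$ follow formally. Then Lemma~\ref{lm:5} delivers $x_i\ge 0$.

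The paper instead argues directly, without invoking Lemma~\ref{lm:5}. It first compares the decomposition $|x|=\frac{1}{k+1}\sum x_i$ with its adjoint $|x|=\frac{1}{k+1}\sum x_i^*$ and applies Lemma~\ref{lm:4} (again in its stronger form) to conclude $x_i=x_i^*$. With self-adjointness in hand, $|x|=\frac{1}{k+1}\sum x_i\le\frac{1}{k+1}\sum|x_i|$; since the right side has the same singular value function as $|x|$ (by Lemma~\ref{lm:mueq}), Proposition~\ref{prop:singfun}(5) forces equality, and then $x_i\le|x_i|$ with $\sum x_i=\sum|x_i|$ gives $x_i=|x_i|\ge 0$. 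Your approach is slightly more economical in that it lets Lemma~\ref{lm:5} do the positivity work, while the paper's proof is more self-contained and shows explicitly how self-adjointness and then positivity emerge in two separate applications of the equality-in-majorization machinery.
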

\begin{proof}
Consider the equations
\[
\abs{x}=\frac1{k+1}\sum_{i=1}^{k+1}x_i
=\frac1{k+1}\sum_{i=1}^{k+1}x_i^*\quad i=1,2,\dots,k+1.
\]
By Lemma \ref{lm:4}, $x_i=x_i^*$, and consequently
\[
\abs{x}=\frac1{k+1}\sum_{i=1}^{k+1}x_i\leq \frac1{k+1}\sum_{i=1}^{k+1}\abs{x_i}.
\]
By Lemma \ref{lm:mueq}, $\mu(x)=\frac1{k+1}\sum_{i=1}^{k+1}\mu(x_i)$, and so $\mu(x)=\mu( \frac1{k+1}\sum_{i=1}^{k+1}\abs{x_i})$.
As an immediate consequence of Proposition \ref{prop:singfun} (5) we have that
$\abs{x}=\frac1{k+1}\sum_{i=1}^{k+1}\abs{x_i}=
\frac1{k+1}\sum_{i=1}^{k+1}x_i$.  Since $x_i\leq \abs{x_i}$ the equality $x_i=\abs{x_i}$ follows. 
\end{proof}
\begin{lemma}
\label{lm:7}
Let $\mu(x)$ be $k$-extreme and $\abs{x}=\frac1{k+1}\sum_{i=1}^{k+1}\abs{x_i}$, $x_i\in B_{\nonsp}$, $i=1,2,\dots, k+1$.  If $\abs{x}\geq \mu(\infty,x)\one$ then for all $i=1,2,\dots, k+1$, \[
\abs{x_i}e^{\abs{x}}\{\mu(\infty,x)\}=\mu(\infty, x_i)e^{\abs{x}}\{\mu(\infty,x)\}.
\]
\end{lemma}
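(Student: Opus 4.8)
The plan is to deduce the equality from two facts: that the $k$-extremality of $\mu(x)$ propagates through the decomposition via Lemma~\ref{lm:mueq}, and that the inequality $\abs{x_i}\ge\mu(\infty,x_i)\one$ can be extracted from Proposition~\ref{lm:3}; after that a short quadratic-form computation on the eigenprojection $e=e^{\abs{x}}\{\mu(\infty,x)\}$ finishes the job. Write $C=\mu(\infty,x)$ and $C_i=\mu(\infty,x_i)$. Applying Lemma~\ref{lm:mueq} to $\abs{x}=\frac1{k+1}\sum_{i=1}^{k+1}\abs{x_i}$, which is legitimate because $\mu(\abs{x})=\mu(x)$ is $k$-extreme and $\abs{x_i}\in B_{\nonsp}$, gives $\mu(x)=\frac1{k+1}\sum_{i=1}^{k+1}\mu(x_i)$; comparing the $E$-norms of the two sides forces each $x_i\in S_{\nonsp}$, and evaluating this identity at $\infty$ gives $(k+1)C=\sum_{i=1}^{k+1}C_i$.

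The first real point is that $b_i:=\abs{x_i}-C_i\one\ge 0$ for every $i$. If $C=0$ this is immediate, since $(k+1)C=\sum_i C_i$ and $C_i\ge 0$ force all $C_i=0$. If $C>0$, then the hypothesis $\abs{x}\ge C\one$ (here one genuinely uses that the hypothesis is phrased with $\one$ and not merely with $s(x)$) already implies $\ker x=0$, i.e.\ $s(x)=\one$, so $\abs{x}=\frac1{k+1}\sum_i\abs{x_i}\ge C\,s(x)$ with $\mu(\infty,\abs{x})=C>0$. Proposition~\ref{lm:3} then applies to this positive decomposition and yields $\abs{x_i}\ge C_i s(x_i)$ for all $i$, together with $s(x_i)=s(x)=\one$ whenever $C_i>0$; hence $\abs{x_i}\ge C_i\one$ in every case. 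I would also record that $b_i$ is a constant shift of the $\tau$-measurable operator $\abs{x_i}$ and hence itself $\tau$-measurable, so that $b_i^{1/2}e$, $b_ie$ and $e b_i e$ are legitimate elements of $S\Mtau$.

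Finally I would use that $e=e^{\abs{x}}\{C\}$ reduces $\abs{x}$ to the scalar $C$. Indeed $\abs{x}\ge C\one$ gives $e^{\abs{x}}[0,C)=0$ (by Lemma~\ref{lm:cond} when $C>0$, trivially when $C=0$), so $\abs{x}e=Ce$. Multiplying $\abs{x}=\frac1{k+1}\sum_i\abs{x_i}$ on the right by $e$ and then on the left by $e$ gives $Ce=\frac1{k+1}\sum_{i=1}^{k+1}e\abs{x_i}e=\frac1{k+1}\sum_{i=1}^{k+1}\bigl(e b_i e+C_i e\bigr)=\frac1{k+1}\sum_{i=1}^{k+1}e b_i e+Ce$, whence $\sum_{i=1}^{k+1}e b_i e=0$. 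Since every summand $e b_i e$ is positive, each one vanishes; writing $e b_i e=(b_i^{1/2}e)^{\ast}(b_i^{1/2}e)$ we get $b_i^{1/2}e=0$, hence $b_i e=0$, hence $\abs{x_i}e=C_i e$, which is precisely $\abs{x_i}e^{\abs{x}}\{\mu(\infty,x)\}=\mu(\infty,x_i)e^{\abs{x}}\{\mu(\infty,x)\}$.

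The step I expect to be the main obstacle is obtaining $b_i\ge 0$. This cannot be read off directly from $\abs{x}=\frac1{k+1}\sum_i\abs{x_i}$; it must be routed through Proposition~\ref{lm:3}, which in turn requires first establishing $s(x)=\one$ (available here only because the hypothesis involves $\one$ rather than $s(x)$) and handling the split between $C_i>0$ and $C_i=0$. The remaining manipulations — multiplying the decomposition by the projection $e$ and cancelling positive operators whose sum is zero — are routine once one keeps track of the fact that all operators in sight are $\tau$-measurable.
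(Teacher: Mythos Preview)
Your argument is correct. Both proofs begin by invoking Lemma~\ref{lm:mueq} to obtain $\mu(x)=\tfrac1{k+1}\sum_i\mu(x_i)$ and $(k+1)C=\sum_i C_i$, and both ultimately rely on Proposition~\ref{lm:3} to secure $\abs{x_i}\ge C_i\one$ (after first deducing $s(x)=\one$ from $\abs{x}\ge C\one$ when $C>0$). The routes diverge after that. The paper argues by contradiction: it fixes $j$, supposes the projection $p_j=e^{\abs{x}}\{C\}\wedge e^{\abs{x_j}}(s,\infty)$ is nonzero for some $s>C_j$, and derives a strict trace inequality $\tau(\abs{x}p_j)>\mu(\infty,x)\tau(p_j)$ that contradicts $\abs{x}p_j=Cp_j$; from $e^{\abs{x}}\{C\}\wedge e^{\abs{x_j}}(C_j,\infty)=0$ it then obtains $e^{\abs{x}}\{C\}\le e^{\abs{x_j}}\{C_j\}$ and concludes. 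You instead do a direct quadratic-form computation: compress the decomposition by $e=e^{\abs{x}}\{C\}$, use $\abs{x}e=Ce$ and $(k+1)C=\sum_i C_i$ to get $\sum_i e b_i e=0$ with each $b_i=\abs{x_i}-C_i\one\ge0$, and cancel positive summands to reach $b_i e=0$. Your approach is shorter and avoids the trace-and-lattice detour; the paper's approach, on the other hand, produces the slightly finer spectral-projection inclusion $e^{\abs{x}}\{C\}\le e^{\abs{x_i}}\{C_i\}$ en route, though only the operator identity is actually used downstream. Your handling of the case $C=0$ is also uniform with the main argument, whereas the paper treats it separately via Lemma~\ref{lm:5}.
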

\begin{proof}

Observe first that in view of Lemma \ref{lm:mueq}, if $\mu(\infty,x)=0$  then we must have $\mu(\infty,x_i)=0$ for all $i=1,2,\dots, k+1$. Then the hypothesis becomes $\abs{x_i}n(x)=0$, and it follows by Lemma \ref{lm:5}.

Assume now that $\mu(\infty,x)>0$ and therefore $\one=s(x)=e^{\abs{x}}[\mu(\infty,x),\infty)$ (see Lemma \ref{lm:cond}). We will show first that for all $i=1,2,\dots, k+1$ we have
\[
e^{\abs{x}}\{\mu(\infty,x)\}\wedge e^{\abs{x_i}}(\mu(\infty,x_i),\infty)=0.
\]
Fix $j\in\{1,2,\dots, k+1\}$ and assume that for some  $s>\mu(\infty, x_j)$ the projection
\[
p_j=e^{\abs{x}}\{\mu(\infty,x)\}\wedge e^{\abs{x_j}}(s,\infty)\neq 0.
\]
Clearly $\tau(p_j)\leq \tau(e^{\abs{x_j}}(s,\infty))<\infty$ and 
\[
\abs{x}p_j=\abs{x}e^{\abs{x}}\{\mu(\infty,x)\}p_j=\mu(\infty,x)e^{\abs{x}}\{\mu(\infty,x)\}p_j=\mu(\infty,x)p_j.
\]
Moreover, by Proposition \ref{lm:3} we have that for all $i=1,2,\dots, k+1$, $\abs{x_i}\geq \mu(\infty,x_i)s(x_i)$.  We also have that if $\mu(\infty, x_i)>0$ then $s(x_i)=s(x)=e^{\abs{x}}[\mu(\infty,x),\infty)$, and so $p_j\leq s(x_i)$, $i=1,2,\dots,k+1$. Hence it follows that
\[
p_j\abs{x_i}p_j\geq \mu(\infty,x_i)p_js(x_i)p_j=\mu(\infty,x_i)p_j,
\]
and $\tau(\abs{x_i}p_j)=\tau(p_j\abs{x_i}p_j)\geq \mu(\infty,x_i)\tau(p_j)$. Furthermore, 
\[
p_j\abs{x_j}p_j=p_j\abs{x_j}e^{\abs{x_j}}(s,\infty)p_j\geq sp_je^{\abs{x_j}}(s,\infty)p_j=sp_j,
\]
and consequently $\tau(\abs{x_j}p_j)\geq s\tau(p_j)>\mu(\infty,x_j)\tau(p_j)$.

Recall that if $\mu(x)$ is $k$-extreme and $\abs{x}=\frac1{k+1}\sum_{i=1}^{k+1}\abs{x_i}$ then by Lemma \ref{lm:mueq}, $\mu(x)=\frac1{k+1}\sum_{i=1}^{k+1}\mu(x_i)$. Therefore
\begin{align*}
\mu(\infty,x)\tau(p_j)&=\tau(\abs{x}p_j)
=\frac1{k+1}\sum_{i=1}^{k+1}\tau(\abs{x_i}p_j)
=\frac1{k+1}\sum_{\substack{i=1,\,i\neq j}}^{k+1}\tau(\abs{x_i}p_j)\\
+\frac{1}{k+1}\tau(\abs{x_j}p_j)&>\frac1{k+1}\sum_{\substack{i=1,\,i\neq j}}^{k+1}\mu(\infty,x_i)\tau(p_j)+\frac1{k+1}\mu(\infty,x_j)\tau(p_j)\\
&=\frac1{k+1}\sum_{i=1}^{k+1}\mu(\infty,x_i)\tau(p_j)=\mu(\infty,x)\tau(p_j),
\end{align*}
which contradicts the assumption that $p_j\neq 0$. Consequently $e^{\abs{x}}\{\mu(\infty,x)\}\wedge e^{\abs{x_j}}(s,\infty)=0$ for all $s>\mu(\infty,x_j)$ and
\[
e^{\abs{x}}\{\mu(\infty,x)\}\wedge e^{\abs{x_j}}(\mu(\infty,x_j),\infty)=0.
\]
If $\mu(\infty,x_j)=0$, then clearly $e^{\abs{x_j}}(\mu(\infty,x_j),\infty)^{\perp}=e^{\abs{x_j}}\{\mu(\infty, x_j)\}$. On the other hand, if $\mu(\infty,x_j)>0$, then by Proposition \ref{lm:3} and Lemma \ref{lm:cond}, $e^{\abs{x_j}}\{0\}=n(x_j)=n(x)=0$ and $e^{\abs{x_j}}(0,\mu(\infty,x_j))=0$. Hence we also have  $e^{\abs{x_j}}(\mu(\infty,x_j),\infty)^{\perp}=e^{\abs{x_j}}\{\mu(\infty,x_j)\}$. Thus $e^{\abs{x}}\{\mu(\infty,x)\}=e^{\abs{x}}\{\mu(\infty,x)\}\wedge (e^{\abs{x_j}}(\mu(\infty,x_j),\infty)\vee e^{\abs{x_j}}\{\mu(\infty,x_j)\})=e^{\abs{x}}\{\mu(\infty,x)\}\wedge e^{\abs{x_j}}\{\mu(\infty,x_j)\}\leq e^{\abs{x_j}}\{\mu(\infty,x_j)\}$. Therefore
\begin{align*}
x_je^{\abs{x}}\{\mu(\infty,x)\}&
=x_je^{\abs{x_j}}\{\mu(\infty,x_j)\}e^{\abs{x}}
\{\mu(\infty,x)\}\\
&=\mu(\infty,x_j)e^{\abs{x_j}}\{\mu(\infty,x_j)\}e^{\abs{x}}\{\mu(\infty,x)\}\\
&=\mu(\infty,x_j)e^{\abs{x}}\{\mu(\infty,x)\},
\end{align*}
and since $j$ was arbitrary the claim follows.
\end{proof}
\begin{theorem}
\label{thm:noncom5}
Let $\M$ be  a von Neumann algebra with a faithful, normal, $\sigma$- finite trace $\tau$ and $E$ be a strongly symmetric function space. An element  $x\in S_{\nonsp}$ is a $k$-extreme point of $B_{\nonsp}$ whenever $\mu(x)$ is a $k$-extreme point of $B_E$ and one of the following conditions holds:
\begin{itemize}
\item[(i)] $\mu(\infty,x)=0$,
\item[(ii)] $n(x)\mathcal{M}n(x^*)=0$ and $\abs{x}\geq\mu(\infty,x)s(x)$.
\end{itemize}
\end{theorem}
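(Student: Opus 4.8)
Suppose $x=\tfrac1{k+1}\sum_{i=1}^{k+1}x_i$ with $x_i\in S_{\nonsp}$; the task is to show that $x_1,\dots,x_{k+1}$ are linearly dependent. The first step is Lemma~\ref{lm:mueq}, which gives $\mu(x)=\tfrac1{k+1}\sum_{i=1}^{k+1}\mu(x_i)$ with each $\mu(x_i)\in S_E$; since $\mu(x)$ is $k$-extreme in $B_E$, the functions $\mu(x_1),\dots,\mu(x_{k+1})$ are \emph{already} linearly dependent, say $\sum_{i=1}^{k+1}c_i\mu(x_i)=0$ with $(c_i)\neq 0$. The whole point of the argument is to transport this dependence back to the operators $x_i$. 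As in the proof of Proposition~\ref{lm:3}, I may also assume $\M$ is non-atomic, passing to $\mathds{1}\overline\otimes x=\tfrac1{k+1}\sum_{i=1}^{k+1}\mathds{1}\overline\otimes x_i$ inside $E(\mathcal{A},\kappa)$, since all the relevant hypotheses survive this substitution.

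Next I would reduce to the case $x\geq 0$. Writing the polar decomposition $x=u\abs{x}$ and using $u^*u=s(x)$, $\abs{x}=u^*x$ yields $\abs{x}=\tfrac1{k+1}\sum_{i=1}^{k+1}u^*x_i$ with $u^*x_i\in B_{\nonsp}$, and the identity $\mu(\abs{x})=\mu(x)=\tfrac1{k+1}\sum\mu(x_i)$ together with $\mu(u^*x_i)\leq\mu(x_i)$ forces $\mu(u^*x_i)=\mu(x_i)$ for every $i$. One then checks that the remainder $n(x^*)x_i\,n(x)$ vanishes: automatically when $n(x)\M n(x^*)=0$ (case (ii)), since then also $n(x^*)\M n(x)=0$; and when $\mu(\infty,x)=0$ (case (i)), because $\mu(\infty,x_i)=0$ makes the orthogonal $n(x^*)\M n(x)$-block of $x_i^*x_i$ invisible to $\mu(x_i)=\mu(u^*x_i)$. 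Hence $x_i=u(u^*x_i)$, so it suffices to show that the operators $u^*x_i$ are linearly dependent, and these sum to the positive operator $\abs{x}$. Lemmas~\ref{lm:5} and~\ref{lm:6} then give $u^*x_i\geq 0$. Relabeling, we are reduced to: $x\geq 0$, $x=\tfrac1{k+1}\sum x_i$ with $x_i\geq 0$ in $B_{\nonsp}$, $\mu(x)$ $k$-extreme, and (i) or (ii); and we want $\{x_i\}$ linearly dependent.

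For this positive case I would invoke Corollary~\ref{cor:isom}, producing a projection $q\in\{\one,\,s(x),\,p\}$ with $p=e^{\abs{x}}(\mu(\infty,x),\infty)$, a non-atomic commutative $\mathcal{N}\subset q\M q$, and a $*$-isomorphism $V$ with $V\mu(x)=xq$ and $\mu(Vf)=\mu(f)$. Since $q$ is a spectral projection of $x$ it commutes with $x$, so $xq=\tfrac1{k+1}\sum qx_iq$ with $qx_iq\geq 0$ and $qx_iq\prec x_i$; simultaneously $xq=V\mu(x)=\tfrac1{k+1}\sum V(\mu(x_i))$ with $V(\mu(x_i))\geq 0$ and $\mu(V(\mu(x_i)))=\mu(x_i)$. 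Verifying in each of the three cases for $q$ that $\abs{xq}\geq\mu(\infty,xq)\,s(xq)$, Lemma~\ref{lm:8} applies and gives $qx_iq=V(\mu(x_i))$ for all $i$. When $q\in\{\one,s(x)\}$ one has $x_i=s(x)x_is(x)$ (from $x_i\geq 0$ and $xn(x)=0$), hence $x_i=qx_iq=V(\mu(x_i))$; when $q=p<s(x)$, the only missing piece of $x_i$ sits on $e^{\abs{x}}\{\mu(\infty,x)\}$ and Lemma~\ref{lm:7} identifies it as $\mu(\infty,x_i)\,e^{\abs{x}}\{\mu(\infty,x)\}$, so $x_i=V(\mu(x_i))+\mu(\infty,x_i)\,e^{\abs{x}}\{\mu(\infty,x)\}$. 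Applying the coefficients $(c_i)$ with $\sum c_i\mu(x_i)=0$ then gives $\sum c_ix_i=V\!\left(\sum c_i\mu(x_i)\right)+\left(\sum c_i\mu(\infty,x_i)\right)e^{\abs{x}}\{\mu(\infty,x)\}=0$, since $\sum c_i\mu(x_i)=0$ forces $\sum c_i\mu(\infty,x_i)=0$ by taking the limit at $\infty$. Thus $\{x_i\}$ is linearly dependent, and unwinding the reductions finishes the proof.

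The step I expect to be the main obstacle is the reduction to the positive case: one must argue both that the summands $u^*x_i$ are positive and that the partial isometry $u$ loses no information, and this is precisely where the hypothesis of (ii) (or $\mu(\infty,x)=0$ in (i)) is used in an essential way — otherwise the off-diagonal block $n(x^*)x_i\,n(x)$ need not vanish and the decomposition of $\abs{x}$ cannot be lifted back to $x$. The other delicate points are the three-case verification of $\abs{xq}\geq\mu(\infty,xq)s(xq)$ needed to apply Lemma~\ref{lm:8}, and the bookkeeping around the eigenprojection $e^{\abs{x}}\{\mu(\infty,x)\}$, which is exactly what Lemma~\ref{lm:7} is designed to control.
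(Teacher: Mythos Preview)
Your overall architecture matches the paper's: obtain the scalar relation $\sum c_i\mu(x_i)=0$ via Lemma~\ref{lm:mueq}, pass to the non-atomic setting, and in the positive situation use Corollary~\ref{cor:isom} together with Lemmas~\ref{lm:8} and~\ref{lm:7} to identify $qx_iq=V\mu(x_i)$ and transport the relation back. For that positive part your sketch is essentially the paper's argument.

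The genuine gap is in your reduction to the positive case under hypothesis (ii). From $n(x)\M n(x^*)=0$ you correctly get $n(x^*)x_in(x)=0$, but this does \emph{not} yield $n(x^*)x_i=0$, so the inference ``hence $x_i=u(u^*x_i)$'' fails: the block $n(x^*)x_is(x)$ is uncontrolled. For the same reason you cannot invoke Lemma~\ref{lm:5} to conclude $u^*x_i\ge 0$, since that lemma requires $(u^*x_i)n(x)=0$, i.e.\ $u^*x_in(x)=0$, and you only know $n(x^*)x_in(x)=0$, not $s(x^*)x_in(x)=0$. (Your case (i) reasoning can be made rigorous---this is exactly Lemma~\ref{lm:4} applied to $x=\tfrac1{k+1}\sum x_i=\tfrac1{k+1}\sum s(x^*)x_i$ with $s(x^*)x_i\prec x_i$ and $\mu(\infty,x)=0$, which gives $s(x^*)x_i=x_i$ directly.)

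The paper does \emph{not} attempt this reduction in case (ii). It first proves the result under the stronger hypothesis $|x|\ge\mu(\infty,x)\one$, where $n(x)=0$ whenever $\mu(\infty,x)>0$ and Lemmas~\ref{lm:5},~\ref{lm:6} apply cleanly to give $u^*x_i\ge 0$ and $s(x^*)x_i=x_i$. For general (ii) it then pads: since $|x|+\mu(\infty,x)n(x)=\tfrac1{k+1}\sum\bigl(u^*x_is(x)+\mu(\infty,x_i)n(x)\bigr)$ satisfies the stronger hypothesis, the first case yields $\sum C_iu^*x_is(x)=0$ with the \emph{same} coefficients $C_i$, and after multiplying by $u$ one obtains $\sum C_ix_is(x)=0$. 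Running the identical argument for $x^*=\tfrac1{k+1}\sum x_i^*$ with the conjugate coefficients gives $\sum C_is(x^*)x_i=0$. Only now is $n(x^*)x_in(x)=0$ invoked, to conclude $\sum C_ix_i=\sum C_in(x^*)x_in(x)=0$. That two-sided squeeze---controlling both $x_is(x)$ and $s(x^*)x_i$ before appealing to the vanishing corner---is exactly the piece your one-step reduction is missing.
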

\begin{proof}
Assume first that $\M$ is non-atomic. 
Suppose that
\begin{equation}
\label{eq:noncom5.1}
x=\frac{1}{k+1}\sum_{i=1}^{k+1} x_i, \text{ where }x_i\in B_{\nonsp},\quad i=1,2,\dots,k+1.
\end{equation}
Let $x=u\abs{x}$ be the polar decomposition of $x$.
Then
\begin{equation}
\label{eq:noncom5.2}
\abs{x}=\frac1{k+1}\sum_{i=1}^{k+1}u^*x_i
=\frac1{k+1}\sum_{i=1}^{k+1}u^*x_is(x),
\end{equation}
and
\begin{equation}
\label{eq:noncom5.4}
x=\frac{1}{k+1}\sum_{i=1}^{k+1} x_i=\frac{1}{k+1}\sum_{i=1}^{k+1} s(x^*)x_i.
\end{equation}

Consider first the case when $\abs{x}\geq \mu(\infty,x)\one$. Note that if $\mu(\infty, x)>0$ then  $s(x)=\one$ and $uu^*=u^*u=\one$.

If $\mu(\infty,x)=0$ it follows by Lemma \ref{lm:6} and (\ref{eq:noncom5.2}) that $u^*x_i\geq 0$ for all $i=1,2,\dots, k+1$.  Moreover, (\ref{eq:noncom5.4}) combined with Lemma \ref{lm:4} implies that $s(x^*)x_i=x_i$.

If $\mu(\infty,x)>0$ and consequently $n(x)=0$, by Lemma \ref{lm:5} we also have that $u^*x_i\geq 0$. Clearly since $s(x^*)=\one$ we also have $x_i=s(x^*)x_i$ for all $i=1,2,\dots, k+1$. 

Furthermore in either case, $\mu(x_i)=\mu(uu^* x_i)\leq \mu(u^*x_i)\leq \mu(x_i)$, and so $\mu(x_i)=\mu(u^*x_i)$, $i=1,2,\dots, k+1$.
Therefore we always have 
\begin{equation}
\label{eq:noncom5.5}
 u^*x_i\geq 0,\quad s(x^*)x_i=x_i,\quad \text{and}\quad \mu(u^*x_i)=\mu(x_i),\quad i=1,2,\dots, k+1.
\end{equation}

Note that by Lemma \ref{lm:mueq}, $\mu(x)=\frac1{k+1}\sum_{i=1}^{k+1}\mu(x_i)$. Since $\mu(x)$ is $k$-extreme, there exist constants $C_1,C_2,\dots, C_{k+1}$ not all vanishing such that 
\begin{equation}
\label{eq:noncom5.6}
\sum_{i=1}^{k+1}C_i\mu(x_i)=0.
\end{equation}

Consider now the operator $\abs{x}-\mu(\infty,x)\one \in S_0^+\Mtau$ and denote by 
\[
p=s(\abs{x}-\mu(\infty,x)\one)=e^{\abs{x}}(\mu(\infty,x),\infty).
\]
Lets define a projection $q$ in the following way.
\begin{itemize}
\item[{(1)}] $q=\one$ if $\tau(s(x))<\infty$, 
\item[{(2)}] $q=s(x)$ if $\tau(s(x))=\infty$ and $\tau(p)<\infty$,
\item[{(3)}] $q=p$ if $\tau(s(x))=\infty$ and $\tau(p)=\infty$.
\end{itemize}  
 Then by Corollary \ref{cor:isom} there are a non-atomic, commutative von Neumann algebra $\mathcal{N}\subset q\M q$ and a $*$-isomorphism $V$ from $S([0,\tauone),m)$ into $S(\mathcal{N},\tau)$ such that 
\[
V\mu(x)=\abs{x}q\quad\text{and}\quad \mu(V(f))=\mu(f),\,\text{for all } f\in S([0,\tauone),m).
\]
By (\ref{eq:noncom5.2}), 
\[
\abs{x}q=\frac{1}{k+1}\sum_{i=1}^{k+1} qu^*x_iq,
\]
 and since $\mu(\abs{x}q)=\mu(x)$ is $k$-extreme, Lemma \ref{lm:mueq} guarantees that 
\[
\frac{1}{k+1}\sum_{i=1}^{k+1} \mu(x_i)=\mu(x)=\mu(\abs{x}q)=\frac{1}{k+1}\sum_{i=1}^{k+1} \mu(qu^*x_iq).
\]
Clearly $\mu(qu^*x_iq)\leq \mu(x_i)$ and so $\mu(qu^*x_iq)=\mu(x_i)$, for all $i=1,2,\dots, k+1$. Moreover applying $V$ to the above equation we get the following
\[
\frac{1}{k+1}\sum_{i=1}^{k+1} qu^*x_iq=\abs{x}q=V\mu(x)=\frac{1}{k+1}\sum_{i=1}^{k+1}V \mu(qu^*x_iq).
\]
We have  $\mu(V\mu(qu^*x_iq))=\mu(qu^*x_iq)=\mu(x_i)$, and $s(\abs{x}q)=q$. 
By Lemma \ref{lm:8}, $V\mu(qu^*x_iq)=qu^*x_iq$, $i=1,2,\dots, k+1$. Applying now $V$ to (\ref{eq:noncom5.6}),
\[
q\left(\sum_{i=1}^{k+1}C_iu^*x_i \right)q=\sum_{i=1}^{k+1}C_iqu^*x_iq
=\sum_{i=1}^{k+1}C_iV\mu(x_i)=0. \]
Hence $\displaystyle \sum_{i=1}^{k+1}C_iu^*x_i q=\left(\sum_{i=1}^{k+1}C_iu^*x_i \right)q=0$ and in view of (\ref{eq:noncom5.5}), $s(x^*)x_i=x_i$, and consequently
\begin{equation}
\label{eq:noncom5.3}
\sum_{i=1}^{k+1}C_ix_iq=0.
\end{equation}

Case 1. Let $\tau(s(x))<\infty$ and $q=\one$. 
Clearly by (\ref{eq:noncom5.3}),
\[
\sum_{i=1}^{k+1}C_ix_i=0.
\]

Case 2. Assume that $\tau(s(x))=\infty$, $\tau(p)<\infty$ and $q=s(x)$. Note that if $\mu(\infty,x)=0$ then $p=s(x)$. Therefore the case is only possible when $\mu(\infty, x)>0$ and then $s(x)=\one$. Therefore $q=\one$ and again by (\ref{eq:noncom5.3}),
\[
\sum_{i=1}^{k+1}C_ix_i=0.
\]

Case 3.  Assume that $\tau(s(x))=\infty$, $\tau(p)=\infty$ and  $q=p$. If $\mu(\infty, x)=0$,  $q=e^{\abs{x}}(0,\infty)$ and $q^{\perp}=e^{\abs{x}}\{0\}= e^{\abs{x}}\{\mu(\infty,x)\}$. If $\mu(\infty,x)>0$ and so $n(x)=e^{\abs{x}}\{0\}=0$, we also have that $q^{\perp}=e^{\abs{x}}\{\mu(\infty,x)\}$.

 Therefore  by Lemma \ref{lm:7} in view of (\ref{eq:noncom5.2}) and (\ref{eq:noncom5.5}) we get $u^*x_iq^{\perp}=\mu(\infty,x_i)q^{\perp}$. By (\ref{eq:noncom5.6}),
\[
\sum_{i=1}^{k+1}C_iu^*x_iq^{\perp}=\left(\sum_{i=1}^{k+1}C_i\mu(\infty,x_i)\right)q^{\perp}=0.
\]
Since by (\ref{eq:noncom5.5}), $s(x^*)x_i=uu^*x_i=x_i$, the above equation becomes
\[
\sum_{i=1}^{k+1}C_ix_iq^{\perp}=0.
\]
This combined with (\ref{eq:noncom5.3}) implies that 
\[
\sum_{i=1}^{k+1}C_ix_i=0.
\]
Consequently in either case $x_1,x_2,\dots, x_{k+1}$ are linearly dependent, and $x$ is $k$-extreme.

We have shown that if $\mu(x)$ is a $k$-extreme point of $B_E$ and $\abs{x}\geq \mu(\infty,x)\one$, then $x$ is a $k$-extreme point of $B_{\nonsp}$. In particular if $\mu(\infty,x)=0$, the claim follows.

Assume now that $\mu(\infty,x)>0$ and (ii) holds. 
By Lemma \ref{lm:mueq} and equations (\ref{eq:noncom5.1}) and (\ref{eq:noncom5.2}), 
\[
\mu(x)=\frac{1}{k+1}\sum_{i=1}^{k+1} \mu(x_i)=\frac{1}{k+1}\sum_{i=1}^{k+1} \mu(u^*x_is(x)).
\]
 Since $\mu(u^*x_is(x))\leq \mu(x_i)$, the equality $\mu(u^*x_is(x))=\mu(x_i)$ holds for all $i=1,2,\dots,k+1$. Moreover by (\ref{eq:noncom5.1}),
\begin{align*}
\abs{x}+\mu(\infty,x)n(x)&=\frac{1}{k+1}\sum_{i=1}^{k+1} u^*x_is(x)+\mu(\infty,x)n(x)\\
&=\frac{1}{k+1}\sum_{i=1}^{k+1} \left(u^*x_is(x)+\mu(\infty,x_i)n(x)\right).
\end{align*}
Clearly $n(x)u^*x_is(x)=u^*x_is(x)n(x)=0$, and $\mu(\infty,x_i)=\mu(\infty,u^*x_is(x))$. Hence by Corollary \ref{cor:1}, $\mu\left(u^*x_is(x)+\mu(\infty,x_i)n(x)\right)=\mu(u^*x_is(x))=\mu(x_i)$, and so $u^*x_is(x)+\mu(\infty,x_i)n(x)\in B_{\nonsp}$, $i=1,2,\dots,k+1$. Since $\mu(\abs{x}+\mu(\infty,x)n(x))=\mu(x)$ is $k$-extreme, where $\abs{x}+\mu(\infty,x)n(x)\geq \mu(\infty,x)\one$, the previous case implies that 
$\abs{x}+\mu(\infty,x)n(x)$ is $k$-extreme. Furthermore it follows from the first part of the proof that for the constants $C_1,C_2,\dots, C_{k+1}$, not all equal to zero and such that
\[
\sum_{i=1}^{k+1} C_i\mu\left(u^*x_is(x)+\mu(\infty,x_i)n(x)\right)=\sum_{i=1}^{k+1} C_i\mu(x_i)=0,
\]
we have the corresponding equality for operators with the same constants $C_i$'s, that is
\[
\sum_{i=1}^{k+1} C_i\left(u^*x_is(x)+\mu(\infty,x_i)n(x)\right)=0.
\]
Since clearly $\sum_{i=1}^{k+1} C_i\mu(\infty,x_i)=0$, we have in fact that
\[
\sum_{i=1}^{k+1} C_iu^*x_is(x)=0.
\]
Recall that $s(x^*)=uu^*$, $u^*=s(x)u^*=u^*uu^*=u^*s(x^*)$. Multiplying the above sum by $u$ from the left and $u^*$ from the right we get
\[\sum_{i=1}^{k+1} C_is(x^*)x_iu^*s(x^*)=s(x^*)\left(\sum_{i=1}^{k+1} C_ix_iu^*\right)s(x^*)=0.
\]
 Thus $\sum_{i=1}^{k+1} C_ix_iu^*s(x^*)=\sum_{i=1}^{k+1} C_ix_iu^*=0$, and
\begin{equation}
\label{eq:noncom5.7}
\sum_{i=1}^{k+1} C_ix_is(x)=0.
\end{equation}
Observe that  $x^*=\frac1{k+1}\sum_{i=1}^{k+1}x_i^*$ and $\abs{x^*}=u^*x^*$. Repeating the same argument as above for $x^*$ and $x_i^*$'s instead of $x$ and $x_i$'s, respectively, and using the complex conjugate of the equality (\ref{eq:noncom5.6}),  $\sum_{i=1}^{k+1}\overline{C_i}\mu(x_i)=0$ we get $\sum_{i=1}^{k+1} \overline{C_i}x_i^*s(x^*)=0$. Hence
\begin{equation}
\label{eq:noncom5.8}
\sum_{i=1}^{k+1} C_is(x^*)x_i=0.
\end{equation}
Consequently combining (\ref{eq:noncom5.7}) and (\ref{eq:noncom5.8}), 
\[
\sum_{i=1}^{k+1} C_ix_i=\sum_{i=1}^{k+1} C_in(x^*)x_in(x).
\]
 Since  by  \cite[Lemma 3.3]{moj} the assumption $n(x)\M n(x^*)=0$ implies that $n(x)S\Mtau n(x^*)=0$, so $n(x^*)x_in(x)=0$ for all $i=1,2,\dots,k+1$. Therefore
\[
\sum_{i=1}^{k+1} C_ix_i=0,
\]
 and $x$ is a $k$-extreme point of $B_{\nonsp}$.
 
Let us suppose now that $\M$ is not non-atomic, $\mu(x)$ is $k$-extreme, and (i) and (ii) hold. Consider a non-atomic von Neumann algebra $\mathcal{A}$ with the trace $\kappa$, discussed in Remark \ref{rm2}. Then $\mathds{1}\overline\otimes x\in S(\mathcal{A},\kappa)$, $\tilde{\mu}(\mathds{1}\overline\otimes x)=\mu(x)$, and (i), (ii) are satisfied for the operator  $\mathds{1}\overline\otimes x$ (see the proof of Proposition \ref{lm:3}). Hence by the first part of the proof, $\mathds{1}\overline\otimes x$ is a $k$-extreme point of $B_{E(\mathcal{A},\kappa)}$. Since $\mathds{1}\overline\otimes x=\tilde{\pi}(x)$, where $\tilde{\pi}$ is an isometry from $\nonsp$ onto the subspace $E(\Complex \mathds{1}\otimes\M,\kappa)$ of $E(\mathcal{A},\kappa)$, it follows easily that $x$ is a $k$-extreme point of $B_{\nonsp}$.
\end{proof}

Combining now the results of Theorems \ref{thm:noncom4} and \ref{thm:noncom5}, we give a complete characterization of $k$-extreme points in terms of their singular value functions, when $\M$ is a non atomic von Neumann algebra. For $k=1$ we obtain the well-known theorem on extreme points proved in \cite{CKSextreme}.
\begin{theorem}
\label{thm:main}
Let $E$ be a strongly symmetric space on $[0,\tauone)$ and $\M$ be a non-atomic, semifinite von Neumann algebra with a faithful, normal, $\sigma$-finite trace $\tau$. An operator $x$ is a $k$-extreme point of $B_{\nonsp}$ if and only if $\mu(x)$ is a $k$-extreme point of $B_E$ and one of the following, not mutually exclusive, conditions holds:
\begin{itemize}
\item [{(i)}] $\mu(\infty,x)=0$;
\item[{(ii)}] $n(x)\mathcal{M}n(x^*)=0$ and $\abs{x}\geq\mu(\infty,x)s(x)$.
\end{itemize}
\end{theorem}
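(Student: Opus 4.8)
The statement is an equivalence, so the plan is to prove the two implications separately, in each case by invoking one of the two theorems already established, and then to record a remark about hypotheses and the specialization $k=1$.

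For the forward implication I would argue as follows: suppose $x$ is a $k$-extreme point of $B_{\nonsp}$. Since $\M$ is non-atomic and $\tau$ is $\sigma$-finite, the hypotheses of Theorem~\ref{thm:noncom4} are exactly those in force here, so that theorem applies verbatim and yields at once that $\mu(x)$ is a $k$-extreme point of $B_E$ and that (i) or (ii) holds. No further work is required.

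For the converse, assume $\mu(x)$ is a $k$-extreme point of $B_E$ and one of (i), (ii) holds. A non-atomic semifinite von Neumann algebra equipped with a faithful, normal, $\sigma$-finite trace is in particular a von Neumann algebra of the kind allowed in Theorem~\ref{thm:noncom5}, and $E$ is strongly symmetric by the standing assumption of the paper, so the hypotheses of Theorem~\ref{thm:noncom5} are met; that theorem then gives directly that $x$ is a $k$-extreme point of $B_{\nonsp}$. Hence the theorem is precisely the conjunction of Theorems~\ref{thm:noncom4} and~\ref{thm:noncom5}.

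There is no genuine obstacle here beyond checking that the standing hypotheses of the present statement are compatible with, and in fact imply, those of the two cited theorems, which is immediate. I would close by noting two points: non-atomicity of $\M$ is used only for the forward implication, since the sufficiency of the conditions (Theorem~\ref{thm:noncom5}) holds for any semifinite $\M$ with a faithful, normal, $\sigma$-finite trace; and specializing to $k=1$ recovers the characterization of extreme points of $B_{\nonsp}$ obtained in \cite{CKSextreme}.
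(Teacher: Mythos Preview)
Your proposal is correct and matches the paper's approach exactly: the paper states Theorem~\ref{thm:main} immediately after Theorems~\ref{thm:noncom4} and~\ref{thm:noncom5} with the remark ``Combining now the results of Theorems~\ref{thm:noncom4} and~\ref{thm:noncom5}\ldots'' and provides no separate proof. Your observations about non-atomicity being needed only for the forward direction and about the case $k=1$ recovering \cite{CKSextreme} are also consistent with what the paper notes.
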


 Since in the commutative settings for any operator $x$, $n(x)=n(x^*)$, the conditions $n(x)\M n(x^*)=0$ and $\abs{x}\geq\mu(\infty,x)s(x)$  reduce to $\abs{x}\geq \mu(\infty,x)\one$. Therefore by the above theorem applied to the commutative von Neumann algebra $\M=L_{\infty}[0,\tauone)$ the following holds.

\begin{corollary}
\label{cor:2}
Let $E$ be a strongly symmetric function space. The following conditions are equivalent:
\begin{itemize}
\item [{(i)}]  $f$ is a $k$-extreme point of $B_E$;
\item[{(ii)}] $\mu(f)$ is a $k$-extreme point of $B_E$ and $\abs{f}\geq \mu(\infty,f)$.
\end{itemize}
\end{corollary}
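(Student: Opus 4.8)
The plan is to deduce the corollary directly from Theorem~\ref{thm:main} by specializing to the commutative von Neumann algebra $\M=L_\infty[0,\tauone)$ equipped with the trace $\tau$ given by Lebesgue integration, exactly the algebra $\mathcal{N}$ described in the preliminaries. First I would record that this $\M$ is non-atomic and semifinite and that $\tau$ is faithful, normal and $\sigma$-finite (the projections $\chi_{[0,n)}$ increase to $\one$ and have finite trace, so the hypotheses of Theorem~\ref{thm:main} are met even when $\tauone=\infty$). Under the canonical isometric isomorphism $N_f\mapsto f$ between $\nonsp=E(\mathcal{N},\eta)$ and $E$ one has $\mu(N_f)=\mu(f)$, so $f$ is a $k$-extreme point of $B_E$ if and only if $N_f$ is a $k$-extreme point of $B_{E(\mathcal{N},\eta)}$, and the latter is characterized by Theorem~\ref{thm:main} through $k$-extremality of $\mu(N_f)=\mu(f)$ together with condition (i) or (ii).

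Next I would translate conditions (i) and (ii) into the language of functions. For $x=N_f$ with zero set $Z=\{t:f(t)=0\}$ one has $n(x)=n(x^*)=N_{\chi_Z}$ and $s(x)=s(x^*)=N_{\chi_{[0,\tauone)\setminus Z}}$. Since $\M$ is commutative, $n(x)\M n(x^*)=n(x)\M$, which is $\{0\}$ precisely when $n(x)=0$, i.e. when $f\neq 0$ a.e.; and $\abs{x}\geq\mu(\infty,x)s(x)$ reads $\abs{f}\geq\mu(\infty,f)\chi_{[0,\tauone)\setminus Z}$. Hence condition (ii) is equivalent to ``$f\neq 0$ a.e. and $\abs{f}\geq\mu(\infty,f)$''.

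The final step is the elementary observation that ``(i) or (ii)'' collapses to the single inequality $\abs{f}\geq\mu(\infty,f)$. If $\mu(\infty,f)=0$, then (i) holds and $\abs{f}\geq 0=\mu(\infty,f)$ holds trivially. If $\mu(\infty,f)>0$, then (i) fails, and the inequality $\abs{f}\geq\mu(\infty,f)>0$ a.e. already forces $f\neq 0$ a.e., so in this range $\abs{f}\geq\mu(\infty,f)$ is exactly condition (ii). Conversely (i) yields $\abs{f}\geq 0=\mu(\infty,f)$ and (ii) yields $\abs{f}\geq\mu(\infty,f)$ directly. Combining this with the two previous paragraphs gives the asserted equivalence between (i) being a $k$-extreme point of $B_E$ and $\mu(f)$ being a $k$-extreme point of $B_E$ with $\abs{f}\geq\mu(\infty,f)$.

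I do not anticipate a real obstacle here: all the substance is carried by Theorem~\ref{thm:main}, and what remains is bookkeeping with the commutative dictionary for $n(x)$, $s(x)$ and $\mu(x)$. The only points that require mild care are verifying the hypotheses of Theorem~\ref{thm:main} for $L_\infty[0,\tauone)$ (notably $\sigma$-finiteness of the trace when $\tauone=\infty$) and the merging of the two non-mutually-exclusive conditions (i) and (ii) into one inequality.
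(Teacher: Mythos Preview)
Your proposal is correct and follows essentially the same approach as the paper: specialize Theorem~\ref{thm:main} to the commutative algebra $\M=L_\infty[0,\tauone)$ and observe that, since $n(x)=n(x^*)$ there, the disjunction of conditions (i) and (ii) collapses to the single inequality $\abs{f}\geq\mu(\infty,f)$. Your case analysis for merging (i) and (ii) is in fact more explicit than the paper's one-line remark, but the underlying argument is identical.
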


The following simple observation will be useful in relating $k$-rotundity of $E$ and $\nonsp$.

\begin{lemma}
\label{lm:ezero}
If $E$ is a $k$-rotund symmetric function space then $E=E_0$, that is $\mu(\infty,f)=0$ for all $f\in E$. 
\end{lemma}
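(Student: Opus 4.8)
The statement to prove is \lemref{lm:ezero}: if $E$ is a $k$-rotund symmetric function space on $[0,\tauone)$, then $\mu(\infty,f)=0$ for every $f\in E$. The plan is to argue by contradiction: suppose some $f\in E$ has $\mu(\infty,f)=c>0$. Since $\mu(f)\leq\mu(f)$ trivially and $E$ is symmetric, we may as well work with $g=\mu(f)$ itself, a decreasing right-continuous function on $[0,\tauone)$ with $\lim_{t\to\infty}g(t)=c>0$; in particular $\tauone=\infty$ here, since if $\tauone<\infty$ then $\mu(\infty,f)=0$ automatically. Normalizing, assume $g\in S_E$. The goal is to exhibit a representation of $g$ as an average of $k+1$ linearly independent elements of $S_E$, contradicting $k$-rotundity (equivalently, by \propref{lm:2}, to produce $u_1,\dots,u_k$ linearly independent with $g+u_i\in B_E$ and $g-\sum u_i\in B_E$).

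The key idea is that the ``tail at height $c$'' gives infinitely much room in which to perturb without increasing the norm, because for a strongly symmetric space the norm is monotone with respect to the Hardy--Littlewood order $\prec$. First I would fix disjoint intervals far out in the tail where $g$ is essentially flat at height close to $c$: more precisely, choose $0<c'<c$ and a point $t_*$ with $g(t)\in[c',c]$ for all $t\geq t_*$ (possible since $g\downarrow c$), then pick $k+1$ pairwise disjoint intervals $I_0,I_1,\dots,I_k\subset[t_*,\infty)$ each of the same length $\ell$. On these intervals define, for $i=1,\dots,k$,
\[
u_i=\varepsilon\,\bigl(\chi_{I_i}-\chi_{I_0}\bigr),
\]
with $\varepsilon>0$ small enough that $c'+\varepsilon\leq c\leq g(t_*^-)$ and $c'-\varepsilon\geq 0$. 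Then $g+u_i$ and $g-\sum_{i=1}^k u_i = g - \varepsilon\sum_{i=1}^k\chi_{I_i}+k\varepsilon\chi_{I_0}$ are obtained from $g$ by a measure-preserving rearrangement of values that stays within $[0,c]\subset[0,g(t_*^-)]$ on $[t_*,\infty)$ and leaves $g$ untouched on $[0,t_*)$. The crucial verification is that each of these functions $h$ satisfies $h\prec g$, hence $\|h\|_E\leq\|g\|_E=1$ by strong symmetry, so $h\in B_E$; and the functions $u_1,\dots,u_k$ are manifestly linearly independent since the $I_i$ are disjoint. Then \propref{lm:2} yields that $g$ is not $k$-extreme, contradicting $k$-rotundity of $E$.

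The main obstacle is the verification $h\prec g$ for the perturbed functions, i.e.\ checking $\int_0^t\mu(h)\leq\int_0^t\mu(g)=\int_0^t g$ for all $t>0$. For $t\leq t_*$ this is an equality since $h=g$ there. For $t>t_*$ one computes $\int_0^t\mu(h)$ by noting that $\int_0^\infty$-mass is preserved on the tail (the perturbation adds and subtracts $\varepsilon\ell$ in equal amounts), and that rearranging values downward from a flat plateau can only decrease partial integrals; concretely, on $[t_*,\infty)$ we have $h\leq g(t_*^-)$ pointwise and $\int_{t_*}^{s}\mu(h)\leq\int_{t_*}^{s}g$ for all $s$ because $g$ dominates any function on $[t_*,\infty)$ bounded by its plateau value with the same total mass restricted appropriately --- this is exactly the kind of estimate already carried out in \propref{prop:orbit1} and \lemref{lm:orbit2}, and can be cited or mimicked. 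Once this domination is in place, the contradiction is immediate, completing the proof.
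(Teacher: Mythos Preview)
There is a genuine gap. Your key claim --- that $h\prec g$ for each perturbed function $h\in\{g+u_1,\dots,g+u_k,\,g-\sum u_i\}$ with $u_i=\varepsilon(\chi_{I_i}-\chi_{I_0})$ --- is false in general when you work with the decreasing rearrangement $g=\mu(f)$ itself. Take the extreme case $g=c\chi_{(0,\infty)}$ (a constant, which is exactly the function you must handle since $E\neq E_0$ forces $\chi_{(0,\infty)}\in E$). Then $g+u_i$ takes the value $c+\varepsilon$ on $I_i$ (measure $\ell$), $c-\varepsilon$ on $I_0$, and $c$ elsewhere; its decreasing rearrangement is $(c+\varepsilon)\chi_{(0,\ell)}+c\chi_{[\ell,\infty)}$, so $\int_0^\ell\mu(g+u_i)=(c+\varepsilon)\ell>c\ell=\int_0^\ell g$, and $g+u_i\not\prec g$. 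In, say, the Marcinkiewicz space $M_G$ with $G(t)=ct$ one gets $\|g+u_i\|_{M_G}=1+\varepsilon/c>1$, so $g+u_i\notin B_E$. The same obstruction persists even if $g$ has a jump at $t_*$: with $g=2\chi_{(0,1)}+\chi_{[1,\infty)}$, $I_0=(1,2)$, $I_1=(2,3)$, $\varepsilon=\tfrac12$, one computes $\int_0^2\mu(g+u_1)=\tfrac72>3=\int_0^2 g$. Your appeal to \propref{prop:orbit1} and \lemref{lm:orbit2} does not help: those results exploit \emph{strict} slack $\int_0^s f<\int_0^s g$ in the submajorization, whereas here you are comparing $g$ with itself and there is no slack at all.

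The paper's remedy is not to perturb the decreasing function $g$, but to pass to an equimeasurable representative with a zero set. Since $E\neq E_0$ gives $\chi_{(0,\infty)}\in E$, normalize and take $f=\chi_{(k+1,\infty)}\in S_E$; now the interval $(0,k+1)$ lies outside $\supp f$ and provides room. With $u_i=-\tfrac1k\chi_{(0,1)}+\tfrac1k\chi_{(i,i+1)}$ one checks directly that $\mu(f+u_i)=\chi_{(0,\infty)}$ (the value $1$ is still attained on a set of infinite measure, swallowing the small bumps) and $|f-\sum_i u_i|\leq\chi_{(0,\infty)}$ pointwise, so all perturbed functions lie in $B_E$ without any delicate $\prec$ computation. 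Your outline can be repaired in exactly this way: replace ``work with $g=\mu(f)$'' by ``work with a shifted copy of $\mu(f)$ that vanishes on an initial interval of length $>k$,'' and place the perturbations on that zero set.
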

\begin{proof}
Suppose to the contrary that $E\neq E_0$, and so $\chi_{(0,\infty)}\in E$. Without loss of generality we can assume that $\norme{\chi_{(0,\infty)}}=1$. 
Let $f=\chi_{(k+1,\infty)}$. Then $\mu(f)=\chi_{(0,\infty)}$ and $f\in S_E$. For $i=1,2,\dots, k$, define 
\[
u_i=-\frac1k\chi_{(0,1)}+\frac1k\chi_{(i,i+1)}.
\]
Clearly, $\mu(f+u_i)=\chi_{(0,\infty)}$, and so $f+u_i\in S_E$. Moreover, 
\begin{align*}
\abs{f-\sum_{i=1}^k u_i}&=\abs{\chi_{(0,1)}-\frac1k\chi_{(1,k+1)}+\chi_{(k+1,\infty)}}=\chi_{(0,1)}+\frac1k\chi_{(1,k+1)}\\
&+\chi_{(k+1,\infty)}\leq \chi_{(0,\infty)},
\end{align*}
and also $f-\sum_{i=1}^k u_i\in B_E$. However $u_1,u_2,\dots, u_k$ are linearly independent, which in view of Proposition \ref{lm:2} implies that $f$ cannot be $k$-extreme.
\end{proof}
\begin{corollary}
\label{cor:global}
Let $\M$ be a semi-finite von Neumann algebra, with a faithful, normal, semi-finite trace $\tau$. If a symmetric space $E$ is $k$-rotund then $E(\mathcal{M},\tau)$ is $k$-rotund. If in addition $\M$ is non-atomic, then $k$-rotundity of $\nonsp$ implies $k$-rotundity of $E$.
\end{corollary}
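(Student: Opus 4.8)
The plan is to reduce the global statement to the already-established pointwise results, namely Theorem \ref{thm:noncom5} (sufficiency) and Theorem \ref{thm:noncom4} (necessity, in the non-atomic case), together with Lemma \ref{lm:ezero}. For the first implication, suppose $E$ is $k$-rotund. By Lemma \ref{lm:ezero} we have $E=E_0$, i.e. $\mu(\infty,f)=0$ for every $f\in E$; in particular every $x\in\nonsp$ satisfies $\mu(\infty,x)=0$, so condition (i) of Theorem \ref{thm:noncom5} holds automatically for all $x$. Now take an arbitrary $x\in S_{\nonsp}$. Since $E$ is $k$-rotund, $\mu(x)\in S_E$ is a $k$-extreme point of $B_E$; hence by Theorem \ref{thm:noncom5} the operator $x$ is a $k$-extreme point of $B_{\nonsp}$. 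As $x$ was arbitrary, $\nonsp$ is $k$-rotund.

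For the converse, assume $\M$ is non-atomic and $\nonsp$ is $k$-rotund. We must show every $f\in S_E$ is a $k$-extreme point of $B_E$. The natural device is the identification of $E$ with $E(\mathcal N,\eta)$ for the commutative von Neumann subalgebra $\mathcal N$ described in the preliminaries (the multiplication algebra on $L_2[0,\tauone)$), which is isometrically isomorphic to $E$, or alternatively to embed $E$ into $\nonsp$ via the construction in Corollary \ref{cor:isom}/Proposition \ref{isom3} when $\M$ is non-atomic and the trace is $\sigma$-finite. Concretely, given $f\in S_E$ one produces an operator $x\in S_{\nonsp}$ with $\mu(x)=\mu(f)$ and additionally arranged so that $\abs{x}\ge\mu(\infty,x)s(x)$ and $n(x)\M n(x^*)=0$ hold — for instance by taking $x$ positive inside a non-atomic commutative subalgebra via Corollary \ref{cor:isom}, where $\abs{x}=x\ge\mu(\infty,x)s(x)$ is built in. Since $\nonsp$ is $k$-rotund, $x$ is a $k$-extreme point of $B_{\nonsp}$; then Theorem \ref{thm:noncom4} forces $\mu(x)=\mu(f)$ to be a $k$-extreme point of $B_E$. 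Because $f$ and $\mu(f)$ have the same rearrangement and $E$ is rearrangement invariant, and moreover (by Corollary \ref{cor:2}) $k$-extremity of $\mu(f)$ plus $\abs{f}\ge\mu(\infty,f)$ is what is needed — but here we may simply invoke that $\mu(f)\in S_E$ being $k$-extreme and $f=\mu(f)$ (we are free to replace $f$ by its rearrangement since $f\prec\mu(f)\prec f$) gives that $f$ is $k$-extreme. Hence $E$ is $k$-rotund.

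The main subtlety, and the step I would be most careful about, is the converse direction: one must make sure that for an arbitrary $f\in S_E$ there genuinely is an operator $x$ in $S_{\nonsp}$ whose singular value function equals $\mu(f)$, which requires $\M$ non-atomic (and the trace semifinite, which lets us pass to a $\sigma$-finite reduction or use the tensoring trick of Remark \ref{rm2} if needed) so that Proposition \ref{isom3} or Corollary \ref{cor:isom} applies and produces a commutative non-atomic subalgebra carrying a copy of the function $\mu(f)$ as a positive operator. Positivity of this $x$ makes conditions (i)/(ii) transparent: either $\mu(\infty,f)=0$, giving (i), or $x\ge\mu(\infty,x)\one$ on its support with $n(x)=n(x^*)$ in the commutative algebra, giving (ii). Once $x$ is in hand the argument is a clean invocation of Theorems \ref{thm:noncom4} and \ref{thm:noncom5}; the first implication needs nothing beyond Lemma \ref{lm:ezero} and Theorem \ref{thm:noncom5} and is essentially immediate.
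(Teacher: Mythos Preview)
Your first implication has a genuine gap: Theorem \ref{thm:noncom5} is stated only for $\sigma$-finite trace, while Corollary \ref{cor:global} assumes merely semifinite $\tau$. You cannot invoke Theorem \ref{thm:noncom5} directly for arbitrary $x\in S_{\nonsp}$. The paper repairs this exactly: since $E=E_0$ (Lemma \ref{lm:ezero}), the supports $s(x),s(x^*),s(x_i),s(x_i^*)$ are all $\sigma$-finite, so one sets $p=\bigvee_i s(x_i)\vee s(x)\vee s(x_i^*)\vee s(x^*)$, observes $x=pxp$ and $x_i=px_ip$, and then applies Theorem \ref{thm:noncom5} inside $E(\M_p,\tau_p)$ where $\tau_p$ is $\sigma$-finite. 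This reduction is the missing step in your argument.

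Your converse also has a gap, and your route is more circuitous than necessary. You produce $x$ with $\mu(x)=\mu(f)$, deduce from Theorem \ref{thm:noncom4} that $\mu(f)$ is $k$-extreme, and then try to conclude that $f$ itself is $k$-extreme. But Corollary \ref{cor:2} says this requires $\abs{f}\geq\mu(\infty,f)$, and your claim that ``we are free to replace $f$ by its rearrangement'' does not establish this for an arbitrary $f\in S_E$; it simply changes which function you are talking about. (One could close the gap by first proving $E=E_0$ from $k$-rotundity of $\nonsp$, e.g.\ by showing that a projection $p$ with $\tau(p)=\infty$ and $p^\perp\neq 0$ fails condition (ii) of Theorem \ref{thm:main}, but you do not do this.) Also, Theorem \ref{thm:noncom4} again needs $\sigma$-finiteness, and the tensoring trick of Remark \ref{rm2} addresses atoms, not $\sigma$-finiteness. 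The paper avoids all of this: it takes a $\sigma$-finite projection $p$ with $\tau(p)=\tauone$, notes that $E(\M_p,\tau_p)$ is $k$-rotund (as an isometric subspace of the $k$-rotund $\nonsp$), and then uses Proposition \ref{isom1} to embed $E$ isometrically into $E(\M_p,\tau_p)$. Since $k$-rotundity passes to isometric subspaces, $E$ is $k$-rotund. This is both shorter and free of the $\abs{f}\geq\mu(\infty,f)$ obstruction.
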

\begin{proof}
If $E$ is $k$-rotund, then by Lemma \ref{lm:ezero} we have that  $E=E_0$. Let $x\in S_{\nonsp}$ and $x=\frac1{k+1}\sum_{i=1}^{k+1}x_i$, $x_i\in B_{\nonsp}$. Then $\mu(x)$ is $k$-extreme. Since $E=E_0$, $s(x)$ and $s(x_i)$, $i=1,2,\dots, k+1$ are $\sigma$-finite projections. Set $p=\vee_{i=1}^{k+1}s(x_i)\vee s(x)\vee s(x_i^*)\vee s(x^*)$. Then $pxp=ps(x^*)xs(x)p=s(x^*)xs(x)=x$ and $px_ip=x_i$, $i=1,2,\dots,k+1$.  Hence $x,x_i$, $i=1,2,\dots, k+1$, belong to the subspace which is isometric to $E(\M_p,\tau_p)$, where $\tau_p$ is $\sigma$-finite. By Theorem \ref{thm:noncom5} (i), we get that $x$ is $k$-extreme in $E(\M_p,\tau_p)$ and $x_1,x_2,\dots, x_{k+1}$ are linearly dependent in $E(\M_p,\tau_p)$. Hence $x_1,x_2,\dots, x_{k+1}$ are linearly dependent in $E(\M,\tau)$, and so $x$ is $k$- extreme point of $B_{\nonsp}$. Consequently $\nonsp$ is $k$-rotund.

 Suppose now that $\M$ is non-atomic and $E\Mtau$ is $k$-rotund. Then $E(\M_p,\tau_p)$ is $k$-rotund for any projection $p\in P(\M)$. Let $p\in P(\M)$ be a $\sigma$-finite projection with $\tau(p)=\tauone$ (see e.g. \cite[Lemma 1.13]{doctth}). By Proposition \ref{isom1}, $E$ is isometrically embedded in $E(\M_{p},\tau_{p})$ and therefore $E$ is also $k$-rotund.
\end{proof}

\section{Orbits and Marcinkiewicz spaces}
We finish with a characterization of $k$-extreme points in the orbits of functions.
Letting $g\in L_1[0,\alpha)+L_{\infty}[0,\alpha)$, $\alpha\leq \infty$,  the \textit{orbit} of $g$ is the set $\Omega(g)=\{f\in L_1[0,\alpha)+L_{\infty}[0,\alpha):\, f\prec g\}$ \cite{Ryff}. Clearly the inequality $f\prec g$ is equivalent to 
\[
\|f\|_{M_G}:=\sup_{t>0}\frac{\int_0^t \mu(f)}{\int_0^t \mu(g)}\leq 1.
\]
Setting $G(t)=\int_0^t \mu(g)$, the \textit{Marcinkiewicz} space $M_G$ is the set of all $f\in L^0$ such that $\|f\|_{M_G}<\infty$ \cite{KP, KPS}. The space $M_G$ equipped with the norm $\|\cdot \|_{M_G}$ is a strongly symmetric function space. Therefore the orbit $\Omega(g)$ is the unit ball $B_{M_G}$ in the space $M_G$.
\begin{theorem}
\label{thm:orbits}
Let $g\in L_1[0,\alpha)+L_{\infty}[0,\alpha)$. Then the following are equivalent.
\begin{itemize}
\item[{(i)}] $f$ is an extreme point of $\Omega(g)$.
\item[{(ii)}] $f$ is a $k$-extreme point of $\Omega(g)$.
\item[{(iii)}] $\mu(f)$ is a $k$-extreme point of $\Omega(g)$ and $\abs{f}\geq \mu(\infty,f)$.
\item[{(iv)}] $\mu(f)=\mu(g)$ and $\abs{f}\geq \mu(\infty,f)$.
\end{itemize}
\begin{proof}
Clearly (i) implies (ii), and  (ii) and (iii) are equivalent by Corollary \ref{cor:2}. The implication (iii) to (iv) follows by Corollary \ref{cor:orbitmain}. 
We will show next that $\mu(g)$ is an extreme point of $\Omega(g)$. Consequently if (iv) holds, $\mu(f)=\mu(g)$ is an extreme point of $\Omega(f)$ and by Corollary \ref{cor:2}, (i) follows.

Let $\mu(g)=\frac12 h_1+\frac12 h_2$, where $h_1,h_2\in \Omega(g)$. Then for all $s\in(0,\alpha)$ we have that 
\begin{align*}
\int_0^s\mu(g)&=\frac12\int_0^s h_1 +\frac12\int_0^s h_2\leq \frac12\int_0^s \mu(h_1)+\frac12\int_0^s \mu(h_2)\\
&\leq \frac12\int_0^s\mu(g)+
\frac12\int_0^s\mu(g)=\int_0^s \mu(g).
\end{align*}

Hence $h_1=\mu(h_1)=\mu(g)=\mu(h_2)=h_2$, and $\mu(g)$ is an extreme point of $\Omega(g)$.
\end{proof}
\end{theorem}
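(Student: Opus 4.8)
The plan is to start from the observation already isolated in the text: the orbit $\Omega(g)$ is exactly the unit ball $B_{M_G}$ of the Marcinkiewicz space $M_G$ with $G(t)=\int_0^t\mu(g)$, and $M_G$ is a strongly symmetric function space, so the entire apparatus of Section~2 applies to it. With this in hand I would prove the four conditions equivalent along the cycle $\text{(i)}\Rightarrow\text{(ii)}\Leftrightarrow\text{(iii)}\Rightarrow\text{(iv)}\Rightarrow\text{(i)}$.

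The implication $\text{(i)}\Rightarrow\text{(ii)}$ is immediate, since a $1$-extreme (hence extreme) point of a ball is automatically $k$-extreme for every $k$. For $\text{(ii)}\Leftrightarrow\text{(iii)}$ I would just quote Corollary~\ref{cor:2} with $E=M_G$: $f$ is $k$-extreme in $B_{M_G}$ precisely when $\mu(f)$ is $k$-extreme in $B_{M_G}$ and $\abs f\ge\mu(\infty,f)$. For $\text{(iii)}\Rightarrow\text{(iv)}$, note that $\mu(f)\in\Omega(g)$ means $\mu(f)\prec\mu(g)$ while $\mu(g)\in S_{M_G}$; since $\mu(f)$ is $k$-extreme, Corollary~\ref{cor:orbitmain} forces $\mu(f)=\mu(g)$, and the inequality $\abs f\ge\mu(\infty,f)$ is simply carried over.

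The only substantively new step is $\text{(iv)}\Rightarrow\text{(i)}$, for which the crucial fact is that $\mu(g)$ is itself an extreme point of $\Omega(g)$. To verify this, suppose $\mu(g)=\tfrac12 h_1+\tfrac12 h_2$ with $h_1,h_2\in\Omega(g)$. Using $\int_0^s h_i\le\int_0^s\mu(h_i)\le\int_0^s\mu(g)$ one gets, for every $s>0$,
\[
\int_0^s\mu(g)=\tfrac12\int_0^s h_1+\tfrac12\int_0^s h_2\le\tfrac12\int_0^s\mu(h_1)+\tfrac12\int_0^s\mu(h_2)\le\int_0^s\mu(g),
\]
so all terms coincide for every $s>0$; hence $\int_0^s h_i=\int_0^s\mu(h_i)=\int_0^s\mu(g)$ for all $s$, which forces $\mu(h_i)=\mu(g)$ and, since $\mu(h_i)$ is decreasing, $h_i=\mu(h_i)=\mu(g)$. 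Thus $\mu(g)$ is extreme. Given (iv), $\mu(f)=\mu(g)$ is then an extreme point of $B_{M_G}$ and $\abs f\ge\mu(\infty,f)$, so Corollary~\ref{cor:2} with $k=1$ gives that $f$ is an extreme point of $B_{M_G}=\Omega(g)$, i.e.\ (i) holds.

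I expect the main (and rather minor) obstacle to be bookkeeping around the identification $\Omega(g)=B_{M_G}$ — in particular checking $\|\mu(g)\|_{M_G}=1$ so that $\mu(g)\in S_{M_G}$ — together with the elementary but slightly delicate measure-theoretic step that $\int_0^s h=\int_0^s\mu(h)$ for every $s>0$ forces $h=\mu(h)$ (and, in the complex-valued setting, pins the argument of $h$ as well). All the genuinely hard analysis has already been absorbed into Corollaries~\ref{cor:2} and~\ref{cor:orbitmain}, so no further operator-algebraic input is needed here.
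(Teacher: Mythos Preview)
Your proposal is correct and follows essentially the same route as the paper: the cycle $\text{(i)}\Rightarrow\text{(ii)}\Leftrightarrow\text{(iii)}\Rightarrow\text{(iv)}\Rightarrow\text{(i)}$, with (ii)$\Leftrightarrow$(iii) from Corollary~\ref{cor:2}, (iii)$\Rightarrow$(iv) from Corollary~\ref{cor:orbitmain}, and (iv)$\Rightarrow$(i) by verifying directly that $\mu(g)$ is extreme in $\Omega(g)$ and then invoking Corollary~\ref{cor:2} with $k=1$. The only cosmetic point is that in your extremality argument the conclusion $h_i=\mu(g)$ really comes straight from $\int_0^s h_i=\int_0^s\mu(g)$ for all $s$ (two locally integrable functions with the same indefinite integral agree a.e.), rather than from the fact that $\mu(h_i)$ is decreasing.
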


As an immediate consequence we get the following result.
\begin{corollary}
Let $M_G$ be the Marcinkiewicz space and $k$ be any natural number. The function  $f$ is a $k$-extreme point of $B_{M_G}$ if and only if  $\mu(f)=\mu(g)$ and $|f|\geq \mu(\infty,f)$. Consequently $f$ is a $k$-extreme point of $B_{M_G}$ if and only if $f$ is an extreme point of $B_{M_G}$.
\end{corollary}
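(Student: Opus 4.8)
The plan is to read the corollary off directly from \thmref{thm:orbits}, using the identification of $B_{M_G}$ with the orbit $\Omega(g)$ recorded just before that theorem. First I would recall that, by the very definition of the Marcinkiewicz norm, $\|f\|_{M_G}=\sup_{t>0}\left(\int_0^t\mu(f)\right)\big/\left(\int_0^t\mu(g)\right)$, so that $\|f\|_{M_G}\le 1$ is precisely the condition $f\prec g$. Since $M_G$ is a (strongly) symmetric function space it is contained in $L_1+L_\infty$, and hence $B_{M_G}=\{f\in L^0:\ f\prec g\}=\Omega(g)$. In particular, a function $f$ is a $k$-extreme point of $B_{M_G}$ if and only if it is a $k$-extreme point of the orbit $\Omega(g)$.

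Next I would apply \thmref{thm:orbits} to this $g$: the equivalence of conditions (ii) and (iv) there states exactly that $f$ is a $k$-extreme point of $\Omega(g)$ if and only if $\mu(f)=\mu(g)$ and $\abs{f}\ge\mu(\infty,f)$. Combined with the identification of the previous paragraph, this yields the first claimed equivalence. For the concluding sentence I would invoke the equivalence of (i) and (ii) in \thmref{thm:orbits}: a point of $\Omega(g)=B_{M_G}$ is an extreme point if and only if it is a $k$-extreme point, which is precisely the assertion that the $k$-extreme points and the extreme points of $B_{M_G}$ coincide.

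There is essentially no obstacle here beyond making the identification $B_{M_G}=\Omega(g)$ explicit and noting the harmless non-degeneracy assumption (namely $g\neq 0$, so that $M_G$ is infinite-dimensional and the notion of $k$-extreme point is meaningful for every $k$). The entire content of the corollary is already contained in \thmref{thm:orbits}, together with \lemref{cor:2} via which that theorem was proved; the statement is simply a translation of that result into the language of Marcinkiewicz spaces.
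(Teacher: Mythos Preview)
Your proposal is correct and follows exactly the approach implicit in the paper: the corollary is stated there with no proof beyond the remark ``As an immediate consequence we get the following result,'' since the paper has already recorded (just before \thmref{thm:orbits}) that $\Omega(g)=B_{M_G}$, and the two equivalences you cite from \thmref{thm:orbits} then give both assertions immediately.
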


We conclude the paper with a description of $k$-extreme points for another important class of orbits  $\Omega'(g)$, $0\leq g\in L_1[0,\alpha)$, $\alpha<\infty$ \cite{R, SZ}.

Recall that, given $0\leq g\in L_1[0,\alpha)$, $\alpha<\infty$, the orbit $\Omega'(g)$ is defined as 
\[
\Omega'(g)=\{0\leq f\in L_1[0,\alpha):\,f\prec g\quad \text{and}\quad\|f\|_1=\|g\|_1 \}.
\]

\begin{lemma}
\label{lm:primeorbit}
Let $0\leq g\in L_1[0,\alpha)$, $\alpha<\infty$. Then $f$ is a $k$-extreme point of $\Omega'(g)$ if and only if $f\geq 0$ and $f$ is a $k$-extreme point of $\Omega(g)$.
\end{lemma}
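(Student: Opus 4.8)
The plan is to deduce both implications from one elementary ``face'' property of $\Omega'(g)$ inside $\Omega(g)=B_{M_G}$, together with Theorem~\ref{thm:orbits}. The property I would establish first is: \emph{if $f\in\Omega'(g)$ and $f=\frac{1}{k+1}\sum_{i=1}^{k+1}h_i$ with all $h_i\in\Omega(g)$, then $h_i\in\Omega'(g)$ for every $i$.} Indeed, since $f\geq 0$ we have $\int_0^\alpha f\,dm=\|f\|_1=\|g\|_1$, so that $\sum_{i=1}^{k+1}\int_0^\alpha h_i\,dm=(k+1)\|g\|_1\in\Real$. On the other hand each $h_i$ lies in $L_1[0,\alpha)$ (as $h_i\prec g\in L_1[0,\alpha)$), so its rearrangement $\mu(h_i)$ vanishes on $[\alpha,\infty)$ and $\int_0^\alpha|h_i|\,dm=\int_0^\alpha\mu(h_i)$; hence, using $g\geq 0$ and $h_i\prec g$,
\[
\operatorname{Re}\int_0^\alpha h_i\,dm\;\leq\;\int_0^\alpha|h_i|\,dm\;=\;\int_0^\alpha\mu(h_i)\;\leq\;\int_0^\alpha\mu(g)\;=\;\|g\|_1 .
\]
Summing over $i$ forces equality throughout for every $i$. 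In particular $\operatorname{Re}\int_0^\alpha h_i\,dm=\int_0^\alpha|h_i|\,dm$, which---being equality in $\operatorname{Re}\int h_i\,dm\leq|\int h_i\,dm|\leq\int|h_i|\,dm$---forces $\int_0^\alpha h_i\,dm\geq 0$ and $h_i=|h_i|\geq 0$ a.e.; and $\|h_i\|_1=\int_0^\alpha\mu(h_i)=\|g\|_1$. Together with $h_i\prec g$ this says precisely $h_i\in\Omega'(g)$.

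Granting this, the forward direction is immediate. If $f$ is a $k$-extreme point of $\Omega'(g)$, then $f\in\Omega'(g)$, so $f\geq 0$; and whenever $f=\frac{1}{k+1}\sum_{i=1}^{k+1}h_i$ with $h_i\in\Omega(g)$, the property just proved places all $h_i$ in $\Omega'(g)$, so $k$-extremality of $f$ in $\Omega'(g)$ forces $h_1,\dots,h_{k+1}$ to be linearly dependent. Hence $f$ is a $k$-extreme point of $\Omega(g)$.

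For the converse, suppose $f\geq 0$ and $f$ is a $k$-extreme point of $\Omega(g)=B_{M_G}$. Since $\alpha<\infty$ we have $\mu(\infty,f)=0$, so Theorem~\ref{thm:orbits} (equivalence of its conditions (ii) and (iv)) gives $\mu(f)=\mu(g)$; consequently $\|f\|_1=\int_0^\alpha\mu(f)=\int_0^\alpha\mu(g)=\|g\|_1<\infty$, and since also $f\geq 0$ and $f\prec g$, we get $f\in\Omega'(g)$. Because $\Omega'(g)\subseteq\Omega(g)$, every representation $f=\frac{1}{k+1}\sum_{i=1}^{k+1}f_i$ with $f_i\in\Omega'(g)$ is in particular a representation with $f_i\in\Omega(g)$, hence $f_1,\dots,f_{k+1}$ are linearly dependent. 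Therefore $f$ is a $k$-extreme point of $\Omega'(g)$.

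The only step that needs genuine care is the equality analysis in the displayed chain---deducing $h_i\geq 0$ from equality in $\operatorname{Re}\int h_i\,dm\leq\int|h_i|\,dm$, and justifying $\int_0^\alpha\mu(h_i)=\|h_i\|_1$ from the fact that $\mu(h_i)$ is supported in $[0,\alpha)$. Once Theorem~\ref{thm:orbits} is in hand, the remainder is purely formal.
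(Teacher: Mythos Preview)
Your proof is correct and follows essentially the same approach as the paper's: both directions hinge on the observation that $\Omega'(g)$ is a face of $\Omega(g)$ (any convex decomposition of $f\in\Omega'(g)$ inside $\Omega(g)$ already lies in $\Omega'(g)$), together with Theorem~\ref{thm:orbits} to place $f$ in $\Omega'(g)$ for the converse. Your treatment is in fact slightly more careful than the paper's in handling the complex case, making the passage through $\operatorname{Re}\int h_i$ explicit; the paper writes $f=\frac{1}{k+1}\sum f_i\leq\frac{1}{k+1}\sum|f_i|$ and integrates, which tacitly uses the same real-part argument.
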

\begin{proof}
Suppose that $f\geq 0$ is a $k$-extreme point of  $\Omega(g)$. Then by Theorem \ref{thm:orbits}, $\mu(f)=\mu(g)$ and so $f\in\Omega'(g)$. Consequently, $f$ is a $k$-extreme point of $\Omega'(g)$.

Assume now that $f$ is a $k$-extreme point of $\Omega'(g)$. Let $f=\frac1{k+1}\sum_{i=1}^{k+1}f_i$ where $f_i\in \Omega (g)$. Since $f\geq 0$, $\int_0^\alpha f=\int_0^\alpha g$ and $f_i\prec g$, we have
\begin{align*}
\int_0^\alpha g=\int_0^\alpha f=\int_0^\alpha \mu(f)\leq \frac1{k+1}\sum_{i=1}^{k+1} \int_0^\alpha \mu(f_i)\leq \int_0^\alpha \mu(g)=\int_0^\alpha g.
\end{align*}
Hence, $\int_0^\alpha\mu(f_i)=\int_0^\alpha g$, for $i=1,2,\dots, k+1$.

Since $0\leq f=\frac1{k+1}\sum_{i=1}^{k+1}f_i
\leq\frac1{k+1}\sum_{i=1}^{k+1}\abs{f_i}$ and
 \begin{align*}
 \int_0^\alpha g&=\int_0^\alpha f=\int_0^\alpha\left(\frac1{k+1}
\sum_{i=1}^{k+1}f_i\right)\leq
\int_0^\alpha\left(\frac1{k+1}\sum_{i=1}^{k+1}
\abs{f_i}\right) \\
&=\frac1{k+1}\sum_{i=1}^{k+1}\|f_i\|_1=\int_0^\alpha g,
 \end{align*}
it follows that $f_i=\abs{f_i}$ and $f_i\in \Omega'(g)$, $i=1,2,\dots,k+1$. Therefore $f_1,f_2,\dots, f_{k+1}$ are linearly dependent and $f$ is a $k$-extreme point of $\Omega(g)$.  
\end{proof}

The above lemma and Theorem \ref{thm:orbits} show that the sets of extreme and $k$-extreme points for $\Omega'(g)$, $0\leq g\in L_1[0,\alpha)$, $\alpha<\infty$, coincide. Consequently, the description of extreme points of $\Omega'(g)$ presented in \cite{SZ} applies also for $k$-extreme points. 
\begin{proposition}
\label{thm:orbits1}
 Let $0\leq g\in L_1[0,\alpha)$, $\alpha<\infty$. Then $f$ is a $k$-extreme point of $\Omega'(g)$ if and only if  $\mu(f)$ is a $k$-extreme point of $\Omega'(g)$. Moreover, the set of all $k$-extreme points of $\Omega'(g)$ is given by 
 \[
 \text{$k$-ext}\left(\Omega'(g)\right)=\{0\leq f\in L_1[0,\alpha):\, \mu(f)=\mu(g)\}. 
 \]

\end{proposition}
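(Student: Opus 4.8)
The plan is to read off the statement from Lemma~\ref{lm:primeorbit} and Theorem~\ref{thm:orbits}; the only extra ingredient is the trivial remark that, since $\alpha<\infty$, every $f\in L_1[0,\alpha)$ satisfies $\mu(\infty,f)=0$, so that the side condition $\abs{f}\geq\mu(\infty,f)$ occurring in Theorem~\ref{thm:orbits} (and in Corollary~\ref{cor:2}) is here vacuous and may be dropped throughout.

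First I would settle the ``moreover'' assertion. By Lemma~\ref{lm:primeorbit}, $f$ is a $k$-extreme point of $\Omega'(g)$ precisely when $f\geq 0$ and $f$ is a $k$-extreme point of $\Omega(g)$; and by the equivalence of (ii) and (iv) in Theorem~\ref{thm:orbits} (with the inequality condition now automatic) the latter amounts to $\mu(f)=\mu(g)$. Hence
\[
\text{$k$-ext}\left(\Omega'(g)\right)=\{0\leq f\in L_1[0,\alpha):\, \mu(f)=\mu(g)\}.
\]
I would pause to check the small point that any such $f$ actually lies in $\Omega'(g)$: from $\mu(f)=\mu(g)$ one gets $f\prec g$ and $\|f\|_1=\int_0^\alpha\mu(f)=\int_0^\alpha\mu(g)=\|g\|_1$, and $f\in L_1[0,\alpha)$ because $\mu(f)=\mu(g)\in L_1$.

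Next I would deduce the equivalence between ``$f$ is a $k$-extreme point of $\Omega'(g)$'' and ``$\mu(f)$ is a $k$-extreme point of $\Omega'(g)$'' from the description just obtained. By the preceding paragraph, $f$ is a $k$-extreme point of $\Omega'(g)$ if and only if $f\geq 0$ and $\mu(f)=\mu(g)$, while $\mu(f)$ is a $k$-extreme point of $\Omega'(g)$ if and only if $\mu(f)=\mu(g)$, since $\mu(f)\geq 0$ always and $\mu(\mu(f))=\mu(f)$. Thus, if $f$ is a $k$-extreme point of $\Omega'(g)$ then $\mu(f)=\mu(g)$, so $\mu(f)$ is one as well; conversely, if $\mu(f)$ is a $k$-extreme point of $\Omega'(g)$ then $\mu(f)=\mu(g)$, and since $f$, being a candidate point of $\Omega'(g)$, is nonnegative, $f$ is then a $k$-extreme point of $\Omega'(g)$.

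I do not expect a genuine obstacle: the proof is merely an assembly of the orbit results of Section~2 with Lemma~\ref{lm:primeorbit}. The only places calling for a little care are the observation that $\mu(\infty,f)=0$ for all functions involved, which makes the auxiliary inequality condition disappear, and the membership bookkeeping needed so that ``$k$-extreme point of $\Omega'(g)$'' is meaningful on both sides of the equivalence (in particular the tacit nonnegativity of $f$). Neither amounts to more than a one-line verification.
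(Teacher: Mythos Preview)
Your proposal is correct and follows the same route the paper indicates: combine Lemma~\ref{lm:primeorbit} with Theorem~\ref{thm:orbits}, noting that $\alpha<\infty$ forces $\mu(\infty,f)=0$ so the side condition $\abs{f}\geq\mu(\infty,f)$ becomes vacuous. The paper itself does not write out a proof but simply remarks before the proposition that these two results make the $k$-extreme points coincide with the extreme points, and then invokes the known description of extreme points of $\Omega'(g)$ from \cite{SZ}; your argument is more self-contained in that it derives the characterization $\mu(f)=\mu(g)$ directly from Theorem~\ref{thm:orbits} rather than citing \cite{SZ}, but the substance is the same.
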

\bibliographystyle{amsplain}

\end{document}